\numberwithin{equation}{section}
\newtheorem{thm}{Theorem}[section]
\newtheorem{lem}[thm]{Lemma}
\newtheorem{prop}[thm]{Proposition}
\newtheorem{rem}[thm]{Remark}
\def\E{\mathrm{E}}
\def\Var{\mathrm{Var}}
\def\d{\mathrm{d}}
\def\Cov{\mathrm{Cov}}
\def\TV{\mathrm{TV}}
\def\N{\mathrm{N}}
\def\P{\mathrm{P}}
\def\R{\mathbb{R}}
\def\e{\varepsilon}
\def\a{\alpha}
\def\u{\widetilde{u}}
\def\G{\widetilde{G}}
\def\de{\delta}
\begin{document}

\title{\bf{\textmd{Convergence of densities of spatial averages of the linear stochastic heat equation}}}

\author{\small Wanying Zhang}
\author{Yong Zhang\thanks{The corresponding address: zyong2661@jlu.edu.cn}}
\author{Jingyu Li}
\affil{School of Mathematics, Jilin University, Changchun 130012, China}

\date{}

\maketitle

\begin{abstract}
{ Let $\{u(t,x)\}_{t>0,x\in{{\mathbb R}^{d}}}$ denote the solution to the linear (fractional) stochastic heat equation. We establish rates of convergence with respect to the uniform distance between the density of spatial averages of solution and the density of the standard normal distribution in some different scenarios. We first consider the case that $u_0\equiv1$, and the stochastic fractional heat equation is driven by a space-time white noise. When $\a=2$ (parabolic Anderson model, PAM for short) and the stochastic heat equation is driven by colored noise in space, we present the rates of convergence respectively in the case that $u_0\equiv1$, $d\geq1$ and $u_0=\de_0$, $d=1$ under an additional condition $\hat f(\R^d)<\infty$. Our results are obtained by using a combination of the Malliavin calculus and Stein's method for normal approximations.\\}

\noindent{\bf{Keywords}}
stochastic fractional heat equation, parabolic Anderson model, Malliavin calculus, Stein's method.\\

%\noindent{\bf{Mathematics Subject Classification}} 60F15; 62G07; 62G08
\end{abstract}

\par
\section{Introduction}  %注入一级标题示例,大括号内替换文章的一级标题
Consider the following stochastic (fractional) heat equation:
{\begin{equation}\label{1.1}
\left\{
\begin{array}{lr}
{\partial _t}{u(t,x)}={v\cdot(-(-\Delta))^{\frac{\a}{2}} u(t,x)}+{u(t,x)}{\eta (t,x)}~~~{\rm for} ~{(t,x)} \in {(0,+\infty)}{\times{\mathbb R}^{d}},\\
{\rm subject~to}~~~~~{u(0,x)=u_0(x)},\\
\end{array}
\right.
\end{equation}
where $v$ is a positive constant, $-(-\Delta)^{\frac{\a}{2}}$ denotes the fractional Laplace operator (see \cite{G19}) and $\eta$ denotes a centered, generalized Gaussian random field such that
\begin{align}\label{1.2}
{\rm Cov}[\eta(t,x),\eta(s,y)]=\delta_{0}(t-s)f(x-y)~~~{\rm for~all}~ s,t\geq0 ~{\rm and} ~x,y \in {\mathbb R}^{d},
\end{align}
for a non-zero, nonnegative-definite, tempered Borel measure $f$ on ${\mathbb R}^{d}$.

We are interested in the rates of convergence of the uniform norm of densities in the following three cases:\\
{\bf Case} 1 $(d=1)$: $\a\in(1,2]$, $v=1$, $u_0\equiv1$ and $f(x)=\de_0(x)$ for all $x\in\R$.\\
{\bf Case} 2 $(d\geq1)$: $\a=2$, $v=\frac{1}{2}$, $u_0\equiv1$, $f(\R^d)<\infty$ and ${\hat f}(\R^d)<\infty$.\\%\footnote{The spectral measure $\hat f(dy)$ is defined through the Fourier transform of the measure $f$, that is, $\hat f(dy)=\int_{\R^d}e^{ix \cdot y}f(dx)$.}<\infty$.\\
{\bf Case} 3 $(d=1)$: $\a=2$, $v=\frac{1}{2}$, $u_0=\delta_0$, $f(\R)<\infty$ and ${\hat f}(\R)<\infty$.

Following from Walsh \cite{Walsh86}, we can interpret (\ref{1.1}) in the following mild form:\\
In case 1,
\begin{align}\label{1.3}
u(t,x)=1+\int_{(0,t)\times{\mathbb R}}G_\a(t-s,x-y)u(s,y)\eta({\rm d}s,{\rm d}y)~~~{\rm for}~t>0~{\rm and}~x\in {\mathbb R},
\end{align}
where $G_\a$ denotes the Green kernel defined through its Fourier transform $\widehat{G_\a(t,\cdot)}(x)=e^{-t\vert x\vert^\a}$.\\
In case 2,
\begin{align}\label{1.4}
u(t,x)=1+\int_{(0,t)\times{\mathbb R}^{d}}{p}_{t-s}(x-y)u(s,y)\eta({\rm d}s,{\rm d}y)~~~{\rm for}~t>0~{\rm and}~x\in {\mathbb R}^{d},
\end{align}
where $p_t(x)$ denotes the heat kernel that satisfies $p_t(x)=\frac{1}{\sqrt{2\pi t}}e^{-\frac{\Vert x\Vert^2}{2t}}$.\\
In case 3,
\begin{equation}\label{1.5}
U(t,x)=1+\int_{(0,t)\times{\mathbb R}}{p}_{s(t-s)/t}\left(y-\frac{s}{t}x\right)U(s,y)\eta({\rm d}s,{\rm d}y)~~~{\rm for}~t>0~{\rm and}~x\in {\mathbb R},
\end{equation}
where $U(t,x):=u(t,x)/p_t(x)$ (see \cite{Davar21}).

The existence and uniqueness problems for the solution to (\ref{1.1}) have been studied extensively \cite{Ass22,Chen15,Chen19,Dalang99,Davar21}. In the present setting, we must ensure that the Fourier transform $\hat f$ satisfies the integrability condition:
\begin{equation}\label{1.6}
\Upsilon(\beta):=\frac{1}{({2\pi})^{d}}\int_{{\mathbb R}^{d}}\frac{{\hat f({\rm d}y)}}{\beta+{\Vert y \Vert}^{\a}}<\infty~~~{\rm for~all}~\beta>0.
\end{equation}
Clearly, (\ref{1.6}) holds in the cases mentioned above.

For any fixed $t>0$ and $N>0$, we introduce the centered and normalized spatial averages:\\
In case 1,
\begin{equation}\label{1.7}
F_{N,1}:=\frac{1}{\sigma_{N,1}}\left(\int_{[0,N]}\left(u(t,x)-1\right)\d x\right),~{\rm where}~{\sigma^{2}_{N,1}}:=\Var\left(\int_{[0,N]} u(t,x)\d x\right).
\end{equation}
In case 2,
\begin{equation}\label{1.8}
F_{N,2}:=\frac{1}{\sigma_{N,2}}\left(\int_{[0,N]^d}\left(u(t,x)-1\right)\d x\right),~{\rm where}~{\sigma^2_{N,2}}:=\Var\left(\int_{[0,N]^d} u(t,x)\d x\right).
\end{equation}
In case 3,
\begin{equation}\label{1.9}
F_{N,3}:=\frac{1}{\sigma_{N,3}}\left(\int_{[0,N]}\left(U(t,x)-1\right)\d x\right),~{\rm where}~{\sigma^2_{N,3}}:=\Var\left(\int_{[0,N]} U(t,x)\d x\right).
\end{equation}

There are many arguments on quantitative central limit theorem (CLT for short) for spatial averages of the solution to (\ref{1.1}). CLT using techniques of Malliavin-Stein's method was first derived by Huang et al \cite{Huang20} with $\a=2$, $u_0\equiv1$ and $f=\delta_0$. Later, Chen et al deduced the case that $d\geq1$ in \cite{Chen22} and \cite{Chen23} under the condition $f(\R^d)<\infty$. As for delta initial condition, Chen et al \cite{Chen222} proved the CLT for the PAM when $\eta$ is a space-time white noise. After that, Khoshnevisan et al \cite{Davar21} extended the result to cover the condition that $\eta$ is colored in space. Recently, Assaad et al \cite{Ass22} studied the case of stochastic fractional heat equation with initial condition $u_0\equiv1$. Other related limit theorems and its variations can be found in \cite{Huang220,{Hu19},Li21,Li23,Nualart21, Nualart22,Zhang22}. In the present setting, we can express the convergence rates for the total variation distance\footnote{The total variation distance between two random variables $X$ and $Y$ is defined as $d_{\TV}(X,Y)=\sup_{B\in {\mathcal B}(\R)}\vert {\rm P}(X\in B)-{\rm P}(Y\in B)\vert$, where $\mathcal{B}(\R)$ is the collection of all Borel sets in $\R$.} precisely as the following:
\begin{align}
d_{\TV}\left(F_{N,1}, {\N}(0,1)\right)\leq\frac{C_t}{\sqrt N}~~~{\rm for~all}~N\geq1,\label{1.10}
\end{align}
\begin{align}
d_{\rm TV}\left(F_{N,2}, {\rm N}(0,1)\right)\leq\frac{C_t}{(\sqrt N)^d}~~~{\rm for~all}~N\geq 1,\label{1.11}
\end{align}
and
\begin{equation}\label{1.12}
d_{\rm TV}\left(F_{N,3}, {\rm N}(0,1)\right)\leq\frac{C_t\sqrt {\log N}}{\sqrt N}~~~{\rm for~all}~N\geq e.
\end{equation}

The above results describe the convergence rates for total variation distance of distributions. In this paper, we obtain the convergence rates for the uniform distance of densities in the mentioned three cases. Analyzing upper bounds for the uniform norm of densities by using Malliavin calculus techniques was first introduced by Hu et al \cite{Hu14}. Recently, Kuzgun and Nualart \cite{Ku22} established the upper bounds for uniform distance of densities between a given random variable $F$ and the standard normal random variable. Moreover, when $f=\de_0$, they deduced the convergence rates with respect to the nonlinear stochastic heat equation and the PAM under the condition $u_0\equiv1$ and $u_0=\de_0$, respectively.
Inspired by these ideas, we obtain the following results:
\begin{thm}\label{thm1.1}
In case 1, let $F_{N,1}$ be the spatial average defined in (\ref{1.7}). Then, for all $N\geq1$,
\begin{equation}\label{1.14}
\sup_{x\in\R}\vert f_{F_{N,1}}(x)-\phi(x)\vert\leq\frac{C_t}{\sqrt N},
\end{equation}
where $f_{F_{N,1}}$ and $\phi$ denote the densities of ${F_{N,1}}$ and ${\rm N}(0,1)$, respectively.
\end{thm}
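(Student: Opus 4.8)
The plan is to derive Theorem~\ref{thm1.1} from the general density–comparison inequality of Kuzgun and Nualart \cite{Ku22}. The underlying Hilbert space for the Malliavin calculus is $\mathcal{H}=L^2([0,t]\times\R)$, and \cite{Ku22} provides, for a centered $F\in\mathbb{D}^{2,p}$ with $\E[F^2]=1$ and $\E[\|DF\|_{\mathcal{H}}^{-q}]<\infty$ for suitable exponents, a bound whose leading term is $\bigl\|1-\langle DF,-DL^{-1}F\rangle_{\mathcal{H}}\bigr\|_{L^{p}}$ — precisely the quantity governing the total-variation estimate $(\ref{1.10})$ — together with a remainder $\mathcal{R}(F)$ built, schematically, from $L^{p}$-norms of negative powers of $\|DF\|_{\mathcal{H}}$ times contractions of $D^2F$ against $DF$ and $DL^{-1}F$ (in effect, times $\|D\langle DF,-DL^{-1}F\rangle_{\mathcal{H}}\|_{\mathcal{H}}$). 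It therefore suffices to show that, for $F=F_{N,1}$, both the leading term and $\mathcal{R}(F_{N,1})$ are $O(N^{-1/2})$.

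First I would record the Malliavin structure and the moment bounds needed. From the mild form $(\ref{1.3})$, $u(t,x)\in\bigcap_{p\ge1}\mathbb{D}^{2,p}$ and
\[
D_{s,y}u(t,x)=G_\a(t-s,x-y)u(s,y)+\int_{(s,t)\times\R}G_\a(t-r,x-z)\,D_{s,y}u(r,z)\,\eta(\d r,\d z),
\]
with an analogous linear equation for $D^2u$. Since $\widehat{G_\a(t,\cdot)}(\xi)=e^{-t|\xi|^\a}$ and $\a>1$, one has $\int_0^t\int_\R G_\a(s,x)^2\,\d x\,\d s<\infty$, so Picard iteration together with the Burkholder and Gronwall inequalities yields $\sup_x\E|u(t,x)|^p<\infty$, $\E|D_{s,y}u(t,x)|^p\le C_t\,G_\a(t-s,x-y)^p$, and $\E|D^2_{(s,y),(s',y')}u(t,x)|^p\le C_t\bigl(G_\a(t-s,x-y)G_\a(t-s',x-y')+G_\a(t-s,x-y')G_\a(t-s',x-y)\bigr)^p$; these are standard estimates of the kind used in \cite{Ass22} to obtain $(\ref{1.10})$. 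I would also use that $\sigma_{N,1}^2=\int_{[0,N]^2}\Cov(u(t,x),u(t,y))\,\d x\,\d y$ with a covariance that is nonnegative and integrable in $x-y$ — its first-chaos part alone equals $\frac12\int_0^{2t}G_\a(r,x-y)\,\d r$, which is integrable with strictly positive integral — so that $c_1N\le\sigma_{N,1}^2\le c_2N$ for all $N\ge1$.

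Next, writing $G_N:=\int_{[0,N]}(u(t,x)-1)\,\d x=\sigma_{N,1}F_{N,1}$ and using that $\langle DF_{N,1},-DL^{-1}F_{N,1}\rangle_{\mathcal{H}}=\sigma_{N,1}^{-2}\langle DG_N,-DL^{-1}G_N\rangle_{\mathcal{H}}$ has expectation $1$, the leading term equals a constant times $\sigma_{N,1}^{-2}\,\Var\bigl(\langle DG_N,-DL^{-1}G_N\rangle_{\mathcal{H}}\bigr)^{1/2}$, and the kernel estimates above bound this variance by $C_tN$ exactly as in the derivation of $(\ref{1.10})$, so the leading term is $O(N^{-1/2})$. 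The terms in $\mathcal{R}(F_{N,1})$ are handled by the same type of computation, now invoking the explicit equation for $D^2u$: although $\|D^2F_{N,1}\|_{\mathcal{H}^{\otimes2}}$ on its own is only $O(1)$ in $L^p$, it enters $\mathcal{R}$ solely through contractions with $DF_{N,1}$ and $DL^{-1}F_{N,1}$ that reproduce the ``one extra Green kernel, total mass still $O(N)$'' mechanism of the variance bound — equivalently, through the Malliavin derivative of the almost-constant $\langle DF_{N,1},-DL^{-1}F_{N,1}\rangle_{\mathcal{H}}$ — so, provided the negative-moment factors are $O(1)$, these terms are $O(N^{-1/2})$ as well.

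The step I expect to be the main obstacle is the non-degeneracy bound $\sup_{N\ge1}\E\bigl[\|DF_{N,1}\|_{\mathcal{H}}^{-q}\bigr]<\infty$ required by \cite{Ku22}. For $s$ close to $t$ and $y\in[0,N]$ the leading contribution to $D_{s,y}G_N=\int_{[0,N]}D_{s,y}u(t,x)\,\d x$ is $u(s,y)$, since $\int_\R G_\a(t-s,x-y)\,\d x=1$; hence $\|DG_N\|_{\mathcal{H}}^2\ge\int_{t-\de}^t\int_{[0,N]}u(s,y)^2\,\d y\,\d s-R_{N,\de}$ with a remainder $R_{N,\de}$ controlled in every $L^p$ by the kernel bounds, and since $\E[u(s,y)^2]\ge1$ this lower bound has mean of order $\de N\asymp\de\,\sigma_{N,1}^2$, so $\|DF_{N,1}\|_{\mathcal{H}}^2\gtrsim\de$ with high probability. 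Upgrading this into a bound on negative moments that is \emph{uniform in $N$} is the delicate part: I would partition $[0,N]$ into unit blocks, approximate $u(s,y)$ on each block (for $s\in(t-\de,t)$) by the solution driven only by the noise in a bounded space--time neighbourhood of that block — these approximants being independent for well-separated blocks — and then apply a concentration inequality for a sum of $\asymp N$ weakly dependent nonnegative variables with strictly positive mean to obtain the required small-ball estimate for $\int_{t-\de}^t\int_{[0,N]}u(s,y)^2\,\d y\,\d s$. The fractional kernel $G_\a$ intervenes here only through its $L^2$ norm and its polynomial tails, both under control for $\a\in(1,2]$. Feeding the three ingredients into the Kuzgun--Nualart inequality then gives $\sup_{x\in\R}|f_{F_{N,1}}(x)-\phi(x)|\le C_t/\sqrt N$ for all $N\ge1$, which is $(\ref{1.14})$.
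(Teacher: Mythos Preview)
Your overall plan is the right one, but at each of the three steps the paper takes a different and more direct route, and the nondegeneracy step in your sketch is a genuine gap.

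The version of the Kuzgun--Nualart inequality actually used (Lemma~\ref{lem2.1}, i.e.\ \cite[Theorem~3.2]{Ku22}) is stated not via $\langle DF,-DL^{-1}F\rangle_{\mathcal H}$ and $\|DF\|_{\mathcal H}^{-q}$ but via the scalar $D_vF:=\langle DF,v\rangle_{\mathcal H}$, where $F=\delta(v)$ for the explicit predictable kernel $v_{N,1}(s,y)=\sigma_{N,1}^{-1}\mathbf 1_{[0,t]}(s)\,u(s,y)\int_{[0,N]}G_\a(t-s,x-y)\,\d x$; the bound is $(\|F\|_4\|(D_vF)^{-1}\|_4+2)\|1-D_vF\|_2+\|(D_vF)^{-1}\|_4^2\|D_v(D_vF)\|_2$, and $L^{-1}$ never appears. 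Your pointwise estimate $\|D_{s,y}u(t,x)\|_p\le C_tG_\a(t-s,x-y)$ is also stronger than what is available for $\a<2$: since $G_\a$ satisfies only $G_\a^2(t,x)\le Ct^{-1/\a}G_\a(t,x)$ (Lemma~\ref{lem6.1}(1)) and has no identity like $p_t^2=c_tp_{t/2}$, the Picard/Gronwall scheme yields only $\|D_{s,y}u(t,x)\|_p\le C(t-s)^{-1/(2\a)}G_\a^{1/2}(t-s,x-y)$ and the corresponding product bound for $D^2u$ (Lemma~\ref{lem2.2}(2), Proposition~\ref{prop3.1}). These $G_\a^{1/2}$ bounds are what the paper inserts into the explicit expression (\ref{5.2}) for $D_{v_{N,1}}(D_{v_{N,1}}F_{N,1})$; integrating out variables with Lemma~\ref{lem6.1}(3)--(4) gives $\|D_{v_{N,1}}(D_{v_{N,1}}F_{N,1})\|_2\le C_t/\sqrt N$ directly, without any appeal to ``$D$ of an almost-constant quantity''.

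The real gap is the nondegeneracy step. The paper never bounds $\E\|DF_{N,1}\|_{\mathcal H}^{-q}$ and uses no block decomposition, localisation of the noise, or concentration inequality. Instead (Proposition~\ref{prop4.1}) one writes out the nonnegative $D_{v_{N,1}}F_{N,1}$, restricts the $s$-integral to a short window $(t-\e,t)$, and uses the linear equation (\ref{2.5}) to split the result as $I_1+I_2$, where $I_1=\sigma_{N,1}^{-2}\int_{t-\e}^t\int_\R\phi_N^2(s,y)\,u(s,y)^2\,\d y\,\d s$ with $\phi_N(s,y)=\int_{[0,N]}G_\a(t-s,x-y)\,\d x$, and $I_2$ is a stochastic integral. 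For $I_1$ one applies Jensen's inequality with respect to the probability measure proportional to $\phi_N^2(s,y)\,\d y\,\d s$, together with the \emph{pointwise} bound $\sup_{s\in[t/2,t]}\E[u(s,0)^{-2q}]<\infty$ from \cite{Chen21} and a Fourier computation showing $\sigma_{N,1}^{-2}\int_\R\phi_N^2(s,y)\,\d y\ge C_t$ uniformly in $N$; this yields $\E[I_1^{-q}]\le C_{t,q}$ times a negative power of $\e$. For $I_2$, BDG and the $G_\a^{1/2}$ bound give $\E[|I_2|^q]\le C_{t,q}$ times a strictly larger positive power of $\e$. Chebyshev and optimisation in $\e$ then give $\sup_{N\ge1}\E[(D_{v_{N,1}}F_{N,1})^{-p}]<\infty$. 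Thus the only probabilistic input is the known small-ball estimate for a \emph{single} $u(s,y)$; your independence/concentration scheme for $\int_{[0,N]}u^2$ is unnecessary and, as you acknowledge, would be the hardest part to make rigorous.
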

One can see that (\ref{1.1}) becomes the PAM when $\a=2$. Hence, Theorem \ref{thm1.1} is an extension of the linear case in \cite[Theorem 1.1]{Ku22}. However, because the Green kernel $G_\a(x)$ is defined by a Fourier transform, technical computations are much more complex in proving (\ref{1.14}). The preceding theorem is given for $f=\de_0$. In the following, we concentrate on the case that $\eta$ is colored in space.

\begin{thm}\label{thm1.2}
In case 2, let $F_{N,2}$ be the spatial average defined in (\ref{1.8}). Then, for all $N\geq1$,
\begin{equation}\label{1.15}
\sup_{x\in\R}\vert f_{F_{N,2}}(x)-\phi(x)\vert\leq\frac{C_t}{(\sqrt N)^d},
\end{equation}
where $f_{F_{N,2}}$ and $\phi$ denote the densities of $F_{N,2}$ and ${\rm N}(0,1)$, respectively.
\end{thm}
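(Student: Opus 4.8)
The plan is to follow the density-comparison machinery of Kuzgun--Nualart~\cite{Ku22}, combined with the Malliavin calculus estimates that already underpin the total-variation bound~\eqref{1.11}. The key analytic tool is a formula of the form
\begin{equation*}
\sup_{x\in\R}\vert f_{F}(x)-\phi(x)\vert \leq C\Bigl(\sqrt{\E\bigl[(1-\langle DF,-DL^{-1}F\rangle_{\mathfrak H})^2\bigr]}\,+\,\E\bigl[\Vert DF\Vert_{\mathfrak H}^{-2}\Vert D F\Vert_{\mathfrak H}\cdot(\text{something small})\bigr]\Bigr),
\end{equation*}
more precisely the estimate from~\cite[Ku22]{Ku22} bounding the uniform distance of densities in terms of (i) the $L^2$-distance of $\langle DF,-DL^{-1}F\rangle_{\mathfrak H}$ from $1$, (ii) negative moments of $\Vert DF\Vert_{\mathfrak H}^2$, and (iii) a third-moment/Malliavin-derivative term controlling $\Vert D^2F\Vert$. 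So the first step is to recall that apparatus and reduce~\eqref{1.15} to establishing, for $F=F_{N,2}$, the three ingredients: a bound $\E[(1-\langle DF,-DL^{-1}F\rangle)^2]^{1/2}\leq C_t N^{-d/2}$; a uniform-in-$N$ bound on $\E[\Vert DF_{N,2}\Vert_{\mathfrak H}^{-p}]$ for the relevant $p$; and a bound of order $N^{-d/2}$ on the mixed Malliavin-norm term.

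Second, I would import the covariance and Malliavin-derivative structure for the PAM with spatially colored noise that is already developed in~\cite{Chen22,Chen23} and used to prove~\eqref{1.11}. Writing $u(t,x)=1+\delta(G_{t}(x-\cdot)u(\cdot))$ in Skorokhod form, one has the explicit chaos-type expansion and the derivative identity $D_{s,y}u(t,x)=p_{t-s}(x-y)u(s,y)+\int_s^t\!\!\int p_{t-r}(x-z)D_{s,y}u(r,z)\,\eta(\d r,\d z)$, which yields the pointwise bound $\E[(D_{s,y}u(t,x))^2]\leq C\,p_{t-s}(x-y)^2$ after Gronwall-type iteration against the kernel $\int_{\R^d}\int_{\R^d}p_{r}(z)p_{r}(z')f(z-z')\,\d z\,\d z'$, finite because $f(\R^d)<\infty$. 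The variance asymptotics $\sigma_{N,2}^2\asymp C_t N^d$ (so that $\sigma_{N,2}^{-1}\asymp N^{-d/2}$) are exactly what underlies~\eqref{1.11} and I would quote them. These give the first and third ingredients above by the same computations as in the total-variation proof, since the ``$1-\langle DF,-DL^{-1}F\rangle$'' term and the third-derivative term are the standard Malliavin--Stein quantities already controlled there.

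Third --- and this is the step genuinely new relative to~\eqref{1.11} --- I must control the negative moments $\E[\Vert DF_{N,2}\Vert_{\mathfrak H}^{-p}]$ uniformly in $N$. The strategy, as in~\cite{Hu14,Ku22}, is to bound $\Vert DF_{N,2}\Vert_{\mathfrak H}$ from below by isolating the ``first-chaos'' contribution: $\Vert DF_{N,2}\Vert_{\mathfrak H}^2 \geq$ (a nonnegative remainder) $+\ \sigma_{N,2}^{-2}\bigl\Vert\int_{[0,N]^d}p_{t-\cdot}(x-\cdot)\,\d x\bigr\Vert_{\mathfrak H}^2$, and to observe that the deterministic leading term is bounded below by a positive constant uniformly in $N$ after normalization, because both $\Vert\cdot\Vert_{\mathfrak H}^2$ of the kernel and $\sigma_{N,2}^2$ scale like $N^d$. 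One then needs a small-ball / exponential-integrability estimate showing $\Pr(\Vert DF_{N,2}\Vert_{\mathfrak H}^2<\varepsilon)$ decays fast enough for every negative moment to be finite and bounded in $N$; this is where the additional hypothesis $\hat f(\R^d)<\infty$ is used, exactly as~\eqref{1.6} with $\alpha=2$ guarantees the relevant Malliavin-Sobolev norms are finite. I expect \textbf{this negative-moment bound, uniform in $N$, to be the main obstacle}: the pointwise-in-$N$ finiteness is essentially~\cite{Ku22}, but propagating it with a constant independent of $N$ requires the scaling match between $\sigma_{N,2}^2$ and the kernel norm, and careful use of $f(\R^d)<\infty$ and $\hat f(\R^d)<\infty$ to keep the higher Sobolev norms $\Vert F_{N,2}\Vert_{\mathbb D^{k,p}}$ bounded uniformly. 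Once all three ingredients carry constants independent of $N$, plugging into the density-comparison inequality gives~\eqref{1.15} with the stated rate $C_t(\sqrt N)^{-d}$, completing the proof.
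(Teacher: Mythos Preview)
Your overall strategy---invoke the Kuzgun--Nualart density-comparison inequality and control its ingredients---is correct, but two substantive points need repair.

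First, you have the wrong quantities in the density bound. The relevant inequality (Lemma~\ref{lem2.1}, i.e.\ \cite[Theorem~3.2]{Ku22}) is stated in terms of $D_vF:=\langle DF,v\rangle_{\mathcal H}$, where $v=v_{N,2}$ is the specific predictable integrand for which $F_{N,2}=\delta(v_{N,2})$; the three controlled terms are $\|1-D_vF\|_2$, $\|(D_vF)^{-1}\|_4$, and $\|D_v(D_vF)\|_2$. Neither $\langle DF,-DL^{-1}F\rangle$ nor $\|DF\|_{\mathcal H}$ appears. The first quantity is indeed imported from~\cite{Chen23}, and the third is handled by a direct second-Malliavin-derivative estimate using Lemma~\ref{lem2.3}(2)--(3), not by recycling the total-variation computations.

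Second, and more seriously, your plan for the negative moment does not work as written. The leading part of $D_{v_{N,2}}F_{N,2}$ is \emph{not} the deterministic quantity $\sigma_{N,2}^{-2}\bigl\|\int_{[0,N]^d}p_{t-\cdot}(x-\cdot)\,\d x\bigr\|_{\mathcal H}^2$; from \eqref{4.6.5}--\eqref{4.7} it equals
\[
I_1=\frac{1}{\sigma_{N,2}^2}\int_{t_\alpha}^t\!\int_{\R^{2d}}\!\int_{[0,N]^{2d}}p_{t-s}(x_1-y)\,p_{t-s}(x_2-y-y')\,u(s,y)\,u(s,y+y')\,\d x_1\,\d x_2\,\d y\,f(\d y')\,\d s,
\]
which is random. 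There is no ``nonnegative remainder'' decomposition isolating a deterministic lower bound. The paper instead exploits the positivity $u\geq0$, $Du\geq0$ to write $D_vF\geq I_1+I_2$ with $I_1\geq0$ as above and $I_2$ a small stochastic integral over $[t_\alpha,t]$, bounds $\P(D_vF<\varepsilon)\leq\P(I_1<2\varepsilon)+\P(|I_2|>\varepsilon)$, and controls $\E[I_1^{-q}]$ via Jensen's inequality together with \emph{negative moments of the solution}, $\sup_{s\in[t/2,t]}\E[u(s,0)^{-2q}]<\infty$. This last input is precisely where the hypothesis $\hat f(\R^d)<\infty$ is used: it is required for the Chen--Huang small-ball estimate~\cite[Theorem~1.6]{Chen19} that yields $\E[u(t,x)^{-p}]<\infty$ (Lemma~\ref{lem2.3}(4)). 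It is not a condition on Malliavin--Sobolev norms of $F_{N,2}$.
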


\begin{thm}\label{thm1.3}
In case 3, let $F_{N,3}$ be the spatial average defined in (\ref{1.9}). Fix $\beta>21$. Then, for all $N\geq e$,
\begin{equation}\label{1.15}
\sup_{x\in\R}\vert f_{F_{N,3}}(x)-\phi(x)\vert\leq\frac{C_t(\sqrt {\log N})^{\beta}}{\sqrt N},
\end{equation}
where $f_{F_{N,3}}$ and $\phi$ denote the densities of $F_{N,3}$ and ${\rm N}(0,1)$, respectively.
\end{thm}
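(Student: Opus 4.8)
The plan is to follow the Malliavin–Stein density comparison machinery of Kuzgun–Nualart \cite{Ku22}, adapting it to the delta initial condition in the colored-noise setting of Khoshnevisan et al \cite{Davar21}. The starting point is a general density bound: for a random variable $F$ in the Malliavin space $\mathbb{D}^{2,p}$ with $\E[F]=0$, $\Var(F)=1$ and a suitable lower bound on $\Vert DF\Vert$, one controls $\sup_x|f_F(x)-\phi(x)|$ by a combination of moments of $1-\langle DF,-DL^{-1}F\rangle_{\mathfrak H}$, of $\Vert D^2F\Vert$, and of negative moments of the Malliavin norm of $F$. So the first step is to record this abstract inequality, then to verify each ingredient for $F=F_{N,3}$.

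Second, I would compute the Malliavin derivative of $U(t,x)$ using the mild formulation \eqref{1.5}: $D_{s,y}U(t,x) = p_{s(t-s)/t}(y-\frac{s}{t}x)U(s,y) + \int_{(s,t)\times\R} p_{r(t-r)/t}(z-\frac{r}{t}x)D_{s,y}U(r,z)\,\eta(\d r,\d z)$, giving rise to the usual Picard/second-chaos expansion. The key analytic inputs, all already used to establish \eqref{1.12}, are: the $L^p(\Omega)$ bounds on $U(t,x)$ and on its first and second Malliavin derivatives, uniform in $x$ over compact sets but with the polynomial-in-$x$ growth inherent to the delta initial datum; the variance asymptotics $\sigma_{N,3}^2 \asymp N\log N$ (hence the $\sqrt{\log N}$ in \eqref{1.12}); and the quantitative estimate on $\E|1-\langle DF_{N,3},-DL^{-1}F_{N,3}\rangle|^2$, which carries the factor $\frac{\log N}{N}$. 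The novelty over the total-variation bound \eqref{1.12} is that the density bound additionally requires control of $\E[\Vert DF_{N,3}\Vert_{\mathfrak H}^{-q}]$ for some $q>0$; here is where the condition $\hat f(\R)<\infty$ and the extra powers of $\sqrt{\log N}$ enter, since one typically estimates the negative moment by splitting $\Vert DF_{N,3}\Vert^2$ into a dominant term of order $1$ plus a remainder, and the cost of making this rigorous — via a small-ball estimate or via the $\mathbb{D}^{2,p}$-norm divided into the first chaos piece and a higher-order piece — produces logarithmic losses that accumulate into the exponent $\beta$.

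Third, assembling the pieces: writing $F_{N,3} = \frac{1}{\sigma_{N,3}}\int_{[0,N]}(U(t,x)-1)\,\d x$, one has $DF_{N,3} = \frac{1}{\sigma_{N,3}}\int_{[0,N]} DU(t,x)\,\d x$, and one shows $\Vert D^2 F_{N,3}\Vert_{L^p} \lesssim \frac{1}{\sigma_{N,3}}\cdot N^{1/2} \lesssim (\log N)^{-1/2}$ roughly, while the inner-product deviation term is $\lesssim \sqrt{\log N/N}$; plugging into the abstract inequality and tracking how the negative-moment factor $\E[\Vert DF_{N,3}\Vert^{-q}]^{1/q}$ inflates with $\log N$, one arrives at a bound of the form $C_t (\log N)^{(\beta-1)/2}/\sqrt N = C_t(\sqrt{\log N})^{\beta}/\sqrt N$ for $\beta$ large enough; the hypothesis $\beta>21$ is simply the explicit threshold at which all the accumulated logarithmic factors, including those from the $L^p$-norm estimates with $p$ chosen large, are dominated.

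The main obstacle will be the lower bound on the Malliavin derivative, i.e. controlling $\E\big[\Vert DF_{N,3}\Vert_{\mathfrak H}^{-q}\big]$. Unlike the flat initial condition, here $U(t,x)$ degenerates as $|x|\to\infty$ (the factor $p_{s(t-s)/t}(y-\frac{s}{t}x)$ concentrates and the effective noise felt by $U(t,x)$ over $[0,N]$ is spatially inhomogeneous), so one cannot simply bound $\Vert DF_{N,3}\Vert^2$ below by a clean constant. The strategy I would use is to isolate the contribution of the noise on a short time interval $[t-\epsilon,t]$ near the terminal time, on which $D_{s,y}U(t,x)\approx p_{s(t-s)/t}(y-\frac{s}{t}x)U(s,y)$ with $U(s,y)$ bounded away from $0$ in $L^p$, reduce the negative moment to a Gaussian small-ball type quantity, and absorb the $x$-dependence by a careful change of variables $y\mapsto y-\frac{s}{t}x$; the price of the truncation parameter $\epsilon$ (optimized as a power of $1/\log N$) is exactly what forces $\beta$ to be taken as large as $21$. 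Everything else — the chaos decomposition, the $L^p$ estimates, the variance lower bound — is parallel to the arguments behind \eqref{1.12} and Theorems \ref{thm1.1}–\ref{thm1.2}, with the fractional-kernel complications of case 1 replaced by the (well-understood) Brownian-bridge heat-kernel complications of case 3.
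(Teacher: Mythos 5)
Your overall strategy matches the paper's: the abstract density bound of Kuzgun--Nualart, the quantitative estimates behind \eqref{1.12} for the first-order term, and---for the genuinely new ingredient---negative moments of the Malliavin derivative obtained by localizing the noise to a short terminal time window $[t-\e^{\a},t]$, splitting off a principal (predictable) term bounded below via negative moments of $U(s,y)$ (which is where $\hat f(\R)<\infty$ and the small-ball estimate of Chen--Huang enter) plus a small stochastic remainder. This is exactly Proposition \ref{prop4.3} of the paper, and your diagnosis that the logarithmic losses originate there is correct: the lower bound on the deterministic mass degrades to $M_3(s,N)\geq C_t/\log N$ because $\sigma_{N,3}^2\asymp N\log N$, which propagates to $\E[(D_{v_{N,3}}F_{N,3})^{-p}]\leq C(\log N)^{\gamma}$ with $\gamma>5p$; taking $p=4$ and squaring the resulting $4$-norm, then multiplying by $\Vert 1-D_vF\Vert_2\lesssim\sqrt{\log N/N}$, yields $(\sqrt{\log N})^{\beta}/\sqrt N$ with $\beta>2\cdot 10+1=21$. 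So the threshold $21$ has a precise combinatorial origin, not just ``all logs dominated.''

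There is, however, one step in your outline that would fail as written. You state the abstract inequality in terms of $\Vert D^2F\Vert$ (and negative moments of $\Vert DF\Vert_{\mathcal H}$), and you estimate $\Vert D^2F_{N,3}\Vert\lesssim\sigma_{N,3}^{-1}N^{1/2}\asymp(\log N)^{-1/2}$. That quantity does not decay in $N$ at the rate $N^{-1/2}$, so plugging it into such an inequality cannot produce the claimed bound; your own arithmetic is inconsistent at this point (a factor of size $(\log N)^{-1/2}$ multiplied by bounded quantities cannot give $N^{-1/2}$ decay). The version of the density bound that works is the one for $F=\delta(v)$ (Lemma \ref{lem2.1}), in which the second-order term is the \emph{doubly contracted} scalar $\Vert D_v(D_vF)\Vert_2=\Vert\langle D\langle DF,v\rangle_{\mathcal H},v\rangle_{\mathcal H}\Vert_2$ with $v=v_{N,3}$ the explicit Skorokhod integrand of \eqref{4.6}. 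Because both contractions are against the kernel localized on $[0,N]$, this quantity gains a full factor of $N^{-1/2}$: the paper shows $\Vert D_{v_{N,3}}(D_{v_{N,3}}F_{N,3})\Vert_2\leq C_t\sqrt{\log N}/\sqrt N$ via the heat-kernel/Fourier computation in \eqref{5.11}--\eqref{5.13} (with some care needed since $1/s$ is not integrable at $s=0$, forcing an integration order different from case 2). Correspondingly, the negative moments you need are those of the scalar $D_{v_{N,3}}F_{N,3}$, not of $\Vert DF_{N,3}\Vert_{\mathcal H}$. With that substitution your outline aligns with the paper's proof.
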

According to the method in \cite{Ku22} (see Lemma \ref{lem2.1}), the estimates on $p$-norm of the second Malliavin derivative and $\Vert(D_vF)^{-1}\Vert_4$ are fundamental ingredients in discussing the convergence rates. Notice that, when dealing with $\Vert(D_vF)^{-1}\Vert_4$, we must ensure that negative moment of the solution $u(t,x)$ exists. In the case of $d=1$, $f=\de_0$, Chen et al \cite{Chen21} have proved that $\E\left[(u(t,x))^{-p}\right]<\infty~{\rm for~any~fixed}~(t,x)\in(0,+\infty)\times\R$ based on small-ball probability estimate. When $\eta$ is colored in space, small-ball probability estimate was given by Chen and Huang \cite{Chen19}. However, the negative moment of the solution exists only if $\hat f(\R^d)<\infty$ (see Lemma \ref{lem2.3} part (4) and Lemma \ref{lem2.4} part (4)).%\vskip 0.8em
%\noindent{\bf Remark.}
\begin{rem}
The collection of measures under the condition $\hat f(\R^d)<\infty$ is massive. For example, $f$ is given by a Gaussian kernel ($f(\d x)={p}_1(x)\d x$ and ${\hat f}(\d x)=e^{-\Vert x \Vert^2/2}\d x$) or a Cauchy kernel ($f(\d x)=\left[\prod_{j=1}^d(1+\vert x_j \vert^2)\right]^{-1}\d x$ and ${\hat f}(\d x)=\pi^d\prod_{j=1}^d e^{-\vert x_j \vert}\d x$).%\vskip 0.8em
\end{rem}
%\noindent{\bf Remark.}
\begin{rem}
Unfortunately, the multidimensional situation of case 3 has not been investigated so far. It has been proved in \cite[Theorem 1.3]{Davar21} that the convergence rate for CLT in terms of total variation is $N^{-1/2}$, which implies that the convergence rate of $\Vert 1-D_vF_N\Vert_2$ in Lemma \ref{lem2.1} is $N^{-1/2}$. However, $\Vert(D_vF_N)^{-1}\Vert_4$ can not be controlled by $N^{a}$ for any $a<1/2$ through our method currently. This is an issue for future research to explore. %\vskip 0.8em
\end{rem}
The organization of this paper is as follows. In Section 3, we derive moment estimates of the second Malliavin derivative of $u(t,x)$. Specifically, we obtain a more general result for the nonlinear stochastic fractional heat equation in case 1. Section 4 is devoted to analyzing the negative moments of $D_vF$. Moreover, we prove the convergence rates on uniform distance in Theorems \ref{thm1.1}-\ref{thm1.3} in Section 5, based on Malliavin-Stein's method and Fourier analysis. Finally, in the Appendix, we introduce some technical lemmas that are used along the paper.

Throughout this paper, we write $\Vert Z\Vert_p$ instead of $(\E \vert Z\vert^p)^{1/p}$ for any $Z\in L^p$ and we denote the generic nonnegative constant by $C$ that might take different values and depend on different variables.
\section{Preliminaries}
\subsection{The BDG inequality}

For every continuous $L^{2}(\Omega)$-martingale $\{M_{t}\}_{t\geq0}$, we have the following Burkholder-Davis-Gundy (BDG for short) inequality:
$$
\E\left({\vert M_{t}\vert}^{k}\right)\leq z_{k}^{k}\E{\left({\langle M\rangle}_{t}^{k/2}\right)}~~~{\rm for}~{\rm all}~t\geq 0~{\rm and}~k\geq2,
$$
where $\{z_{k}\}_{k\geq2}$ are the optimal constants. Moreover, the method in \cite{Carlen91} and \cite{Davis76} together implies that
$$
z_{2}=1,~~~{\rm and}~~~\sup_{k\geq2}\frac{z_{k}}{\sqrt{k}}=\lim_{k\rightarrow \infty}\frac{z_{k}}{\sqrt {k}}=2,
$$
which means ${z_{k}}$ is bounded from above by the multiples of $\sqrt{k}$, uniformly for all $k\geq2$.

\subsection{Malliavin calculus and Stein's method}
Denote by ${\mathcal H}_{0}$ the reproducing kernel Hilbert space spanned by all real-valued functions on ${\mathbb R}^{d}$, with respect to the scalar product $\langle\phi,\psi\rangle:={\langle\phi,{\psi\ast f\rangle}}_{L^{2}({\mathbb R}^{d})}$, and let ${\mathcal H}=L^{2}({\mathbb R}_{+}\times{\mathcal H}_{0})$. The Gaussian random field $\{W(h)\}_{h\in{\mathcal H}}$ formed by such Wiener integrals
\begin{equation}\label{2.1}
W(h):=\int_{{\mathbb R}^{+}\times{\mathbb R}^{d}}h(s,y)\eta({{\rm d}s,{\rm d}y})
\end{equation}
defines an isonormal Gaussian process on the Hilbert space ${\mathcal H}$. On the basis of this, we can develop the Malliavin calculus (see, for instance, \cite{Nualart06}).

Let ${\mathcal S}$ denote the space of simple random variables of the form
$$
F=f(W(h_1,\ldots,W(h_d))),
$$
where $f\in C_p^\infty(\R^d)$, that is, $f$ is a smooth function and all its partial derivatives have at most polynomial growth at infinity, and $h_1,\ldots,h_n\in {\mathcal H}$. Then the Malliavin derivative $DF$ is defined as $\mathcal H$-valued random variable
\begin{equation}
DF=\sum_{i=1}^d\frac{\partial f}{\partial x_i}\left(W(h_1),\ldots,W(h_d)\right)h_i.
\end{equation}
For any $p\geq1$ and $k\geq1$, we denote the completion of ${\mathcal S}$ by ${\mathbb D}^{k,p}$ with respect to the norm
$$
\Vert F\Vert_{k,p}=\left(\E\left[\vert F\vert^p\right]+\sum_{j=1}^k\E\left[\Vert D^j F\Vert_{{\mathcal H}^{\otimes j}}^p\right]\right)^{1/p},
$$
where $D^j$ denotes the $j$-th iterated Malliavin derivative and $\otimes$ denotes the tensor product. The adjoint operator $\de$ of the derivative $D$ is characterized by the duality formula
$$
\E[F\de(v)]=E[\langle DF,v\rangle_{\mathcal H}],
$$
for any $F\in {\mathcal H}$. An important property of $\de$ is that any predictable and square-integrable random field $v$ belongs to the domain of $\de$ and $\de(v)$ coincides with the Walsh integral, that is,
$$
\delta(v)=\int_{{\mathbb R}^{+}\times{\mathbb R}^{d}}v(s,x)\eta({{\rm d}s,{\rm d}x}).
$$

For an ${\mathcal H}$-valued random variable $v$ and $F\in {\mathbb D}^{1,1}$, define
\begin{equation}
D_vF:=\langle DF,v\rangle_{\mathcal H}.
\end{equation}
The following lemma, which characterizes the uniform distance of densities, plays an important role in proving Theorems \ref{thm1.1}-\ref{thm1.3}.
\begin{lem}\label{lem2.1}{\rm\cite[Theorem 3.2]{Ku22}}
Assume that $v\in{\mathbb D}^{1,6}({\mathcal H})$, $F=\de(v)\in{\mathbb D}^{2,6}$ with $\E(F)=0$ and $\E(F^2)=1$, and $\Vert (D_vF)^{-1}\Vert_4<\infty$. Then $F$ admits a density $f_F(x)$ and the following inequality holds true,
\begin{align}
\sup_{x\in\R}\vert f_F(x)-\phi(x)\vert &\leq\left(\Vert F\Vert_4\Vert (D_vF)^{-1}\Vert_4+2\right)\Vert 1-D_vF\Vert_2\notag\\
&~~+\Vert (D_vF)^{-1}\Vert_4^2\Vert D_v(D_vF)\Vert_2\label{2.4},
\end{align}
where $\phi(x)$ denotes the density of $\N(0,1)$.
\end{lem}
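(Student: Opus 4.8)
The plan is to combine the classical Malliavin density formula for a divergence-type functional with a second, more delicate integration by parts tailored to Stein's method. First I would note that under the hypotheses $v\in\mathbb{D}^{1,6}(\mathcal H)$, $F=\delta(v)\in\mathbb{D}^{2,6}$ and $\Vert(D_vF)^{-1}\Vert_4<\infty$, Hölder's inequality shows that $w:=v(D_vF)^{-1}$ lies in $\mathbb{D}^{1,2}(\mathcal H)\subset\mathrm{Dom}\,\delta$ and satisfies $\langle DF,w\rangle_{\mathcal H}=1$; consequently the classical density formula applies and produces a continuous density
\[
f_F(x)=\E\!\left[\mathbf 1_{\{F>x\}}\,\delta\big(v(D_vF)^{-1}\big)\right].
\]
Applying the Leibniz rule $\delta(gv)=g\,\delta(v)-\langle Dg,v\rangle_{\mathcal H}$ with $g=(D_vF)^{-1}$ and $\delta(v)=F$ gives $\delta(v(D_vF)^{-1})=(D_vF)^{-1}F+(D_vF)^{-2}D_v(D_vF)$, and since $\phi(x)=\E[\mathbf 1_{\{N>x\}}N]$ for $N\sim\mathrm{N}(0,1)$, I can write
\[
f_F(x)-\phi(x)=\Big(\E[\mathbf 1_{\{F>x\}}F]-\phi(x)\Big)+\E\!\left[\mathbf 1_{\{F>x\}}\big((D_vF)^{-1}-1\big)F\right]+\E\!\left[\mathbf 1_{\{F>x\}}(D_vF)^{-2}D_v(D_vF)\right].
\]
Writing $(D_vF)^{-1}-1=(D_vF)^{-1}(1-D_vF)$ and bounding the indicator by $1$, Hölder's inequality with exponents $(4,2,4)$ and $(2,2)$ estimates the last two terms by $\Vert F\Vert_4\Vert(D_vF)^{-1}\Vert_4\Vert 1-D_vF\Vert_2$ and $\Vert(D_vF)^{-1}\Vert_4^2\Vert D_v(D_vF)\Vert_2$, which account for the second summand and part of the first summand of (\ref{2.4}).

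It then remains to prove $\vert\E[\mathbf 1_{\{F>x\}}F]-\phi(x)\vert\le 2\Vert 1-D_vF\Vert_2$, where Stein's method is brought in. For fixed $x$ let $g_x$ be the bounded solution of $g_x'(y)-yg_x(y)=\mathbf 1_{(-\infty,x]}(y)-\Phi(x)$, with $\Phi$ the standard normal distribution function. Because $F=\delta(v)$, the identity $\E[Fg_x(F)]=\E[D_vF\,g_x'(F)]$ holds, so $\P(F\le x)-\Phi(x)=\E[(1-D_vF)g_x'(F)]$. I would then substitute $g_x'(F)=\mathbf 1_{\{F\le x\}}-\Phi(x)+Fg_x(F)$, use $\E[1-D_vF]=1-\E[F\delta(v)]=1-\E[F^2]=0$, and perform one further Malliavin integration by parts to replace $\E[\mathbf 1_{\{F\le x\}}D_vF]$ by $-\E[(x-F)^+F]$; this removes the non-smooth indicator and makes the right-hand side a $C^1$ function of $x$. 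Differentiating in $x$: continuity of $f_F$ gives $f_F(x)$ on the left, the $f_F(x)$ contributions cancel on the right, and after passing the derivative through the remaining expectations one obtains the clean identity
\[
\E[\mathbf 1_{\{F>x\}}F]-\phi(x)=\E\!\left[F\,(\partial_x g_x)(F)\,(1-D_vF)\right].
\]
An explicit computation shows that $\partial_x g_x$ solves the Stein equation with right-hand side $\delta_x(\cdot)-\phi(x)$ and, by monotonicity of $y\mapsto e^{y^2/2}\Phi(y)$ and $y\mapsto e^{y^2/2}(1-\Phi(y))$, satisfies $\Vert\partial_x g_x\Vert_\infty\le\max(\Phi(x),1-\Phi(x))\le1$; together with $\Vert F\Vert_2=1$, Cauchy–Schwarz yields $\vert\E[\mathbf 1_{\{F>x\}}F]-\phi(x)\vert\le\Vert 1-D_vF\Vert_2\le2\Vert 1-D_vF\Vert_2$. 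Summing the three estimates proves (\ref{2.4}).

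The main obstacle is justifying the $x$-differentiation through the indicator: a naive differentiation of $\E[\mathbf 1_{\{F\le x\}}D_vF]$ produces the conditional-expectation term $f_F(x)\,\E[1-D_vF\mid F=x]$, which cannot be controlled by any $L^p$-norm of $1-D_vF$; the role of the second Malliavin integration by parts is exactly to turn this into the benign term $\E[\mathbf 1_{\{F<x\}}F]$, so that the unwanted $f_F(x)$ cancels and dominated convergence can be applied using the uniform bound on $\partial_x g_x$. Secondary technical points are checking that the exponents $6$ and $4$ in the hypotheses are precisely what Hölder's inequality needs to place $v(D_vF)^{-1}\in\mathbb{D}^{1,2}(\mathcal H)$ and to guarantee $D_v(D_vF)\in L^2$ and $(D_vF)^{-2}\in L^2$, so that every term above is finite, together with the explicit sup-norm bound on the modified Stein solution $\partial_x g_x$ that legitimizes the limiting arguments. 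The constant $2$ in (\ref{2.4}) is not sharp; any uniform bound $\Vert\partial_x g_x\Vert_\infty\le2$ suffices.
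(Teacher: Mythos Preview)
The paper does not give its own proof of Lemma~\ref{lem2.1}; the result is simply quoted from \cite[Theorem~3.2]{Ku22}. Your argument is correct and is precisely the strategy used in \cite{Ku22}: the density formula $f_F(x)=\E[\mathbf 1_{\{F>x\}}\delta(v(D_vF)^{-1})]$, the Leibniz expansion $\delta(v(D_vF)^{-1})=(D_vF)^{-1}F+(D_vF)^{-2}D_v(D_vF)$, and the Stein--Malliavin identity $\E[\mathbf 1_{\{F>x\}}F]-\phi(x)=\E[F\,(\partial_xg_x)(F)(1-D_vF)]$ together with the sup-norm bound on $\partial_xg_x$. Your bound $\Vert\partial_xg_x\Vert_\infty\le\max(\Phi(x),1-\Phi(x))\le1$ is in fact the one used there, so the constant $2$ in (\ref{2.4}) is indeed not sharp, as you note.
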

\subsection{Some properties of $u(t,x)$}
We first introduce some properties of the moments and Malliavin derivative of $u(t,x)$ in the following three lemmas.
\begin{lem}\label{lem2.2}
Let $u(t,x)$ be the solution to (\ref{1.1}) in case 1, we have\\
$(1)$ {\rm\cite[Theorem 2.1]{Ass22}} The process $u(t):=\{u(t,x)\}_{x\in\R}$ is stationary. Moreover, for any $p\geq1$ and any $T>0$,
\begin{equation*}
\sup_{t\in(0,T],x\in\R}\E[\vert u(t,x)\vert^p]<\infty.
\end{equation*}
$(2)$ {\rm\cite[Propositions 5.1-5.2]{Ass22}} For almost all $0<s<t<T$, $x,y\in\R$,
\begin{equation}
D_{s,y}u(t,x)= G_{\alpha}(t-s, x-y)u(s,y)+\int_{s}^{t} \int_{\mathbb{R}} G_{\alpha}(t-r, x-z) D_{s, y} u(r, z) \eta(\d r, \d z).\label{2.5}
\end{equation}Moreover, for all $p\geq2$,
\begin{equation*}
\Vert D_{s,y}u(t,x)\Vert_p\leq C_{T,p}(t-s)^{-\frac{1}{2\a}}G_\a^{\frac{1}{2}}(t-s,x-y).
\end{equation*}
$(3)$ {\rm\cite[Theorem 1.5]{Chen21}} Fix $(t,x)\in\R^+\times\R$, for all $p>0$,
\begin{equation*}
\E\left([u(t,x)]^{-p}\right)<\infty.
\end{equation*}
\end{lem}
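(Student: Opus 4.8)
\textbf{Proof proposal for Lemma \ref{lem2.2}.}

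Each of the three parts is quoted from the literature, so the plan is to indicate how the stated references establish the claims and to reconstruct the arguments at the level of a sketch, since a self-contained paper may wish to recall them. For part (1), stationarity of $x\mapsto u(t,x)$ follows from the stationarity of the space-time white noise under spatial shifts together with the translation invariance of the Green kernel $G_\a(t,\cdot)$: if $\tau_a$ denotes the shift by $a\in\R$, then the mild formulation (\ref{1.3}) shows that $u(t,x+a)$ solves the same equation driven by the shifted noise $\eta\circ\tau_a$, which has the same law as $\eta$; by uniqueness of the solution this gives equality in law of the spatial process. The uniform moment bound $\sup_{t\in(0,T],x\in\R}\E[|u(t,x)|^p]<\infty$ is a standard Picard-iteration / Gronwall estimate: one writes $u_{n+1}$ in terms of $u_n$ via (\ref{1.3}), applies the BDG inequality of Section 2.1 to the stochastic integral, uses the semigroup bound $\int_0^t\!\!\int_\R G_\a^2(t-s,x-y)\,\d y\,\d s \lesssim t^{1-1/\a}$ (finite precisely because $\a\in(1,2]$), and closes a recursion on $\|u_n(t,x)\|_p^2$ that is uniform in $x$, producing a bound of the form $\exp(C_{T,p})$; passage to the limit in $L^p$ identifies the bound for $u$.

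For part (2), the equation (\ref{2.5}) for the Malliavin derivative is obtained by applying the operator $D_{s,y}$ formally to both sides of the mild form (\ref{1.3}): the constant $1$ is killed, differentiating the Walsh integral through the product rule yields the ``frozen'' term $G_\a(t-s,x-y)u(s,y)$ coming from $D_{s,y}$ hitting the integrator, plus the convolution term $\int_s^t\!\!\int_\R G_\a(t-r,x-z)D_{s,y}u(r,z)\,\eta(\d r,\d z)$ coming from $D_{s,y}$ hitting the integrand $u(r,z)$. Rigor is supplied by differentiating the Picard scheme and passing to the limit in $\mathbb D^{1,p}$, exactly as in \cite{Ass22}. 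The moment bound $\|D_{s,y}u(t,x)\|_p\le C_{T,p}(t-s)^{-1/(2\a)}G_\a^{1/2}(t-s,x-y)$ then follows by a Gronwall argument applied to $g(t):=\sup_z (t-s)^{1/(2\a)}G_\a^{-1/2}(t-s,x-z)\|D_{s,y}u(t,z)\|_p$ (or the equivalent weighted norm): the BDG inequality converts the $L^p$-norm of the stochastic integral into a space-time integral of $G_\a^2$ against $\|D_{s,y}u\|_p^2$, the semigroup property $\int_\R G_\a(t-r,x-z)G_\a(r-s,z-y)\,\d z = G_\a(t-s,x-y)$ handles the kernels, and the resulting singularity $(t-s)^{-1/\a}$ is integrable because $\a>1$; the leading $G_\a^{1/2}(t-s,x-y)$ and the prefactor $(t-s)^{-1/(2\a)}$ come directly from the first ``frozen'' term.

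For part (3), the existence of all negative moments $\E[(u(t,x))^{-p}]<\infty$ for fixed $(t,x)$ is the content of \cite[Theorem 1.5]{Chen21}, whose proof rests on a small-ball probability estimate: one shows $\Pr(u(t,x)<\e)\le \exp(-c(\log(1/\e))^2)$ (Gaussian-type decay) for small $\e>0$, using the Feynman--Kac-type representation/strict positivity of $u$ together with concentration for the associated Gaussian field, and then $\E[(u(t,x))^{-p}] = p\int_0^\infty \lambda^{p-1}\Pr(u(t,x)<1/\lambda)\,\d\lambda<\infty$ because the super-polynomial small-ball decay beats the polynomial weight. I would simply cite this and, if needed, recall the two ingredients (positivity and the small-ball bound).

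\textbf{Main obstacle.} Since every assertion is already in the cited papers, the only ``obstacle'' is expository: making the Gronwall/BDG estimate in part (2) come out with exactly the weight $(t-s)^{-1/(2\a)}G_\a^{1/2}(t-s,x-y)$ requires care with the non-Gaussian, Fourier-defined kernel $G_\a$ — one must use the bounds on $G_\a$ (positivity, the semigroup identity, and the $L^2$ estimate $\int_0^\tau\!\!\int_\R G_\a^2 \lesssim \tau^{1-1/\a}$) rather than the explicit Gaussian formulas available when $\a=2$. I expect no genuine difficulty beyond bookkeeping, and would present parts (1)–(3) mainly by reference with brief reconstructions as above.
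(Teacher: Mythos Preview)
Your proposal is correct and in fact goes beyond what the paper does: the paper gives no proof at all for Lemma~\ref{lem2.2}, simply citing \cite[Theorem 2.1]{Ass22}, \cite[Propositions 5.1--5.2]{Ass22}, and \cite[Theorem 1.5]{Chen21} inline for the three parts. Your reconstructions of the arguments (stationarity via shift invariance and uniqueness, the Picard/BDG/Gronwall scheme for the moment bounds on $u$ and $D_{s,y}u$, and the small-ball estimate for negative moments) are accurate sketches of how those cited results are proved, so there is no discrepancy in approach---you have just supplied the expository detail that the paper omits.
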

\begin{lem}\label{lem2.3}
Let $u(t,x)$ be the solution to (\ref{1.1}) in case 2, we have\\
$(1)$ {\rm\cite[Theorem 1.1]{Chen221}} The random field $u(t):=\{u(t,x)\}_{x\in\R^d}$ is stationary.\\
$(2)$ {\rm\cite[Lemma 4.2]{Chen22}} For almost all $0<s<t<T$, $x,y\in\R^d$,
\begin{equation}
D_{s,y}u(t,x)= p_{t-s}(x-y)u(s,y)+\int_{s}^{t} \int_{\mathbb{R}^d} p_{t-r}(x-z) D_{s, y} u(r, z) \eta(\d r, \d z).\label{2.6}
\end{equation}
Moreover, for all $p\geq2$,
\begin{equation*}
\sup_{t\in(0,T],x\in\R^d}\E[\vert u(t,x)\vert^p]<\infty~{\it and}~
\Vert D_{s,y}u(t,x)\Vert_p\leq C_{T,p}p_{t-s}(x-y).
\end{equation*}
$(3)$ {\rm\cite[Proposition 3.4]{Chen23}} For all $0<r<s<t$, $x,y,z\in\R^d$ and for every $p\geq2$,
\begin{equation*}
\Vert D_{r,z}D_{s,y}u(t,x)\Vert_p\leq C_{t,p}p_{t-s}(x-y)p_{s-r}(y-z).
\end{equation*}
$(4)$ Fix $(t,x)\in\R^+\times\R^d$, for all $p>0$,
\begin{equation*}
\E\left([u(t,x)]^{-p}\right)<\infty.
\end{equation*}
\end{lem}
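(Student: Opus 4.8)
The plan is to take parts (1)--(3) as given: each is quoted directly from the cited literature — stationarity of $\{u(t,x)\}_{x\in\R^d}$ from \cite{Chen221}, the Malliavin derivative equation (\ref{2.6}) together with the first-order moment bounds from \cite{Chen22}, and the second-order derivative bound from \cite{Chen23}. The only item requiring an argument is part (4), and the route I would follow parallels the proof of Lemma \ref{lem2.2}(3) (i.e.\ \cite[Theorem 1.5]{Chen21}) in the white-noise case, replacing the input small-ball estimate by its colored-noise counterpart. Since $u_0\equiv1>0$, the comparison principle for the PAM (equivalently, the Feynman--Kac representation, which is available here because $\hat f(\R^d)<\infty$ forces $f$ to have a bounded continuous density) gives $u(t,x)>0$ almost surely, so for every fixed $(t,x)$ and every $p>0$ one may write
\begin{equation*}
\E\left[(u(t,x))^{-p}\right]=p\int_0^\infty\lambda^{p-1}\,\P\!\left(u(t,x)<\lambda^{-1}\right)\d\lambda=p\int_0^\infty\e^{-p-1}\,\P\!\left(u(t,x)<\e\right)\d\e .
\end{equation*}
Thus finiteness of \emph{all} negative moments reduces to a left-tail (small-ball) bound for $u(t,x)$ that decays faster than every power of $\e$ as $\e\downarrow0$.

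The second step is to invoke the small-ball probability estimate of Chen and Huang \cite{Chen19} for the PAM driven by noise white in time and colored in space. Under the standing hypotheses of Case 2 — in particular $f(\R^d)<\infty$ and $\hat f(\R^d)<\infty$ — this yields, for each fixed $(t,x)$, a bound of the form $\P(u(t,x)<\e)\le C_{t,x}\exp\!\left(-c_{t,x}\,|\log\e|^{\gamma}\right)$ for all sufficiently small $\e>0$, with an exponent $\gamma>1$. Splitting the last integral above at a threshold $\e_0\in(0,1)$, the part over $[\e_0,\infty)$ is finite because $\P(u(t,x)<\e)\le1$ and $p>0$, while the part over $(0,\e_0]$ is dominated by $\int_0^{\e_0}\exp\!\left((p+1)|\log\e|-c_{t,x}|\log\e|^{\gamma}\right)\d\e$, which converges since $\gamma>1$ makes the integrand decay super-polynomially at $0$. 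This proves $\E[(u(t,x))^{-p}]<\infty$ for every $p>0$.

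The main obstacle lies in that second step: one must ensure the colored-noise small-ball estimate of \cite{Chen19} is genuinely applicable under the precise assumptions of Case 2 and, crucially, comes with a \emph{super-logarithmic} exponent $\gamma>1$, so that all negative moments — not merely those below some explicit threshold — are finite. Verifying this may require tracing through the localization and chaining scheme in \cite{Chen19} to confirm that the spectral condition $\hat f(\R^d)<\infty$ is precisely what that argument exploits; this is also the point at which the hypothesis $\hat f(\R^d)<\infty$ is indispensable, as anticipated in the discussion preceding the lemma. By comparison, the positivity of $u$ and the passage from negative moments to the tail integral are routine. If one prefers to avoid the tail-integral identity, an equivalent argument bounds $\E[\exp(-\lambda u(t,x))]$ directly by the same small-ball estimate and then integrates $\E[(u(t,x))^{-p}]=\Gamma(p)^{-1}\int_0^\infty\lambda^{p-1}\E[\exp(-\lambda u(t,x))]\d\lambda$; the analysis is essentially the same.
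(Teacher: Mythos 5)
Your proposal is correct and follows essentially the same route as the paper: parts (1)--(3) are direct citations, and part (4) is proved by combining the Chen--Huang small-ball estimate from \cite{Chen19} (valid precisely because $\hat f(\R^d)<\infty$) with the standard reduction of negative moments to a left-tail bound, which the paper formalizes via \cite[Lemma A.1]{Chen21} rather than writing out the tail integral. The only minor discrepancy is the shape of the small-ball bound: the actual estimate is $\P(u(t,x)<\e)\leq A\exp(-A|\log\e|(\log|\log\e|)^{2})$ rather than $\exp(-c|\log\e|^{\gamma})$ with $\gamma>1$, but this is still super-polynomial in $\e$ and the argument goes through unchanged.
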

\begin{proof}
(4). From \cite[Theorem 1.6]{Chen19}, since $\hat{f}(\R^d)<\infty$, we have that, for any fixed $(t,x)\in (0,+\infty)\times \R^d$, there exists a finite constant $A=A_{t,x}>0$ such that for all $\e>0$ small enough,
\begin{align}
\P(u(t,x)<\e)\leq A\exp(-A|\log\e|(\log|\log\e|)^{2}).\label{2.7}
\end{align}
According to \cite[Lemma A.1]{Chen21}, it suffices to show that, for all $p>0$, there exists some finite constant $C_p>0$ such that
\begin{align}
\P(u(t,x)<\e)\leq C_p\e^p.\label{2.8}
\end{align}
For any $p>0$, choose $C_p=A\vee\exp(p\exp({\sqrt{p/A}}))$. Then, for any $0<\e<\exp(-\exp(\sqrt{p/A}))$,
\begin{align}
\P(u(t,x)<\e)\leq A\exp(-A|\log\e|(\log|\log\e|)^{2})\leq C_p\e^p, \label{2.9}
\end{align}
and for any $\e\geq\exp(-\exp(\sqrt{p/A}))$,
\begin{align}
\P(u(t,x)<\e)\leq1\leq \exp(p\exp(\sqrt{p/A})) \e^p\leq C_p \e^p.\label{2.10}
\end{align}
Combining (\ref{2.9}) and (\ref{2.10}), we prove the result.
\end{proof}
\begin{lem}\label{lem2.4}
Let $u(t,x)$ be the solution to (\ref{1.1}) in case 3, $U(t,x)=u(t,x)/p_t(x)$, we have\\
$(1)$ {\rm\cite[Theorem 1.1]{Davar21}} The random field $U(t):=\{u(t,x)\}_{x\in\R}$ is stationary. Moreover, for any $p\geq2$ and any $T>0$,\\
\begin{equation*}
\sup_{t\in(0,T],x\in\R}\E[\vert U(t,x)\vert^p]<\infty.
\end{equation*}
$(2)$ {\rm\cite[Proposition 4.1]{Davar21}} For all $0<r<s<t$, $x,y,z\in\R$,
\begin{equation}
D_{s,y}U(t,x)= p_{\frac{s(t-s)}{t}}\left(y-\frac{s}{t}x\right)U(s,y)+\int_{s}^{t} \int_{\mathbb{R}} p_{\frac{r(t-r)}{t}}\left(z-\frac{r}{t}x\right) D_{s, y} U(r, z) \eta(\d r, \d z).\label{2.11}
\end{equation}
$(3)$ {\rm\cite[Corallary 1.2]{Ku20}} Suppose that $0<r_i<r_j<t$ for all $1\leq i<j\leq k$ and $z_i\in\R$ for $1\leq i\leq k$, let $D^k_{r_k,z_k}U(t,x)$ denote the $k$-th iterated Malliavin derivative of $U(t, x)$, i.e., $D^k_{r_k,z_k}U(t,x)=D_{r_1,z_1}\ldots D_{r_k,z_k}U(t,x)$, then for all $p\geq2$,
\begin{equation*}
\Vert D^k_{r_k,z_k}U(t,x)\Vert_p\leq C_{t,p}\left(\prod_{m=1}^{k-1}p_{\frac{{r_m(r_{m+1}-r_m)}}{r_{m+1}}}\left(z_{m}-\frac{r_m}{r_{m+1}}z_{m+1}\right)\right)
p_{\frac{{r_k(t-r_k)}}{t}}\left(z_{k}-\frac{r_k}{t}x\right).
\end{equation*}
$(4)$ Fix $(t,x)\in\R^+\times\R$, for all $p>0$,
\begin{equation*}
\E\left([u(t,x)]^{-p}\right)<\infty,~hence,~\E\left([U(t,x)]^{-p}\right)<\infty.
\end{equation*}
\end{lem}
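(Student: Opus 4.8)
\noindent\textbf{Proof proposal (Lemma \ref{lem2.4}, part (4)).}
The plan is to reproduce the argument of Lemma \ref{lem2.3} part (4), the only new points being the reduction of the negative moments of $U(t,x)$ to those of $u(t,x)$ and the verification that a small-ball estimate is available for the $\de_0$ initial datum. First I would note that, for fixed $t>0$ and $x\in\R$, the number $p_t(x)=(2\pi t)^{-1/2}e^{-x^{2}/(2t)}$ is a strictly positive deterministic constant, so that $\E[(u(t,x))^{-p}]=p_t(x)^{-p}\,\E[(U(t,x))^{-p}]$ for every $p>0$; the two claims are therefore equivalent, and it suffices to bound the negative moments of $u(t,x)$. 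As in the proof of Lemma \ref{lem2.3} part (4), by \cite[Lemma A.1]{Chen21} this reduces to showing $\P(u(t,x)<\e)\leq C_p\,\e^{p}$ for every $p>0$ and all sufficiently small $\e>0$, and for that it is enough, after the elementary splitting of the range of $\e$ carried out in (\ref{2.9})--(\ref{2.10}), to produce a sub-exponential bound of the type $\P(u(t,x)<\e)\leq A\exp(-A|\log\e|(\log|\log\e|)^{2})$.

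Such a bound would come from the small-ball estimate of Chen and Huang \cite{Chen19}, which is applicable here precisely because the standing assumptions of Case 3 include $\hat f(\R)<\infty$ (Dalang's condition). If \cite[Theorem 1.6]{Chen19} holds for initial data general enough to cover a $\de_0$ initial condition in dimension one, then the proof is verbatim that of Lemma \ref{lem2.3} part (4). Otherwise I would first strip the singularity at time $0$ using the Markov property of the solution: fixing an intermediate time $t_0\in(0,t)$, the law of $u(t,x)$ coincides with that at $(t,x)$ of the Case~2 PAM on $[t_0,t]$ started at time $t_0$ from the random profile $u(t_0,\cdot)$. Since the one-dimensional solution $u(t_0,\cdot)$ is a.s.\ strictly positive and continuous, on the fixed compact set $K:=[x-1,x+1]$ we have $u(t_0,z)\geq m_{t_0}:=\inf_{z\in K}u(t_0,z)>0$, an $\mathcal F_{t_0}$-measurable positive random variable; and by the comparison principle for the linear PAM, conditionally on $\mathcal F_{t_0}$ one obtains $u(t,x)\geq m_{t_0}\,w(t,x)$, where $w$ denotes the PAM on $[t_0,t]$ with bounded deterministic initial datum $w(t_0,\cdot)=\mathbf 1_K$. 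Applying \cite[Theorem 1.6]{Chen19} to $w(t,x)$ and combining it with an independent small-ball bound $\P(m_{t_0}<\e)\leq C_p\,\e^{p}$ for the infimum then yields the required sub-exponential estimate for $u(t,x)$.

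The step I expect to be the main obstacle is exactly this transfer of the small-ball estimate to the $\de_0$ initial condition: one must either confirm that the cited result of \cite{Chen19} already admits measure-valued initial data under $\hat f(\R)<\infty$, or carry out the Markov-restart reduction above and then control the auxiliary factor $m_{t_0}=\inf_{z\in K}u(t_0,z)$---the one genuinely new, if routine, ingredient, which calls for a uniform-in-space small-ball bound combined with a spatial modulus-of-continuity (chaining) argument for the one-dimensional PAM. Everything else, namely the passage from the small-ball estimate to the finiteness of all negative moments through \cite[Lemma A.1]{Chen21} and the scaling identity linking the negative moments of $u(t,x)$ and $U(t,x)$, is immediate.
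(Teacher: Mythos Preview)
Your proposal is correct and follows essentially the same route as the paper: the paper's proof of part (4) consists of the single sentence ``The argument is the same as the proof of Lemma \ref{lem2.3} part (4),'' i.e., invoke \cite[Theorem 1.6]{Chen19} under $\hat f(\R)<\infty$ to get the sub-exponential small-ball bound, and then run the splitting (\ref{2.9})--(\ref{2.10}) together with \cite[Lemma A.1]{Chen21}.

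The only difference is your hedging about whether \cite[Theorem 1.6]{Chen19} covers the $\de_0$ initial datum. It does: Chen and Huang's framework in \cite{Chen19} is set up for measure-valued initial conditions (including $\de_0$) in dimension $d\ge 1$, so the small-ball estimate applies directly and your Markov-restart contingency is not needed. Your reduction of $\E[(U(t,x))^{-p}]$ to $\E[(u(t,x))^{-p}]$ via the deterministic constant $p_t(x)>0$ is exactly the ``hence'' in the statement; the paper treats it as self-evident.
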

\begin{proof}
(4). The argument is the same as the proof of Lemma \ref{lem2.3} part (4).
\end{proof}

The following lemma states the asymptotic behavior of the variance functions and upper bounds of the moment of spatial averages.
\begin{lem}\label{lem2.5}
Let $F_{N,i}$ and $\sigma^2_{N,i}$ $(i=1,2,3.)$  be as defined in (\ref{1.7})-(\ref{1.9}). Then, we have\\
$(1)$ For any $p\geq2$, $\sup_{N\geq1}\|F_{N,i}\|_p\leq C_t$ {\rm($i=1,2$)}, and $\sup_{N\geq e}\|F_{N,3}\|_p\leq C_t$.\\
$(2)$ {\rm\cite[Theorem 5.6]{Ass22}} $\lim_{N\rightarrow\infty}\frac{\sigma^2_{N,1}}{N}=t.$\\
$(3)$ {\rm\cite[Proposition 5.2]{Chen23}} $\lim_{N\rightarrow\infty}\frac{\sigma^2_{N,2}}{N^d}=\int_{\R^d}\Cov[u(t,x),u(t,0)]\d x<\infty.$\\
$(4)$ {\rm\cite[Theorems 5.1-5.2]{Davar21}}
$\lim_{N\rightarrow\infty}\frac{\sigma^2_{N,3}}{N\log N}=tf(\R)$.
\end{lem}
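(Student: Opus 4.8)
Parts (2)--(4) of Lemma \ref{lem2.5} are quoted verbatim from the indicated references, so the only content that needs an argument is part (1). The plan is to represent the non-normalized spatial average as a Walsh (Skorohod) integral of a predictable integrand, bound its $p$-th moment via the Burkholder--Davis--Gundy inequality of Section 2.1 together with the uniform moment bounds on $u$ (resp.\ $U$), and finally divide by the variance asymptotics of parts (2)--(4).

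First I would use the mild formulations (\ref{1.3})--(\ref{1.5}) and a stochastic Fubini argument to write
\begin{gather*}
\int_{[0,N]}(u(t,x)-1)\,\d x=\de\bigl(\Phi_{N,1}\,u\bigr),\quad \Phi_{N,1}(s,y)=\int_0^N G_\a(t-s,x-y)\,\d x;\\
\int_{[0,N]^d}(u(t,x)-1)\,\d x=\de\bigl(\Phi_{N,2}\,u\bigr),\quad \Phi_{N,2}(s,y)=\int_{[0,N]^d}p_{t-s}(x-y)\,\d x;\\
\int_{[0,N]}(U(t,x)-1)\,\d x=\de\bigl(\Phi_{N,3}\,U\bigr),\quad \Phi_{N,3}(s,y)=\int_0^N p_{s(t-s)/t}\bigl(y-\tfrac{s}{t}x\bigr)\,\d x,
\end{gather*}
each integrand being adapted and square-integrable, hence in the domain of $\de$, with $\de(\cdot)$ the corresponding Walsh integral. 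Since $t\mapsto\de(\Phi_{N,i}u\,\mathbf 1_{[0,t]})$ is a continuous $L^2$-martingale, BDG and Minkowski's inequality give, for $p\geq2$,
\begin{align*}
\Bigl\|\int_{[0,N]^d}(u(t,x)-1)\,\d x\Bigr\|_p^2&\leq z_p^2\int_0^t\!\!\int\!\!\int\Phi_{N,i}(s,y)\Phi_{N,i}(s,y')\\
&\qquad\times\bigl\|u(s,y)u(s,y')\bigr\|_{p/2}\,f(y-y')\,\d y\,\d y'\,\d s,
\end{align*}
which in case 1 collapses via $f=\de_0$ to $z_p^2\int_0^t\int_\R\Phi_{N,1}(s,y)^2\|u(s,y)\|_p^2\,\d y\,\d s$, and in case 3 has $u$ replaced by $U$. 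By Cauchy--Schwarz and the uniform moment bounds of Lemmas \ref{lem2.2}(1), \ref{lem2.3}(2), \ref{lem2.4}(1) one has $\|u(s,y)u(s,y')\|_{p/2}\leq\sup_{s\leq t,z}\|u(s,z)\|_p^2=:C_{t,p}<\infty$, so everything reduces to the deterministic quantity $\int_0^t I_{N,i}(s)\,\d s$ with $I_{N,i}(s):=\int\int\Phi_{N,i}(s,y)\Phi_{N,i}(s,y')f(y-y')\,\d y\,\d y'$ (and $f=\de_0$ in case 1).

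Then I would estimate $\int_0^t I_{N,i}(s)\,\d s$ case by case. In case 1, Young's inequality gives $\int_\R\Phi_{N,1}(s,y)^2\,\d y=\|G_\a(t-s,\cdot)\ast\mathbf 1_{[0,N]}\|_{L^2}^2\leq\|G_\a(t-s,\cdot)\|_{L^1}^2\,N=N$, hence $\int_0^t I_{N,1}\leq tN$. In case 2, since $f(\R^d)<\infty$, $\int_{\R^d}\Phi_{N,2}(s,y)\,\d y=N^d$ and $\Phi_{N,2}\leq1$, one gets $I_{N,2}(s)\leq f(\R^d)N^d$ and $\int_0^t I_{N,2}\leq tf(\R^d)N^d$. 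In case 3, since $f(\R)<\infty$ and $\int_\R\Phi_{N,3}(s,y)\,\d y=N$, $I_{N,3}(s)\leq f(\R)\,N\,\|\Phi_{N,3}(s,\cdot)\|_\infty$; combining the crude bound $\|\Phi_{N,3}(s,\cdot)\|_\infty\leq t/s$ (change of variable $u=\tfrac{s}{t}x$) with the Gaussian sup bound $\|\Phi_{N,3}(s,\cdot)\|_\infty\leq N\sqrt{t/(2\pi s(t-s))}$ and splitting the $s$-integral at scale $s\asymp t^2/N^2$ and at $s=t/2$ yields $\int_0^t\|\Phi_{N,3}(s,\cdot)\|_\infty\,\d s\leq C_t\log N$ for $N\geq e$, hence $\int_0^t I_{N,3}\leq C_t f(\R)N\log N$. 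Dividing the moment bound by $\sigma_{N,i}^2$ and invoking the variance asymptotics $\sigma_{N,1}^2\sim tN$, $\sigma_{N,2}^2\sim c_t N^d$, $\sigma_{N,3}^2\sim tf(\R)N\log N$ of parts (2)--(4) --- together with strict positivity and $L^2$-continuity of $N\mapsto\sigma_{N,i}^2$ on the compact range $[1,N_0]$ (resp.\ $[e,N_0]$) to absorb small $N$ --- gives $\|F_{N,i}\|_p^2\leq C_t$ uniformly in $N$, which is part (1).

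The hard part will be case 3. Cases 1 and 2 are essentially bookkeeping once the integrand is identified, but in case 3 the bound $\|\Phi_{N,3}(s,\cdot)\|_\infty\leq t/s$ fails to be integrable near $s=0$, and one must use the $N$-dependent Gaussian bound to truncate it precisely at the scale $s\sim t^2/N^2$; this truncation is exactly what generates the $\log N$ factor that matches $\sigma_{N,3}^2\sim tf(\R)N\log N$. Keeping the numerical constants tracked through the splitting, and separately checking that the region $s\uparrow t$ contributes only an $O(t)$ term, is where the real work lies.
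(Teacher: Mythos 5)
Your proposal is correct and follows exactly the route the paper takes: the paper's proof of part (1) is a one-line appeal to the BDG inequality (citing \cite[Lemma 2.4]{Chen222}), and your argument is simply that BDG computation written out in full, with the case-by-case estimates of $\int_0^t I_{N,i}(s)\,\d s$ matching the variance asymptotics of parts (2)--(4). The detailed treatment of case 3 (the $s\asymp t^2/N^2$ truncation producing the $\log N$) is sound and is the only place where any real care is needed.
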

\begin{proof}(1). The upper bounds of the moments follow easily from the BDG inequality; see, for instance, \cite[Lemma 2.4]{Chen222}.
\end{proof}
\section{Second Malliavin derivative}
In this section, we will estimate the moment bounds of the second Malliavin derivative of $u(t,x)$. The estimates in case 2 and 3 can be found in Lemmas \ref{lem2.3}-\ref{lem2.4}. Hence, we prove the result only in case 1. Consider the more general setting,
{\begin{equation}\label{3.1}
\left\{
\begin{array}{lr}
{\partial _t}{u(t,x)}={-(-\Delta)^{\frac{\a}{2}} u(t,x)}+\sigma({u(t,x)}){\eta (t,x)}~~~{\rm for} ~{(t,x)} \in {(0,+\infty)}{\times{\mathbb R}},\\
{\rm subject~to}~~~~~{u(0,x)=1},\\
\end{array}
\right.
\end{equation}
where $\sigma$ denotes the Lipschitz function satisfied $\sigma(1)\neq0$. In order to obtain the following result, we further assume that $\sigma$ is twice continuously differentiable, $\sigma'$ is bounded and $\vert \sigma''(x)\vert\leq C(1+\vert x\vert^m)$ for some $m>0$.

\begin{prop}\label{prop3.1}Let $u(t,x)$ denote the solution to (\ref{3.1}). Then, $u(t, x) \in \cap_{p \geq 2} \mathbb{D}^{2, p}$ and for almost all $0<r<s<t$, $y, z \in \mathbb{R}$, we have
\begin{equation*}
\begin{aligned}
D_{r, z}D_{s, y}u(t, x)=& G_{\alpha}(t-s,x-y) \sigma^{\prime}(u(s, y))D_{r, z}u(s, y)\\
&+\int_{[s, t] \times \mathbb{R}} G_{\alpha}(t-\tau,x-\xi) \sigma^{\prime \prime}(u(\tau, \xi)) D_{r, z} u(\tau, \xi) D_{s, y} u(\tau, \xi) \eta(\d \tau, \d \xi) \\
&+\int_{[s, t] \times \mathbb{R}} G_{\alpha}(t-\tau,x-\xi) \sigma^{\prime}(u(\tau, \xi)) D_{r, z} D_{s, y} u(\tau, \xi) \eta(\d \tau, \d \xi) .
\end{aligned}
\end{equation*}
Moreover, for all $0 \leq r<s<t \leq T$ and $x, y, z \in \mathbb{R}$,
\begin{align}
\left\|D_{r, z} D_{s, y} u(t, x)\right\|^2_{p} \leq C_{T, p} K^2_{r, z, s, y}(t, x),\label{3.2}
\end{align}
where
\begin{equation*}
\begin{aligned}
&K^2_{r, z, s, y}(t, x)= (t-s)^{-\frac{1}{\alpha}}G_{\alpha}(t-s,x-y)(s-r)^{-\frac{1}{\alpha}}G_{\alpha}(s-r,y-z)\\
+&\int_s^t\int_{\R}\left[(t-\theta)(\theta-r)(\theta-s)\right]^{-\frac{1}{\alpha}}G_{\alpha}(t-\theta,x-w)
G_{\alpha}(\theta-r,w-z)
G_{\alpha}(\theta-s,w-y)\d \theta\d w.
\end{aligned}
\end{equation*}
In particular, if $\sigma(x)=x$, then
\begin{equation}
\left\|D_{r, z} D_{s, y} u(t, x)\right\|_{p} \leq C_{T, p}(t-s)^{-\frac{1}{2\alpha}}G^{\frac{1}{2}}_{\alpha}(t-s,x-y)(s-r)^{-\frac{1}{2\alpha}}G^{\frac{1}{2}}_{\alpha}(s-r,y-z).\label{3.3}
\end{equation}
\end{prop}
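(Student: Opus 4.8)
The plan is to proceed by a Picard iteration scheme, mirroring the way the first Malliavin derivative is handled in Lemma \ref{lem2.2}. First I would recall the Picard approximations $u_0(t,x)\equiv1$ and
$u_{n+1}(t,x)=1+\int_{(0,t)\times\R}G_\a(t-s,x-y)\sigma(u_n(s,y))\eta(\d s,\d y)$,
which converge to $u(t,x)$ in $L^p(\Omega)$, uniformly on compacts, for every $p\ge2$. Since $\sigma$ is Lipschitz with bounded first derivative, a now-standard argument shows $u_n(t,x)\in\mathbb D^{1,p}$ for all $n$, with the first derivative satisfying a linear equation whose kernel is $G_\a(t-s,x-y)\sigma'(u_n(s,y))$; combined with the bound in Lemma \ref{lem2.2}(2) one gets $\sup_n\|D_{s,y}u_n(t,x)\|_p\le C_{T,p}(t-s)^{-\frac1{2\a}}G_\a^{1/2}(t-s,x-y)$. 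Under the extra hypothesis that $\sigma$ is twice continuously differentiable with $\sigma'$ bounded and $|\sigma''(x)|\le C(1+|x|^m)$, one differentiates the first-derivative equation once more to see that $u_n(t,x)\in\mathbb D^{2,p}$ and that $D_{r,z}D_{s,y}u_{n+1}(t,x)$ solves the affine equation obtained by formally applying $D_{r,z}$ to \eqref{2.5}, namely the three-term expression in the statement but with $u_n$ in place of $u$ on the right-hand side.

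The core of the argument is then the moment estimate \eqref{3.2}, which I would establish uniformly in $n$ by a Gronwall-type iteration on
$H_n(r,z,s,y;t,x):=\|D_{r,z}D_{s,y}u_n(t,x)\|_p^2$.
Applying Minkowski's inequality, the BDG inequality (Section 2.1), and the $L^p$-bounds on $\sigma'$, $\sigma''$, $u$ (Lemma \ref{lem2.2}(1)) and on $D u$ (Lemma \ref{lem2.2}(2)), the first (non-stochastic) term contributes exactly $(t-s)^{-1/\a}G_\a(t-s,x-y)(s-r)^{-1/\a}G_\a(s-r,y-z)$ up to a constant; the $\sigma''$-stochastic term, after squaring and using $f=\de_0$ (so the noise correlation collapses the spatial integral to the diagonal), contributes the $\d\theta\,\d w$ integral in the definition of $K^2_{r,z,s,y}(t,x)$ by virtue of the semigroup/Chapman--Kolmogorov property $\int_\R G_\a(t-\theta,x-w)G_\a(\theta-s,w-y)\d w=G_\a(t-s,x-y)$ applied carefully with the singular prefactors $(\cdot)^{-1/\a}$; and the $\sigma'$-stochastic term produces $\int_s^t\int_\R G_\a^2(t-\theta,x-w)H_n(r,z,s,y;\theta,w)\,\d\theta\,\d w$, i.e. the term that feeds the iteration. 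I would then check that $K^2_{r,z,s,y}(t,x)$ is a supersolution of this linear integral inequality — equivalently that $\int_s^t\int_\R G_\a^2(t-\theta,x-w)K^2_{r,z,s,y}(\theta,w)\,\d\theta\,\d w$ is dominated by $C\,K^2_{r,z,s,y}(t,x)$ modulo a term absorbed by the other two — so that the fixed-point / Gronwall lemma (the fractional-heat analogue of Lemma \ref{lem2.2}; this is the kind of estimate recorded in the Appendix) closes the induction with a constant independent of $n$. Passing to the limit $n\to\infty$, using that $u_n\to u$, $Du_n\to Du$ in the relevant $L^p$ spaces and the uniform bounds to extract convergence of the second derivatives, identifies $D_{r,z}D_{s,y}u(t,x)$ with the claimed expression and transfers \eqref{3.2} to $u$.

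Finally, the special case $\sigma(x)=x$: then $\sigma''\equiv0$ kills the middle term and the $\d\theta\,\d w$ integral in $K^2$ disappears, leaving $K^2_{r,z,s,y}(t,x)=(t-s)^{-1/\a}G_\a(t-s,x-y)(s-r)^{-1/\a}G_\a(s-r,y-z)$, whose square root is precisely \eqref{3.3}. I expect the main obstacle to be the verification that $K^2_{r,z,s,y}(t,x)$ is indeed a supersolution, i.e. bounding the space-time convolution of $G_\a^2$ against $K^2$: unlike the Gaussian ($\a=2$) case there is no closed form for $G_\a$, so one must rely on the scaling $G_\a(t,x)=t^{-d/\a}G_\a(1,t^{-1/\a}x)$, the bounds $0\le G_\a(t,x)\le C t^{-d/\a}(1+t^{-1/\a}|x|)^{-d-\a}$, and the semigroup identity, and then control the resulting Beta-type time integrals with exponents involving $1/\a$ — ensuring in particular that the singularities $(t-\theta)^{-1/\a}$, $(\theta-s)^{-1/\a}$, $(\theta-r)^{-1/\a}$ are integrable, which is exactly where $\a\in(1,2]$ enters. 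Tracking constants so they depend only on $T,p$ (and $m$) and not on the space variables or $n$ is the remaining bookkeeping.
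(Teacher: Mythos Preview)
Your proposal is correct and matches the paper's approach: Picard iteration, a recursive inequality for $\|D_{r,z}D_{s,y}u_{n+1}\|_p^2$ via BDG and Minkowski with the bound $G_\a^2(t,x)\le Ct^{-1/\a}G_\a(t,x)$, and closure uniformly in $n$. The only difference is packaging: where you invoke a fractional-Gronwall/supersolution lemma, the paper unrolls the recursion explicitly and sums the resulting series using the semigroup property of $G_\a$ together with the Dirichlet-type identity
\[
\int_{\{0<r_k<\cdots<r_1<1\}}\bigl[(1-r_1)(r_1-r_2)\cdots r_k\bigr]^{-1/\a}\,\d r_k\cdots\d r_1
=\frac{\Gamma(1-\tfrac1\a)^{k+1}}{\Gamma\bigl((k+1)(1-\tfrac1\a)\bigr)},
\]
which is precisely the Beta/Gamma mechanism you anticipate and is what guarantees convergence of the iterated constants (a bare supersolution bound $\int G_\a^2 K^2\le C\,K^2$ with $C$ not necessarily $<1$ would not close the induction on its own).
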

\begin{proof}%[Proof of Proposition 3.1.]
First, we define the Picard iteration for the solution to (\ref{3.1}). Let $u_0(t,x)=1$, and for $n\in{\mathbb N}$,
\begin{align}
u_{n+1}(t,x)=1+\int_{(0,t)\times{\mathbb R}}G_\a(t-s,x-y)\sigma(u_n(s,y))\eta({\rm d}s,{\rm d}y)~~~{\rm for}~t>0~{\rm and}~x\in {\mathbb R},\label{3.4}
\end{align}
Applying the properties of the divergence operator \cite[Proposition 1.3.8]{Nualart06}, we deduce, for almost all $(s,y)\in(0,t)\times\R$, that
\begin{align}
D_{s, y} u_{n+1}(t, x)=&G_\a(t-s,x-y)\sigma(u_n(s,y))\notag\\
&+\int_{[s, t] \times \mathbb{R}} G_\a(t-\tau,x-\xi) \sigma'(u_n(\tau,\xi)) D_{s, y} u_{n}(\tau, \xi) \eta(\d \tau, \d \xi),\label{3.5}
\end{align}
and for almost all $s>t$, $D_{s, y} u_{n+1}(t, x)=0$. Using the properties of the divergence operator again, we obtain, for almost all $0<r<s<t$ and $y,z\in\R$,
\begin{align}
D_{r, z} D_{s, y} & u_{n+1}(t, x)=G_\a(t-s,x-y)  \sigma'(u_n(s,y)) D_{r, z} u_{n}(s, y)\notag \\
& +\int_{[s, t] \times \mathbb{R}} G_\a(t-\tau,x-\xi)\sigma''(u_n(\tau,\xi)) D_{r, z} u_{n}(\tau, \xi) D_{s, y} u_{n}(\tau, \xi) \eta(\d \tau, \d \xi)\notag \\
& +\int_{[s, t] \times \mathbb{R}}G_\a(t-\tau,x-\xi) \sigma'(u_n(s,y)) D_{r, z} D_{s, y} u_{n}(\tau, \xi) \eta(\d \tau, \d \xi).\label{3.6}
\end{align}
Moreover, applying the BDG inequality, Minkowski's inequality and Lemma \ref{lem2.2} (Lemma 2.2 still holds for the nonlinear case, see \cite{Ass22}), we have, for almost all $(t,x)\in[0,T]\times\R$,
\begin{align}
&\left\|D_{r, z} D_{s, y} u_{n+1}(t, x)\right\|_{p}^{2}  \leq C_{T, p}G^2_\a(t-s,x-y)(s-r)^{-\frac{1}{\alpha}}G_{\alpha}(s-r,y-z)\notag \\
&~+ C_{T, p} \int_{s}^{t} \int_{\mathbb{R}}
G^2_\a(t-\tau,x-\xi)(\tau-r)^{-\frac{1}{\alpha}}G_{\alpha}(\tau-r,\xi-z)
(\tau-s)^{-\frac{1}{\alpha}}G_{\alpha}(\tau-s,\xi-y)\d \xi \d \tau \notag\\
&~+ C_{T, p} \int_{s}^{t} \int_{\mathbb{R}}G^2_\a(t-\tau,x-\xi)\left\|D_{r, z} D_{s, y} u_{n}(\tau, \xi)\right\|_{p}^{2} \d \xi \d \tau.\label{3.7}
\end{align}
In order to simplify the expression, we define the measure on $[s,t]\times\R$ such that
\begin{equation*}
\begin{aligned}
J(\d \tau,\d \xi):=&(\tau-r)^{-\frac{1}{\alpha}}G_{\alpha}(\tau-r,\xi-z)\delta_{s,y}(\d \tau,\d \xi)\\&+(\tau-r)^{-\frac{1}{\alpha}}G_{\alpha}(\tau-r,\xi-z)
(\tau-s)^{-\frac{1}{\alpha}}G_{\alpha}(\tau-s,\xi-y)\d \tau\d\xi.
\end{aligned}
\end{equation*}
Then, we can rewrite the inequality (\ref{3.7}) as follows:
\begin{equation*}
\begin{aligned}
\left\|D_{r, z} D_{s, y} u_{n+1}(t, x)\right\|_{p}^{2}\leq&
 C_{T, p}\int_{s}^{t} \int_{\mathbb{R}}G^2_\a(t-\tau,x-\xi)J(\d \tau,\d \xi) \\
&+ C_{T, p} \int_{s}^{t} \int_{\mathbb{R}}G^2_\a(t-\tau,x-\xi)\left\|D_{r, z} D_{s, y} u_{n}(\tau, \xi)\right\|_{p}^{2} \d \xi \d \tau.
\end{aligned}
\end{equation*}
Notice that $\left\|D_{r, z} D_{s, y} u_{1}(t, x)\right\|^2_p=0$, then we perform $n-1$ iterations to obtain that
\begin{equation*}
\begin{aligned}
&\left\|D_{r, z} D_{s, y} u_{n+1}(t, x)\right\|_{p}^{2} \leq C_{T, p} \int_{s}^{t} \int_{\mathbb{R}} G^2_\a(t-s_1,x-y_1)J\left(\d s_{1}, \d y_{1}\right) \\
&~+\sum_{k=1}^{n-1} C_{T, p}^{k+1} \int_{s}^{t} \int_{\mathbb{R}} \int_{s}^{s_{1}} \int_{\mathbb{R}} \cdots \int_{s}^{s_{k}} \int_{\mathbb{R}} G^2_\a(t-s_1,x-y_1)G^2_\a(s_1-s_2,y_1-y_2)\cdots\\
&~~~\times
G^2_\a(s_k-s_{k+1},y_k-y_{k+1})J\left(\d s_{k+1}, \d y_{k+1}\right) \d y_{k} \d s_{k} \cdots \d y_{1} \d s_{1} .
\end{aligned}
\end{equation*}
Let $K^2(t,x)$ be defined by $K^2(t,x)=K^2_{r,z,s,y}(t,x):=\int_{s}^t\int_{\mathbb{R}}(t-\tau)^{-\frac{1}{\a}}G _\a(t-\tau,x-\xi) J(\d \tau , \d \xi)$. Using Lemma \ref{lem6.1} part (1) we can write
\begin{equation*}
\begin{aligned}
&\left\|D_{r, z} D_{s, y} u_{n+1}(t, x)\right\|_{p}^{2} \leq C_{T, p} K^2(t,x)
+\sum_{k=1}^{n-1}{C_{T, p}^{k+1}}\int_{s}^{t} \int_{\mathbb{R}} \int_{s}^{s_{1}} \int_{\mathbb{R}} \cdots \int_{s}^{s_{k}} \int_{\mathbb{R}}\\
&~~~\times\prod_{j=0}^{k}\left(s_{j}-s_{j+1}\right)^{-\frac{1}{\a}} G_\a(s_j-s_{j+1},y_j-y_{j+1})J\left(\d s_{k+1}, \d y_{k+1}\right)\d y_{k} \d s_{k} \cdots \d y_{1} \d s_{1},
\end{aligned}
\end{equation*}
where $s_0=t$ and $y_0=x$. Then, apply the semigroup property of the Green kernel to find that
\begin{equation*}
\begin{aligned}
&\left\|D_{r, z} D_{s, y} u_{n+1}(t, x)\right\|_{p}^{2}\leq C_{T, p} K^2(t,x)
+\sum_{k=1}^{n-1}{C_{T, p}^{k+1}}\int_{s}^{t} \int_{\mathbb{R}} \int_{s}^{s_{1}} \cdots \int_{s}^{s_{k}}\\
&~~~\times [(t-s_1)(s_1-s_2)\cdots(s_k-s_{k+1})]^{-\frac{1}{\a}}G_\a(t-s_{k+1},x-y_{k+1})J\left(\d s_{k+1}, \d y_{k+1}\right)\d s_{k}\cdots\d s_{1}\\
&=C_{T, p} K^{2}(t, x)+\sum_{k=1}^{n-1}\left[{C_{T, p}^{k+1}}\int_{\mathbb{R}} \int_{s}^t(t-s_{k+1})^{\frac{(\a-1)k-1}{\a}}G_\a(t-s_{k+1},x-y_{k+1}) J(\d s_{k+1} , \d y_{k+1})\right. \\
&~~~\times \left.\int_{0<r_{k}<\cdots<r_{2}<r_{1}<1}\left[\left(1-r_{1}\right)\left(r_{1}-r_{2}\right) \cdots r_{k}\right]^{-\frac{1}{\a}}\d r_{k} \cdots \d r_{1}\right] \\
&\leq C_{T, p} K^{2}(t, x)+\sum_{k=1}^{n-1}C_{T, p}^{k+1}\frac{T^{\frac{(\a-1)k}{\a}}\Gamma\left(1-\frac{1}{\a}\right)^{k+1}}
{\Gamma\left((k+1)\left(1-\frac{1}{\a}\right)\right)}
\int_{\mathbb{R}}\int_{s}^t(t-\tau)^{-\frac{1}{\a}}G_\a(t-\tau,x-\xi) J(\d \tau , \d \xi)\\
&\leq\left(C_{T,p}+\sum_{k=1}^{\infty}C_{T, p}^{k+1}\frac{T^{\frac{(\a-1)k}{\a}}\Gamma\left(1-\frac{1}{\a}\right)^{k+1}}
{\Gamma\left((k+1)\left(1-\frac{1}{\a}\right)\right)}\right)\int_{\mathbb{R}}\int_{s}^t(t-\tau)^{-\frac{1}{\a}}G_\a(t-\tau,x-\xi) J(\d \tau , \d \xi)\\
&\leq C_{T,p}K^2(t,x),
\end{aligned}
\end{equation*}
where in the second inequality, we use the following identity:
\begin{align}
\int_{0<r_{k}<\cdots<r_{2}<r_{1}<1}\left[\left(1-r_{1}\right)\left(r_{1}-r_{2}\right) \cdots r_{k}\right]^{-\frac{1}{\a}}\d r_{k} \cdots \d r_{1}=\frac{\Gamma\left(1-\frac{1}{\a}\right)^{k+1}}
{\Gamma\left((k+1)\left(1-\frac{1}{\a}\right)\right)}.\label{3.8}
\end{align}
Hence, we obtain the moment estimate
\begin{align}\sup _{n \in \mathbb{N}}\|D_{r, z} D_{s, y} u_{n}(t, x) \|^2_{p} \leq {C}_{T, p}K^2_{r,z,s,y}(t,x).\label{3.9}
\end{align}
In particular, if $\sigma(x)=x$, the second part of (\ref{3.6}) vanishes. According to the preceding arguments, we have
$$\sup _{n \in \mathbb{N}}\|D_{r, z} D_{s, y} u_{n}(t, x) \|_{p} \leq {C}_{T, p}(t-s)^{-\frac{1}{2\alpha}}G^{\frac{1}{2}}_{\alpha}(t-s,x-y)
(s-r)^{-\frac{1}{2\alpha}}G^{\frac{1}{2}}_{\alpha}(s-r,y-z).$$
Moreover, using Minkowski's inequality and Lemma \ref{lem6.2} we derive that
\begin{align}
\sup _{n \in \mathbb{N}} \E\left[\|D^{2} u_{n}(t, x)\|^p_{{\mathcal H}\otimes{\mathcal H}}\right] & \leq \sup _{n \in \mathbb{N}}\left(\int_{[0, t]^{2}} \int_{\mathbb{R}^{2}}\left\|D_{r, z} D_{s, y} u_{n}(t, x)\right\|_{p}^{2} \d y \d z \d r \d s\right)^{\frac{p}{2}} \notag\\
& \leq {C}_{T, p}\left(2 \int_{0}^{t} \int_{0}^{s} \int_{\mathbb{R}^{2}} K^2_{r,z,s,y}(t,x) \d z \d y \d r \d s\right)^{\frac{p}{2}}<\infty.\label{3.10}
\end{align}
Finally, since $u_n(t,x)$ converges to $u(t,x)$ in $L^p(\Omega)$ for $p\geq2$ (see in the proof of \cite[Theorem 2.1]{Ass22}), we deduce that $u(t,x)\in\cap_{p \geq 2} \mathbb{D}^{2, p}$ by \cite[Lemma 1.5.3]{Nualart06}. Following from a similar approximation argument as the proof of Theorem 6.4 in \cite{Chen221}, we prove the results.
\end{proof}

\section{Negetive moments}
We will show estimates on the negative moments of $D_{v}F$ in this section. The following lemmas play an important role in proving Theorems \ref{thm1.1}-\ref{thm1.3}.
\begin{prop}\label{prop4.1}
{\rm ({Case} 1)}. Let $F_{N,1}$ denote the spatial average defined in (\ref{1.7}). Then, for any $p\geq2$,
\begin{align}
\sup _{N \geq 1} \mathrm{E}\left[\left|D_{v_{N,1}} F_{N,1}\right|^{-p}\right]<\infty .
\label{4.1}
\end{align}
\end{prop}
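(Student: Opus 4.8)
The plan is to follow the strategy used by Kuzgun--Nualart and by Chen et al., namely to bound $\bigl\|(D_{v_{N,1}}F_{N,1})^{-1}\bigr\|_p$ by exhibiting a positive lower bound for $D_{v_{N,1}}F_{N,1}$ that is controlled from below by a positive random variable with finite negative moments, plus a remainder that is small in every $L^q$. First I would recall the explicit structure of $v_{N,1}$: writing $F_{N,1}=\delta(v_{N,1})$ with $v_{N,1}(s,y)=\sigma_{N,1}^{-1}\int_{[0,N]}G_\alpha(t-s,x-y)u(s,y)\,\d x$ (this is the Clark--Ocone type representation that follows from (\ref{1.3}) and the Malliavin derivative formula (\ref{2.5})). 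Then
$$
D_{v_{N,1}}F_{N,1}=\langle DF_{N,1},v_{N,1}\rangle_{\mathcal H}
=\frac{1}{\sigma_{N,1}^2}\int_0^t\int_{\R}\left(\int_{[0,N]}D_{s,y}u(t,x)\,\d x\right)\left(\int_{[0,N]}G_\alpha(t-s,x',y)u(s,y)\,\d x'\right)\d y\,\d s,
$$
where with $f=\delta_0$ the inner product over $\mathcal H_0$ is just an $L^2(\R)$ integral in $y$. Substituting (\ref{2.5}) for $D_{s,y}u(t,x)$ splits this into a \emph{main term}, coming from the first summand $G_\alpha(t-s,x-y)u(s,y)$ of $D_{s,y}u(t,x)$, and a \emph{remainder term} coming from the stochastic-integral summand.

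The main term is
$$
A_N:=\frac{1}{\sigma_{N,1}^2}\int_0^t\int_{\R}\left(\int_{[0,N]}G_\alpha(t-s,x-y)\,\d x\right)^2 u(s,y)^2\,\d y\,\d s,
$$
which is manifestly nonnegative. The key observation is that $A_N$ is bounded below by restricting the $s$-integral to a small interval $[t-\e,t]$ and the $y$-integral to a bounded window, and on that window $\int_{[0,N]}G_\alpha(t-s,x-y)\,\d x$ is bounded below by a positive constant (as $s\uparrow t$ the kernel concentrates and its mass over $[0,N]$ is $\Theta(1)$ for $y$ in the bulk of $[0,N]$). After using stationarity of $u(s,\cdot)$ and the normalization $\sigma_{N,1}^2\sim tN$ from Lemma \ref{lem2.5}(2), one gets $A_N\geq c\,\frac1N\int_{I_N}u(s_0,y)^2\,\d y$ for a suitable deterministic window $I_N\subset[0,N]$ of length $\asymp N$ and some fixed $s_0$ close to $t$ — more carefully one keeps the $s$-average. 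To convert this into a bound on negative moments, I would use the small-ball / negative-moment input $\E[u(s,y)^{-p}]<\infty$ uniformly (Lemma \ref{lem2.2}(3) together with stationarity and uniform boundedness of moments Lemma \ref{lem2.2}(1)), applied to a Jensen-type or $L^p$-chaining argument: by Hölder, $\E\bigl[(\frac1N\int_{I_N}u(s,y)^2\d y)^{-p}\bigr]\leq \frac1{|I_N|}\int_{I_N}\E[u(s,y)^{-2p}]\d y\leq C$, uniformly in $N$. This handles the main term.

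The remaining, and I expect hardest, step is to show the remainder term $R_N:=D_{v_{N,1}}F_{N,1}-A_N$ is negligible, in the precise sense that $\|R_N\|_q\to0$ (or at least stays bounded and can be absorbed). Here one substitutes the stochastic integral from (\ref{2.5}), applies the BDG inequality from Section 2.1 together with the moment bounds in Lemma \ref{lem2.2}(2) for $\|D_{s,y}u(t,x)\|_p$ and the bound $\|u\|_p\le C$, and reduces to estimating space-time convolution integrals of the Green kernel $G_\alpha$; the factor $\sigma_{N,1}^{-2}\asymp N^{-1}$ against a double spatial integral over $[0,N]$ of order $N^2$ times a time integral that produces an extra small factor (of order $\e^{(\alpha-1)/\alpha}$ or similar after using $\int_0^t(t-s)^{-1/\alpha}\d s<\infty$ for $\alpha>1$) is what must be shown to be $o(1)$, or bounded; the Fourier-transform definition of $G_\alpha$ forces these convolution estimates to be done via $\widehat{G_\alpha(t,\cdot)}(\xi)=e^{-t|\xi|^\alpha}$ and Plancherel, which is the main technical burden and the analogue of the ``much more complex'' computations flagged after Theorem \ref{thm1.1}. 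Once $\|R_N\|_q$ is controlled, I would finish by the standard argument: on the event $\{A_N\geq \tfrac12 c\}$ one has $D_{v_{N,1}}F_{N,1}\geq \tfrac12 c - |R_N|$, and splitting according to whether $|R_N|\le \tfrac14 c$ or not, using Chebyshev with high-order moments of $R_N$ on the bad event and the negative-moment bound on $A_N$ on the good event (together with $A_N^{-1}\leq (c/2)^{-1}$ where $A_N\ge c/2$ is not needed — rather one uses $\E[A_N^{-p}]<\infty$ directly and a Cauchy--Schwarz splitting $\E[|D_{v_{N,1}}F_{N,1}|^{-p}]\le \E[A_N^{-2p}]^{1/2}\,\P(|R_N|>\tfrac12 A_N)^{1/2}+\cdots$), yields $\sup_{N\ge1}\E[|D_{v_{N,1}}F_{N,1}|^{-p}]<\infty$. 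I would organize the write-up as: (i) the representation of $D_{v_{N,1}}F_{N,1}$ and its splitting; (ii) the lower bound and negative-moment estimate for $A_N$; (iii) the $L^q$-smallness of $R_N$ via BDG and Fourier analysis; (iv) the combination. Step (iii) is where essentially all the work lies.
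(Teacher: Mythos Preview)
Your decomposition into a main term $A_N$ (from the $G_\alpha(t-s,x-y)u(s,y)$ summand of $D_{s,y}u(t,x)$) and a stochastic-integral remainder $R_N$, together with the Jensen argument for $\E[A_N^{-q}]$ and BDG for $\|R_N\|_q$, matches the paper's strategy. The genuine gap is in your combination step (iv): over the full interval $[0,t]$ the remainder $R_N$ is only $O(1)$ in $L^q$, not $o(1)$, and knowing merely $\E[A_N^{-p}]<\infty$ together with $\|R_N\|_q\le C$ does \emph{not} imply $\E[(A_N+R_N)^{-p}]<\infty$. Your Cauchy--Schwarz splitting fails on the bad event $\{|R_N|>\tfrac12 A_N\}$, where $|D_{v_{N,1}}F_{N,1}|^{-p}$ is not controlled at all; the probability of that event being bounded by a constant is useless.

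The paper's fix is to make the truncation parameter $\varepsilon$ do double duty. Using $D_{s,y}u\ge 0$ and $u\ge 0$, one has $D_{v_{N,1}}F_{N,1}\ge I_1+I_2$, where \emph{both} $I_1$ and $I_2$ are the restrictions of your $A_N$ and $R_N$ to $s\in[t-\varepsilon^a,t]$ for a parameter $a\in(0,1)$. Then the Jensen argument gives $\E[I_1^{-q}]\le C\varepsilon^{-aq}$ (the negative moment blows up as the window shrinks), while BDG gives $\E[|I_2|^q]\le C\varepsilon^{3aq/2}$ (three time integrals over the short window). Chebyshev then yields
\[
\P\bigl(D_{v_{N,1}}F_{N,1}<\varepsilon\bigr)\le (2\varepsilon)^q\,\E[I_1^{-q}]+\varepsilon^{-q}\,\E[|I_2|^q]\le C\bigl(\varepsilon^{(1-a)q}+\varepsilon^{(3a/2-1)q}\bigr),
\]
and choosing $a=4/5$ balances the exponents to give $\P(D_{v_{N,1}}F_{N,1}<\varepsilon)\le C\varepsilon^{q/5}$ uniformly in $N$, from which the negative-moment bound follows by integrating in $\varepsilon$ with $q>5p$. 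The point you are missing is precisely this balancing: the same $\varepsilon$ that localizes the lower bound must also shrink the remainder, and the exponents have to be tuned so both Chebyshev terms are small. Your step (iii) as written (``$\|R_N\|_q\to 0$ or bounded'') does not capture this, and your step (iv) cannot be completed without it.
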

\begin{prop}\label{prop4.2}
{\rm ({Case} 2)}. Let $F_{N,2}$ denote the spatial average defined in (\ref{1.8}). Then, for any $p\geq2$,
\begin{align}
\sup _{N \geq 1} \mathrm{E}\left[\left|D_{v_{N,2}} F_{N,2}\right|^{-p}\right]<\infty .
\label{4.2}
\end{align}
\end{prop}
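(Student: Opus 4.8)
The plan is to reduce the negative moment estimate for $D_{v_{N,2}}F_{N,2}$ to the negative moment bound for the solution $u(t,x)$ in Lemma \ref{lem2.3} part (4), by showing that $D_{v_{N,2}}F_{N,2}$ is, up to a small $L^p$-error, a spatial average of $u(t,x)^2$ (or $\sigma(u)^2$, which here is $u^2$ since $\sigma(x)=x$), and that such an average cannot be too small. First I would write out $D_{v_{N,2}}F_{N,2}=\langle DF_{N,2},v_{N,2}\rangle_{\mathcal H}$ explicitly. Since $F_{N,2}=\sigma_{N,2}^{-1}\int_{[0,N]^d}(u(t,x)-1)\,\d x=\delta(v_{N,2})$ with $v_{N,2}(s,y)=\sigma_{N,2}^{-1}\int_{[0,N]^d}p_{t-s}(x-y)u(s,y)\,\mathbf 1_{[0,t]}(s)\,\d x$, the Malliavin derivative $D_{r,z}F_{N,2}$ has a leading term $\sigma_{N,2}^{-1}\int_{[0,N]^d}p_{t-r}(x-z)u(r,z)\,\d x$ coming from differentiating the kernel, plus an iterated stochastic-integral term from differentiating $u(s,y)$ inside. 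Pairing against $v_{N,2}$ in $\mathcal H$ and using that $\langle\phi,\psi\rangle_{\mathcal H_0}=\int_{\R^d}\int_{\R^d}\phi(y)\psi(y')f(y-y')\,\d y\,\d y'$, the dominant contribution of $D_{v_{N,2}}F_{N,2}$ is
\[
A_N:=\frac{1}{\sigma_{N,2}^2}\int_0^t\int_{\R^d}\int_{\R^d}\left(\int_{[0,N]^d}p_{t-s}(x-y)u(s,y)\,\d x\right)\left(\int_{[0,N]^d}p_{t-s}(x'-y')u(s,y')\,\d x'\right)f(y-y')\,\d y\,\d y'\,\d s,
\]
which is nonnegative; the remaining terms, call them $R_N$, are genuinely $O(N^{-d/2})$ in every $L^p$, by the BDG/Minkowski estimates together with Lemma \ref{lem2.3} parts (2)–(3) exactly as in the proof that $\|1-D_vF_{N,2}\|_2\lesssim N^{-d/2}$.

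The key step is a lower bound on $A_N$ in terms of a genuine spatial average of $u^2$. Restricting the $s$-integral to $[t/2,t]$, the $y'$-integral to a unit ball around $y$, and using $f(\R^d)<\infty$ with $\hat f(\R^d)<\infty$ so that $f$ has a bounded continuous density near the origin bounded below on a neighbourhood (or, more robustly, using nonnegative-definiteness of $f$ and a suitable mollification), one produces a constant $c>0$ and a bounded region so that
\[
A_N\ \geq\ \frac{c}{\sigma_{N,2}^2}\int_0^t\int_{\R^d}\left(\int_{[0,N]^d}p_{t-s}(x-y)\,\d x\right)^2 u(s,y)^2\,\d y\,\d s.
\]
Then I would shrink the $x$-region: for $y$ in the bulk $[\varepsilon N,(1-\varepsilon)N]^d$ and $s\in[t/2,t]$, the inner heat integral $\int_{[0,N]^d}p_{t-s}(x-y)\,\d x$ is bounded below by a constant, so $A_N\gtrsim\sigma_{N,2}^{-2}\int_{[t/2,t]}\int_{[\varepsilon N,(1-\varepsilon)N]^d}u(s,y)^2\,\d y\,\d s$. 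Using $\sigma_{N,2}^2\asymp N^d$ from Lemma \ref{lem2.5} part (3), this is $\gtrsim N^{-d}\int_{[\varepsilon N,(1-\varepsilon)N]^d}u(t_0,y)^2\,\d y$ after also restricting $s$ to a sub-interval, i.e.\ a spatial average of $u^2$ over a box of volume of order $N^d$.

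The main obstacle—and the place requiring the most care—is turning the lower bound ``$A_N\gtrsim$ average of $u^2$'' into the uniform negative moment bound $\sup_N\E[A_N^{-p}]<\infty$; the event that $A_N$ is small is the event that $u(t,\cdot)^2$ is small throughout a large box, which should be very unlikely but is not immediate from the pointwise estimate $\E[u(t,x)^{-p}]<\infty$ alone. I would handle this by the standard device: write $A_N^{-1}\leq (B_N + R_N')^{-1}$ where $B_N\geq0$ is the main term and $R_N'$ collects errors, use $\E[A_N^{-p}]\leq 2^{p-1}\E[(2B_N)^{-p}\mathbf 1_{\{B_N\geq 2|R_N|\}}]+2^{p-1}\E[A_N^{-p}\mathbf 1_{\{B_N<2|R_N|\}}]$, bound the second term by Cauchy–Schwarz against $\|R_N/B_N\|$-type quantities (which are small), and for the first term note that $B_N\geq N^{-d}\int_Q u^2$ for a box $Q$ of volume $\asymp N^d$; by Jensen, $\left(N^{-d}\int_Q u^2\right)^{-p}\leq N^{-d}\int_Q u^{-2p}$ only if we are careful, so instead I would use that $\{B_N<\lambda\}\subseteq\{u(t,y_\ast)^2<C\lambda\ \text{for some }y_\ast\in Q\}$ is over-pessimistic and instead partition $Q$ into order-$N^d$ unit cells $Q_i$, observe $B_N\gtrsim\max_i\inf_{Q_i}$-type quantities are awkward, and finally settle on: $B_N\geq N^{-d}\sum_i\int_{Q_i}u^2\geq N^{-d}\sum_i m_i$ with $m_i:=\inf_{y\in Q_i}u(t,y)^2$ replaced by $\int_{Q_i}u^2$, and use stationarity of $u(t,\cdot)$ (Lemma \ref{lem2.3} part (1)) together with the exponential small-ball estimate \eqref{2.7} in the form: for each cell, $\P(\int_{Q_i}u(t,y)^2\,\d y<\varepsilon)$ decays faster than any power of $\varepsilon$, and then $\P(N^{-d}\sum_i\int_{Q_i}u^2<\lambda)$ is controlled because the sum of $\asymp N^d$ nonnegative terms is small only if a positive fraction of them are small. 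Making this last quantitative step rigorous (it needs only the marginal small-ball bound, not independence, via a first-moment/Markov argument on the number of ``small'' cells) is the crux; once it is in place, $\sup_N\E[A_N^{-p}]<\infty$ follows, and combining with the $O(N^{-d/2})$ control on $R_N$ via the splitting above yields \eqref{4.2}.
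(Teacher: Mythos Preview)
Your decomposition $D_{v_{N,2}}F_{N,2}=A_N+R_N$ over the full interval $[0,t]$ with $\|R_N\|_q=O(N^{-d/2})$ cannot close the argument. To obtain $\sup_N\E[(D_vF)^{-p}]<\infty$ you need a small-ball bound $\P(D_vF<\varepsilon)\le C\varepsilon^{q}$ (some $q>p$) \emph{uniform in $N$ and valid for all small $\varepsilon$}; but from $\P(D_vF<\varepsilon)\le(2\varepsilon)^q\E[A_N^{-q}]+\varepsilon^{-q}\E[|R_N|^q]\le C\varepsilon^q+C\varepsilon^{-q}N^{-dq/2}$ the second term explodes as $\varepsilon\to0$, and replacing it by the trivial bound $1$ still leaves $\int_0^{a}\varepsilon^{-p-1}\,\d\varepsilon=\infty$. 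The paper's key device is different: since $D_{s,y}u(t,x)\ge0$, one has $D_vF_N\ge$ its restriction to $s\in[t-\varepsilon^\alpha,t]$, and expanding $D_{s,y}u$ via (\ref{2.6}) splits this restriction as $I_1+I_2$ with $\|I_2\|_q\le C_{t,q}\varepsilon^{3\alpha/2}$ and $\E[I_1^{-q}]\le C_{t,q}\varepsilon^{-\alpha q}$. Choosing $\alpha=4/5$ balances the two and gives $\P(D_vF<\varepsilon)\le C_{t,q}\varepsilon^{q/5}$ for all $\varepsilon\in(0,1)$ and all $N\ge1$, which integrates. The smallness of the stochastic remainder must be in $\varepsilon$, not in $N$.

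For the main term, your cell-decomposition/Markov route is both unnecessary and, as written, incomplete: the cell small-ball bound $\P\bigl(\int_{Q_i}u^2<\varepsilon\bigr)\le C_p\varepsilon^p$ that you need itself requires Jensen. The direct tool---which you mention and then set aside---is simply Jensen for the convex map $x\mapsto x^{-q}$ applied to the bilinear integral itself: with $\mu(\d s\,\d y\,f(\d y')\,\d x_1\,\d x_2)=\sigma_{N,2}^{-2}p_{t-s}(x_1-y)p_{t-s}(x_2-y-y')$ on $[t_\alpha,t]\times\R^{2d}\times[0,N]^{2d}$ and total mass $M_N=\int_{t_\alpha}^t M_2(s,N)\,\d s$, one has $I_1=\int u(s,y)u(s,y+y')\,\d\mu$, hence $I_1^{-q}\le M_N^{-q-1}\int (u(s,y)u(s,y+y'))^{-q}\,\d\mu$ and $\E[I_1^{-q}]\le C_{t,q}M_N^{-q}$ by H\"older and Lemma~\ref{lem2.3}(4). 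There is no need to collapse the $f$-weighted double integral to a spatial average of $u^2$; indeed your step ``restrict $y'$ near $y$ and use $f\ge c$'' gives $\int\!\!\int_{|y-y'|\le\delta}\phi_N(s,y)\phi_N(s,y')u(s,y)u(s,y')\,\d y\,\d y'$, not $\int\phi_N^2u^2$. The lower bound $M_N\ge C_t\varepsilon^\alpha$ then follows from the Fourier computation $M_2(s,N)\ge C_t$ for $s\in(t/2,t)$ (Lemma~\ref{lem6.3}(1)).
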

\begin{prop}\label{prop4.3}
{\rm ({Case} 3)}. Let $F_{N,3}$ denote the spatial average defined in (\ref{1.9}) Then, for any $p\geq2$, $\gamma>5p$,
\begin{align}
\sup _{N \geq e} \mathrm{E}\left[\left|D_{v_{N,3}} F_{N,3}\right|^{-p}\right]\leq C_{t,p,\gamma}(\log N)^\gamma.
\label{4.3}
\end{align}
\end{prop}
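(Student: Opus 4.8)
The plan is to split $D_{v_{N,3}}F_{N,3}$ into a nonnegative principal term and a small remainder, to bound the principal term from below, and then to assemble a negative-moment estimate. From the mild formulation (\ref{1.5}) one has $F_{N,3}=\de(v_{N,3})$ with
$$v_{N,3}(s,y)=\frac{1}{\sigma_{N,3}}\,\mathbf{1}_{(0,t)}(s)\,U(s,y)\,\Phi_s(y),\qquad \Phi_s(y):=\int_0^N p_{s(t-s)/t}\!\left(y-\tfrac st x\right)\d x .$$
Inserting (\ref{2.11}) into $D_{s,y}F_{N,3}=\sigma_{N,3}^{-1}\int_0^N D_{s,y}U(t,x)\,\d x$ and separating the leading contribution $p_{s(t-s)/t}(y-\tfrac st x)U(s,y)$ from the stochastic-integral part, one obtains
$$D_{v_{N,3}}F_{N,3}=\|v_{N,3}\|_{\mathcal H}^{2}+\mathcal R_N,\qquad \|v_{N,3}\|_{\mathcal H}^{2}=\frac{1}{\sigma_{N,3}^{2}}\int_0^t\|\Phi_s(\cdot)\,U(s,\cdot)\|_{\mathcal H_0}^{2}\,\d s,$$
where $\mathcal R_N=\langle DF_{N,3}-v_{N,3},\,v_{N,3}\rangle_{\mathcal H}$ only involves the stochastic-integral part of $DF_{N,3}$. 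Since $\langle\cdot,\cdot\rangle_{\mathcal H_0}$ is nonnegative-definite and $U\ge 0$ (the PAM with $u_0=\de_0$ has a nonnegative solution), the principal term $\|v_{N,3}\|_{\mathcal H}^{2}$ is nonnegative; so it suffices to bound it below with overwhelming probability and to control $\mathcal R_N$.

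For the lower bound I would discard the part of the $s$-integral outside a fixed interval $[a,t]\subset(0,t)$. Because $\hat f\ge 0$ is nonzero with $\hat f(\R)<\infty$, the measure $f$ has a continuous density $\gamma$ with $\gamma(0)=(2\pi)^{-1}\hat f(\R)>0$, hence $\gamma\ge c_0>0$ on some $[-\rho,\rho]$, which gives
$$\|\Phi_s\,U(s,\cdot)\|_{\mathcal H_0}^{2}\ \ge\ c_0\iint_{|y-z|\le\rho}\Phi_s(y)\Phi_s(z)\,U(s,y)U(s,z)\,\d y\,\d z .$$
For $s\in[a,t]$ and $N$ large, $\Phi_s$ is bounded below by a positive constant on a ``bulk'' interval of length comparable to $N$; covering that interval by consecutive blocks of length $\rho$ and using the Cauchy--Schwarz inequality reduces the right-hand side to a quantity $\ge c\,N^{-1}W_s^{2}$, where $W_s:=\int_{\mathrm{bulk}_s}U(s,y)\,\d y$. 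Combined with $\sigma_{N,3}^{2}\le C_t\,N\log N$ (Lemma \ref{lem2.5}(4)), this yields $\|v_{N,3}\|_{\mathcal H}^{2}\ge c\,(N^{2}\log N)^{-1}\int_a^t W_s^{2}\,\d s$.

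The crux is then to prove, for every $m$, that $\E\big[(\int_a^t W_s^{2}\,\d s)^{-m}\big]\le C_{t,m}\,N^{-2m}(\log N)^{cm}$, which together with the previous step gives $\E[\|v_{N,3}\|_{\mathcal H}^{-2m}]\le C_{t,m}(\log N)^{c'm}$. By the Cauchy--Schwarz inequality in $s$ it is enough to control $Z_N:=\int_a^t W_s\,\d s$, whose mean is of order $N$. I would estimate $\P(Z_N<\de)$ in two regimes: near the mean, by the Chebyshev inequality with a high moment together with the bound $\|W_s-\E W_s\|_m\le C_{t,m}\sqrt{N\log N}$ coming from the BDG inequality and Lemma \ref{lem2.5}; and for $\de\downarrow 0$, by using $Z_N\ge\int_a^t\!\int_I U(s,y)\,\d y\,\d s$ for a fixed bounded interval $I\subset\mathrm{bulk}_s$ and the strict-positivity/small-ball estimates for the delta-initial PAM with spatially colored noise --- the same estimates that underlie Lemma \ref{lem2.4}(4) and are available precisely because $\hat f(\R)<\infty$ --- to get $\P(Z_N<\de)\le C_m\de^m$ for small $\de$. \emph{This is where the main difficulty lies}: one must make the small-ball estimate for the \emph{space--time integral} of $U$ quantitative and, in particular, uniform for $s$ in a compact interval bounded away from $0$, whereas Lemma \ref{lem2.4}(4) is stated only pointwise in $s$.

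For the remainder I would prove $\|\mathcal R_N\|_q\le C_{t,q}(\log N)^{a_0}N^{-1/2}$ by the BDG and Minkowski inequalities and the kernel bounds of Lemma \ref{lem2.4}(2)--(3), mirroring the computation behind the total-variation rate (\ref{1.12}). Then, on the event $G_N:=\{\,2|\mathcal R_N|\le\|v_{N,3}\|_{\mathcal H}^{2}\,\}$ one has $|D_{v_{N,3}}F_{N,3}|^{-p}\le 2^{p}\|v_{N,3}\|_{\mathcal H}^{-2p}$, so this contribution is $\le 2^p\,C_{t,p}(\log N)^{c'p}$; and $\P(G_N^{c})$ decays faster than any power of $N$, as one sees by combining the smallness of the moments of $\mathcal R_N$ with the lower bound on $\|v_{N,3}\|_{\mathcal H}^{2}$, so on $G_N^{c}$ the Cauchy--Schwarz inequality together with a crude a priori polynomial-in-$N$ bound on $\E[|D_{v_{N,3}}F_{N,3}|^{-2p}]$ (finite for each fixed $N$ by non-degeneracy of $F_{N,3}$) gives a negligible term. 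Adding the two contributions and keeping careful track of all logarithmic factors --- which are what is lost when passing from concentration of $W_s$ near its mean to a small-ball estimate valid at every scale --- produces $\sup_{N\ge e}\E[|D_{v_{N,3}}F_{N,3}|^{-p}]\le C_{t,p,\gamma}(\log N)^{\gamma}$ for any $\gamma>5p$.
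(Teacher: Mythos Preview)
Your proposal has the right high-level shape (principal part plus remainder), but it diverges from the paper at the decisive technical step and, as you yourself flag, runs into a real difficulty that the paper sidesteps entirely.

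The paper's argument rests on two ingredients you are missing.

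\textbf{(i) An $\varepsilon$-dependent time window.} Instead of a fixed interval $[a,t]$, the paper restricts the $s$-integral to $[t-\varepsilon^{\alpha},t]$ for a parameter $\alpha\in(0,1)$, writing $D_{v_{N,3}}F_{N,3}\ge I_1+I_2$ with $I_1\ge 0$ the leading term and $I_2$ the stochastic-integral part, both over this window. Then
\[
\P\big(D_{v_{N,3}}F_{N,3}<\varepsilon\big)\ \le\ (2\varepsilon)^{q}\,\E\big[I_1^{-q}\big]\ +\ \varepsilon^{-q}\,\E\big[|I_2|^{q}\big],
\]
and because the window has length $\varepsilon^{\alpha}$, BDG together with the kernel bounds of Lemma~\ref{lem2.4} gives $\E[|I_2|^{q}]\le C_{t,q}\,\varepsilon^{3\alpha q/2}$ directly; no good-event/bad-event split is needed.

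\textbf{(ii) Jensen's inequality for $I_1^{-q}$.} This is the key point. The term $I_1$ is a weighted average of $U(s,y)U(s,y+y')$ against a deterministic nonnegative measure of total mass $\int_{t_\alpha}^{t}M_3(s,N)\,\d s$. Convexity of $x\mapsto x^{-q}$ yields
\[
\E\big[I_1^{-q}\big]\ \le\ \Big(\int_{t_\alpha}^{t}M_3(s,N)\,\d s\Big)^{-q}\ \sup_{s\in[t/2,t],\,y,y'}\E\big[(U(s,y)U(s,y+y'))^{-q}\big],
\]
and the supremum is finite by stationarity and the \emph{pointwise} negative moments of Lemma~\ref{lem2.4}(4). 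With $M_3(s,N)\ge C_t/\log N$ from Lemma~\ref{lem6.4}(1), one gets $\E[I_1^{-q}]\le C_{t,q}(\log N)^{q}\varepsilon^{-\alpha q}$. Choosing $\alpha=4/5$ balances the two contributions, giving $\P(D_{v_{N,3}}F_{N,3}<\varepsilon)\le C_{t,q}(\log N)^{q}\varepsilon^{q/5}$; integrating and taking any $q>5p$ yields the exponent $\gamma>5p$.

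Your route instead requires a small-ball estimate for the space--time integral $Z_N=\int_a^t W_s\,\d s$, uniform over $s$ in a compact set, which is neither available in the paper nor an easy consequence of Lemma~\ref{lem2.4}(4); the Jensen trick makes this unnecessary. In addition, your handling of $G_N^{c}$ is circular: you invoke ``a crude a priori polynomial-in-$N$ bound on $\E[|D_{v_{N,3}}F_{N,3}|^{-2p}]$'', but that is essentially the statement being proved, and mere finiteness for each fixed $N$ does not control the growth in $N$. Finally, the claimed bound $\|\mathcal R_N\|_q\le C\,N^{-1/2}(\log N)^{a_0}$ for the \emph{full} remainder over $[0,t]$ is not obtainable by the paper's computation: the estimate there produces a factor $\int_0^r s^{-1}\,\d s$ once the time window is not shrunk, which diverges. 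The $\varepsilon$-window/Jensen scheme avoids all three issues.
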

For the sake of simplicity, we put (\ref{1.3})-(\ref{1.5}) together and recast the mild solution in cases 1-3 as
\begin{align}
\widetilde{u}(t,x)=1+\int_{(0,t)\times{\mathbb R}^{d}}\widetilde{G}(t-s,x-y)\widetilde{u}(s,y)\eta({\rm d}s,{\rm d}y)~~~{\rm for}~t>0~{\rm and}~x\in {\mathbb R}^{d},
\label{4.4}
\end{align}
where $\widetilde{u}(t,x)$, $\widetilde{G}(t-s,x-y)$ and $\eta$ depend on the situation of cases 1-3.
Similarly, (\ref{1.7})-(\ref{1.9}) can be recast as
\begin{align}
F_{N}:=\frac{1}{\sigma_{N}}\left(\int_{[0,N]^d}\left(\u(t,x)-1\right)\d x\right),~{\rm where}~{\sigma^2_{N}}:=\Var\left(\int_{[0,N]^d} \u(t,x)\d x\right).
\label{4.5}
\end{align}
Substituting (\ref{4.4}) into (\ref{4.5}), we have
\begin{align}
F_{N} & =\frac{1}{\sigma_{N}}\left(\int_{[0,N]^d} \int_{(0,t)\times{\mathbb R}^{d}} \G(t-s,x-y)\u(s, y) \eta(\d s, \d y) \d x\right) \notag\\
& =\int_{0}^{t} \int_{\mathbb{R}^d} \frac{1}{\sigma_{N}}\left(\int_{[0,N]^d}\G(t-s,x-y)\u(s, y) \d x\right) \eta(\d s, \d y)=\delta\left(v_{N}\right)\notag,
\end{align}
where
\begin{align}
v_{N}(s, y)=\mathbf{1}_{[0, t]}(s) \frac{1}{\sigma_{N}} \int_{[0,N]^d} \G(t-s,x-y)\u(s, y)  \d x.\label{4.6}
\end{align}
Consider the Malliavin derivative of $F_{N}$,
\begin{align*}
D_{s,y}F_{N} & =\frac{1}{\sigma_{N}}\int_{[0,N]^d} D_{s,y}\u(t,x) \d x.
\end{align*}
Since $\u(t,x)\geq0$ and $D_{s,y}\u(t,x)\geq0$ for almost all $0<s<t$ and $x,y\in\R^d$ (see \cite[Theorem 3.2]{Chen23} for case 2, others can be obtained similarly), together with (\ref{4.6}), we have
\begin{align}
&D_{v_{N}} F_{N}=\int_{0}^{t} \int_{\mathbb{R}^{2d}}  v_{N}(s, y+y')D_{s, y} F_{N} \d y f(\d y') \d s \notag\\
&=\frac{1}{\sigma_{N}^{2}} \int_{0}^{t}\d s \int_{\mathbb{R}^{2d}}\d y f(\d y')\int_{[0,N]^{2d}}\d x_1 \d x_2\notag\\
&~~~~~~~~~~~~~~~~~~~~~~~~~~~~~~~~~~~~~~~~~~\times D_{s, y} \u(t, x_{1}) \G(t-s,x_2-(y+y'))\u(s, y+y')\label{4.6.5}\\
&\geq\frac{1}{\sigma_{N}^{2}} \int_{t_\a}^{t}\d s \int_{\mathbb{R}^{2d}}\d y f(\d y')\int_{[0,N]^{2d}}\d x_1 \d x_2 D_{s, y} \u(t, x_{1}) \G(t-s,x_2-(y+y'))\u(s, y+y'),\notag
\end{align}
where $t_\a:=t-\e^\a$, for any $0<\a<1$ and $0<\e^\a<\frac{t}{2}$. Recall (\ref{2.5}), (\ref{2.6}) and (\ref{2.11}), thanks to a stochastic Fubini argument, we obtain
\begin{align}
&\frac{1}{\sigma_{N}^{2}}\int_{t_\a}^{t}\d s \int_{\mathbb{R}^{2d}}\d y f(\d y')\int_{[0,N]^{2d}}\d x_1 \d x_2 D_{s, y} \u(t, x_{1}) \G(t-s,x_2-(y+y'))\u(s, y+y')\notag\\
=&I_{1}+I_{2}\notag,
\end{align}
where
\begin{align}
I_{1}:=&\frac{1}{\sigma_{N}^{2}}\int_{t_\a}^{t}\d s \int_{\mathbb{R}^{2d}}\d y f(\d y')\int_{[0,N]^{2d}}\d x_1 \d x_2\notag\\
&~~~~~~~~~~~~~~~~~~\times\G(t-s,x_1-y) \G(t-s,x_2-(y+y'))\u(s, y)\u(s, y+y'),\label{4.7}\\
I_{2}:=&\frac{1}{\sigma_{N}^{2}}\int_{[t_\a,t]\times\R^d}\eta(\d r, \d z)\int_{t_\a}^r \d s \int_{\mathbb{R}^{2d}}\d y f(\d y')\int_{[0,N]^{2d}}\d x_1 \d x_2\notag\\
&~~~~~~~~~~~~~~~~~~\times\G(t-r,x_1-z)\G(t-s,x_2-(y+y'))\u(s, y+y')D_{s,y}\u(r,z)\label{4.8}.
\end{align}
Hence, by Chebyshev's inequality, for any $q\geq2$, we have
\begin{align}
\P\left(D_{v_{N}} F_{N}<\e\right)&\leq \P\left(I_1+I_2<\e\right)
\leq \P\left({I_1}<2\e\right)+\P\left({|I_2|}>\e\right)\notag\\
&\leq\left(2\e\right)^{q}\E\left[\vert{I_1}\vert^{-q}\right]+\e^{-q}\E\left[\vert{I_2}\vert^{q}\right].\label{4.9}
\end{align}
Now we begin to prove Propositions \ref{prop4.1}-\ref{prop4.3} by estimating $\E\left[\vert{I_1}\vert^{-q}\right]$ and $\E\left[\vert{I_2}\vert^{q}\right]$.

\begin{proof}[Proof of Proposition 4.1]
In case 1, we define
\begin{align}
\phi_{N}(s,y)=\int_{[0,N]}G_\a(t-s,x-y)\d x.\notag
\end{align}
Recall the definition of $I_1$ in (\ref{4.7}), thanks to Jensen's inequality we have
\begin{align*}
\E\left[\vert{I_1}\vert^{-q}\right]
&=\E\left[\left\vert\frac{1}{\sigma^2_{N,1}}\int_{t_\a}^{t}\int_{\mathbb{R}}\phi_N^2(s,y)u^2(s, y)\d y\d s \right\vert^{-q}\right]\notag\\
&=\left(\int_{t_\a}^tM_1(s,N)\d s\right)^{-q}
\E\left[\left\vert\frac{\int_{t_\a}^{t}\int_{\mathbb{R}} \phi_N^2(s,y)u^2(s, y)\d y\d s }{{\sigma^2_{N,1}}\int_{t_\a}^tM_1(s,N)\d s}\right\vert^{-q}\right]\notag\\
&\leq \left(\int_{t_\a}^tM_1(s,N)\d s\right)^{-q-1}\frac{1}{\sigma^2_{N,1}}
\int_{t_\a}^{t}\int_{\mathbb{R}}\phi_N^2(s,y) \E\left[u^{-2q}(s, y)\right]\d y\d s,\notag
\end{align*}
where
\begin{align}
M_1(s,N):=\frac{1}{\sigma^2_{N,1}}\int_{\mathbb{R}}  \phi_N^2(s,y) \d y. \label{4.10}
\end{align}
From Lemma \ref{lem2.2} part (1) and part (3),
\begin{align}
\sup_{s\in[t/2,t]}\E\left[u^{-2q}(s, y)\right]=\sup_{s\in[t/2,t]}\E\left[u^{-2q}(s, 0)\right]\leq C_{t,q}<\infty.\notag
\end{align}
Hence, we derive that
\begin{align}
\E\left[\vert{I_1}\vert^{-q}\right]\leq C_{t,q}\left(\int_{t_\a}^tM_1(s,N)\d s\right)^{-q}.\label{4.11}
\end{align}
For every real number $N>0$, define the following functions:
\begin{align}
I_{N}(x):=N^{-d}\mathbf{1}_{{[0,N]}^{d}}(x),~~~{\widetilde{I}}_{N}(x):=I_{N}(-x)~~~{\rm for}~x\in{\mathbb R}^{d}.\label{4.12}
\end{align}
Then, we obtain that the Fourier transform of $I_{N}\ast {\widetilde{I}}_{N}$ is ${2^{-d}}\prod_{j=1}^{d}\frac{1-\cos (Nz_j)}{(Nz_j)^2}$. Note that in this setting, the functions $I_{N}\ast {\widetilde{I}}_{N}$ and $G_\a(t, \bullet)$ belong to $L^2(\R)$. According to the semigroup property of $G_\a$ and Parseval's identity, we have that for any $N\geq1$,
\begin{align}
M_1(s,N)&=\frac{1}{\sigma^2_{N,1}}\int_{[0,N]^{2}}G_\a(2(t-s),x_2-x_1)\d x_1 \d x_2\notag\\
&=\frac{N^2}{\sigma^2_{N,1}}\int_{\R}\left(I_{N}\ast {\widetilde{I}}_{N}\right)(x)G_\a(2(t-s),x)\d x\notag\\
&=\frac{N^2}{\pi\sigma^2_{N,1}}\int_{\R}\frac{1-\cos(Nz)}{(Nz)^2}e^{-{2(t-s)}z^\a}\d z\notag\\
&=\frac{N}{\pi\sigma^2_{N,1}}\int_{\R}\frac{1-\cos z}{z^2}\exp\left(-\frac{2(t-s)z^\a}{N^2}\right)\d z\label{4.13}\\
&\geq\frac{N}{\pi\sigma^2_{N,1}}\int_{[1,2]}\frac{1-\cos z}{z^2}\exp\left(-\frac{2(t-s)z^\a}{N^2}\right)\d z \geq C_t,\label{4.14}
\end{align}
where we use Lemma \ref{lem2.5} part (2) in the last inequality. Then, from (\ref{4.11}) and (\ref{4.14}), we conclude that
\begin{align}
\E\left[\vert{I_1}\vert^{-q}\right]\leq C_{t,q}\left(\int_{t_\a}^t\d s\right)^{-q}\leq C_{t,q}\e^{-\a q}.\label{4.15}
\end{align}
Now, we estimate the term $\E\left[\vert{I_2}\vert^{q}\right]$. Recall the definition of $I_2$ in (\ref{4.8}), we apply the BDG inequality and Minkowski's inequality to find that
\begin{align}
\E\left[\vert{I_2}\vert^{q}\right]&\leq C_q\mathrm{E}\left[\left| \int_{t_{\alpha}}^{t} \int_{\mathbb{R}} \left(\frac{1}{\sigma^2_{N,1}} \int_{t_{\alpha}}^{r} \int_{\mathbb{R}} \phi_{N}(s, y) \phi_{N}(r, z) u(s,y) D_{s, y} u(r, z) \d y \d s\right)^{2} \d z \d r\right|^{\frac{q}{2}}\right] \notag\\
&=C_q\mathrm{E}\left[\left|\frac{1}{\sigma^4_{N,1}}\int_{t_{\alpha}}^{t}\d r \int_{[t_{\alpha},r]^2} \d s_1 \d s_2\int_{\mathbb{R}^{3}}\d y_1 \d y_2 \d z \phi_{N}(s_{1}, y_{1}) \phi_{N}(s_{2}, y_{2}) \phi_{N}^{2}(r, z)\right.\right. \notag\\
&~~~~~~\times u(s_1,y_1)u(s_2,y_2)D_{s_{1}, y_{1}} u(r, z) D_{s_{2}, y_{2}} u(r, z)  \Bigg|^{\frac{q}{2}}\Bigg] \notag\\
&\leq C_q\left(\frac{1}{\sigma^4_{N,1}}\int_{t_{\alpha}}^{t}\d r \int_{[t_{\alpha},r]^2} \d s_1 \d s_2\int_{\mathbb{R}^{3}}\d y_1 \d y_2 \d z \phi_{N}(s_{1}, y_{1}) \phi_{N}(s_{2}, y_{2}) \phi_{N}^{2}(r, z)\right. \notag\\
&~~~~~~\times \left\|u(s_1,y_1)u(s_2,y_2)D_{s_{1}, y_{1}} u(r, z) D_{s_{2}, y_{2}} u(r, z)\right\|_{\frac{q}{2}} \Bigg)^{\frac{q}{2}}\notag\\
&\leq C_{t,q}\left(\frac{1}{\sigma^4_{N,1}}\int_{t_{\alpha}}^{t}\d r \int_{[t_{\alpha},r]^2} \d s_1 \d s_2\int_{\mathbb{R}^{3}}\d y_1 \d y_2 \d z \phi_{N}(s_{1}, y_{1}) \phi_{N}(s_{2}, y_{2}) \phi_{N}^{2}(r, z)\right. \notag\\
&~~~~~~\times (r-s_1)^{-\frac{1}{2\a}}G_\a^{\frac{1}{2}}(r-s_1,z-y_1)
(r-s_2)^{-\frac{1}{2\a}}G_\a^{\frac{1}{2}}(r-s_2,z-y_2)\Bigg)^{\frac{q}{2}},\label{4.16}
\end{align}
where we use Lemma \ref{lem2.2} part (1) and part (2) in the last inequality. Notice that $\phi_N(s,y)\leq1$ by Lemma \ref{lem6.1} part (3). Thanks to Lemma \ref{lem6.1} part (4), we obtain that for any $N\geq1$,
\begin{align}
&\frac{1}{\sigma^4_{N,1}}\int_{\mathbb{R}^{3}}\phi_{N}^{2}(r, z)\prod_{i=1,2}\phi_{N}(s_{i}, y_{i})
(r-s_i)^{-\frac{1}{2\a}}G_\a^{\frac{1}{2}}(r-s_i,z-y_i)\d y_1 \d y_2 \d z\notag\\
&\leq\frac{1}{\sigma^4_{N,1}}\int_{\mathbb{R}^{3}}\phi_{N}^{2}(r, z)\prod_{i=1,2}(r-s_i)^{-\frac{1}{2\a}}G_\a^{\frac{1}{2}}(r-s_i,z-y_i)\d y_1 \d y_2 \d z\notag\\
&\leq \frac{C}{\sigma^4_{N,1}}\int_{\mathbb{R}}\phi_{N}^{2}(r, z)\d z= \frac{CN}{\pi\sigma^4_{N,1}}\int_{\R}\frac{1-\cos z}{z^2}\exp\left(-\frac{2(t-r)z^\a}{N^2}\right)\d z\leq C_t,\label{4.17}
\end{align}
where the equality holds by (\ref{4.13}), and we use $\int_{\mathbb{R}}\frac{1-\cos z}{z^2}\d z=\pi$ and Lemma \ref{lem2.5} part (2) in the last inequality. Then, substituting (\ref{4.17}) into (\ref{4.16}), we conclude that
\begin{align}
\E\left[\vert{I_2}\vert^{q}\right]\leq C_{t,q}\left(\int_{t_{\alpha}}^{t}\d r \int_{[t_{\alpha},r]^2} \d s_1 \d s_2 \right)^{\frac{q}{2}}\leq C_{t,q}\e^{\frac{3\a q}{2}}.\label{4.18}
\end{align}
Choose $\a=4/5$. (\ref{4.9}), (\ref{4.15}) and (\ref{4.18}) together imply that
\begin{align}
\sup_{N\geq 1}\P\left(D_{v_{N,1}} F_{N,1}<\e\right)\leq C_{t,q}\e^{\frac{q}{5}}.\label{4.19}
\end{align}
Therefore, we finally get
\begin{align}
\sup_{N\geq 1}\E\left[\left(D_{v_{N,1}} F_{N,1}\right)^{-p}\right]
&=\sup_{N\geq1}\int_0^\infty p\e^{-p-1}\P\left(D_{v_{N,1}} F_{N,1}<\e\right)\d \e\notag\\
&\leq1+C_{t,q}p\int_0^1\e^{-p-1+q/5}\d \e<\infty,\label{4.20}
\end{align}
for all $q>5p$. This proves our result.
\end{proof}

\begin{proof}[Proof of Proposition 4.2]
In case 2, recall the definition of $I_1$ in (\ref{4.7}), thanks to Jensen's inequality, we have
\begin{align}
\E\left[\vert{I_1}\vert^{-q}\right]
&=\E\left[\left\vert\frac{1}{\sigma^2_{N,2}}
\int_{t_\a}^{t}\d s\int_{\mathbb{R}^{2d}}\d y f(\d y')\int_{[0,N]^{2d}}\d x_1 \d x_2
\right.\right.\notag\\
&~~~~~~\times p_{t-s}(x_1-y)p_{t-s}(x_2-(y+y'))u(s,y)u(s,y+y') \Bigg|^{-q}\Bigg] \notag\\
&\leq\left(\int_{t_\a}^tM_2(s,N)\d s\right)^{-q-1}
\int_{t_\a}^{t}\d s\int_{\mathbb{R}^{2d}}\d yf(\d y')\int_{[0,N]^{2d}}\d x_1 \d x_2
\notag\\
&~~~~~~\times \frac{1}{\sigma^2_{N,2}}p_{t-s}(x_1-y)p_{t-s}(x_2-(y+y')) \E\left[u^{-q}(s, y)u^{-q}(s,y+y')\right],  \notag
\end{align}
where
\begin{align}
M_2(s,N):=\frac{1}{\sigma^2_{N,2}}\int_{\mathbb{R}^{2d}}\d y f(\d y')\int_{[0,N]^{2d}}\d x_1 \d x_2p_{t-s}(x_1-y)p_{t-s}(x_2-(y+y')).\label{4.21}
\end{align}
Thanks to H$\rm \ddot{o}$lder's inequality, Lemma \ref{lem2.3} part (1) and part (4), we can see that
\begin{align}
\sup_{s\in[t/2,t]}\E\left[u^{-q}(s, y)u^{-q}(s,y+y')\right]&\leq\sup_{s\in[t/2,t]}\left[\E\left(u^{-2q}(s, y)\right)\E\left(u^{-2q}(s, y+y')\right)\right]^{\frac{1}{2}}\notag\\
&\leq \sup_{s\in[t/2,t]}\E\left[u^{-2q}(s, 0)\right]\leq C_{t,q}<\infty.\label{4.22}
\end{align}
This, together with Lemma \ref{lem6.3} part (1), concludes that
\begin{align}
\E\left[\vert{I_1}\vert^{-q}\right]\leq C_{t,q}\left(\int_{t_\a}^tM_2(s,N)\d s\right)^{-q}
\leq C_{t,q}\left(\int_{t_\a}^t\d s\right)^{-q}
\leq C_{t,q}\e^{-\a q},\label{4.23}
\end{align}
for any $N\geq1$. As for $\E\left[\vert{I_2}\vert^{q}\right]$, recall (\ref{4.8}), using the BDG inequality and Minkowski's inequality, we can write
\begin{align}
\E\left[\vert{I_2}\vert^{q}\right]%&\leq C_q\mathrm{E}\left[\left|\int_{t_{\alpha}}^{t}\d r \int_{[t_{\alpha},r]^2} \d s_1 \d s_2\int_{\mathbb{R}^{6d}}\d y_1 f(\d y_1')\d y_2 f(\d y_2')\d z f(\d z')\int_{[0,N]^{4d}}\d x_1 \d x_1'\d x_2 \d x_2'\right.\right. \notag\\
%&~~~~~~\times \frac{1}{\sigma^4_{N,2}} p_{t-r}(x_1-z)p_{t-r}(x_1'-(z+z'))p_{t-s_1}(x_2-(y_1+y_1'))p_{t-s_2}(x_2-(y_2+y_2'))\notag\\
%&~~~~~~\times u(s_1,y_1+y_1')u(s_2,y_2+y_2')D_{s_{1}, y_{1}} u(r, z) D_{s_{2}, y_{2}} u(r, z+z')
%\Bigg|^{\frac{q}{2}}\Bigg] \notag\\
&\leq C_{q}\left(\int_{t_{\alpha}}^{t}\d r \int_{[t_{\alpha},r]^2} \d s_1 \d s_2\int_{\mathbb{R}^{6d}}\d y_1 f(\d y_1')\d y_2 f(\d y_2')\d z f(\d z')\int_{[0,N]^{4d}}\d x_1 \d x_1'\d x_2 \d x_2'\right. \notag\\
&~~~~~~\times \frac{1}{\sigma^4_{N,2}} p_{t-r}(x_1-z)p_{t-r}(x_1'-(z+z'))p_{t-s_1}(x_2-(y_1+y_1'))p_{t-s_2}(x_2'-(y_2+y_2'))\notag\\
&~~~~~~\times \|u(s_1,y_1+y_1')u(s_2,y_2+y_2')D_{s_{1}, y_{1}} u(r, z) D_{s_{2}, y_{2}} u(r, z+z')\|_{\frac{q}{2}}\Bigg)^{\frac{q}{2}} \notag\\
&\leq C_{t,q}\left(\int_{t_{\alpha}}^{t}\d r \int_{[t_{\alpha},r]^2} \d s_1 \d s_2\int_{\mathbb{R}^{6d}}\d y_1 f(\d y_1')\d y_2 f(\d y_2')\d z f(\d z')\int_{[0,N]^{4d}}\d x_1 \d x_1'\d x_2 \d x_2'\right. \notag\\
&~~~~~~\times \frac{1}{\sigma^4_{N,2}} p_{t-r}(x_1-z)p_{t-r}(x_1'-(z+z'))p_{t-s_1}(x_2-(y_1+y_1'))p_{t-s_2}(x_2'-(y_2+y_2'))\notag\\
&~~~~~~\times p_{r-s_1}(z-y_1)p_{r-s_2}(z+z'-y_2)\Bigg)^{\frac{q}{2}},\label{4.24}
\end{align}
where we use Lemma \ref{lem2.3} part (2) in the last inequality. Then, we proceed in the following order:
integrating in $y_1$, $y_2$ and using the semigroup property of the heat kernel; integrating $x_2$, $x_2'$ on $\R^d$, to obtain that for any $N\geq1$,
\begin{align}
\E\left[\vert{I_2}\vert^{q}\right]&\leq C_{t,q}
\left(\frac{1}{\sigma^4_{N,2}} \int_{t_{\alpha}}^{t}\d r \int_{[t_{\alpha},r]^2} \d s_1 \d s_2
\int_{\mathbb{R}^{4d}}f(\d y_1')f(\d y_2')\d z f(\d z')\int_{[0,N]^{2d}}\d x_1 \d x_1'\right.\notag\\
&~~~~~~\times p_{t-r}(x_1-z)p_{t-r}(x_1'-(z+z'))\Bigg)^{\frac{q}{2}}\notag\\
%&\leq C_{t,q}
%\left(\frac{\e^2(f(\R^d))^2}{\sigma^4_{N,2}} \int_{t_{\alpha}}^{t}\d r
%\int_{\mathbb{R}^{2d}}\d z f(\d z')\int_{[0,N]^{2d}}\d x_1 \d x_1'
%p_{t-r}(x_1-z)p_{t-r}(x_1'-(z+z'))\right)^{\frac{q}{2}}\notag\\
&\leq C_{t,q}\left(\frac{\e^{2\a}(f(\R^d))^2}{\sigma^2_{N,2}} \int_{t_{\alpha}}^{t}M_2(r,N)\d r\right)^{\frac{q}{2}}\leq C_{t,q} \e^{\frac{3\a q}{2}},\label{4.25}
\end{align}
where we use Lemma \ref{lem6.3} part (2) and Lemma \ref{lem2.5} part (3) in the last inequality. Choose $\a=4/5$. Similar to the argument in (\ref{4.20}), from (\ref{4.9}), (\ref{4.23}) and (\ref{4.25}), we finally prove the result.
\end{proof}

\begin{proof}[Proof of Proposition 4.3]
In case 3, we first estimate the term $\E\left[\vert{I_1}\vert^{-q}\right]$. Similar to the proof of Proposition 4.2, we apply Jensen's inequality, Lemma \ref{lem2.4} part (1) and part (4) to see that
\begin{align}
\E\left[\vert{I_1}\vert^{-q}\right]\leq C_{t,q}\left(\int_{t_\a}^tM_3(s,N)\d s\right)^{-q},
\end{align}
where
\begin{align}
M_3(s,N):=\frac{1}{\sigma^2_{N,3}}\int_{\mathbb{R}^2}\d y f(\d y')\int_{[0,N]^{2}}\d x_1 \d x_2p_{\frac{s(t-s)}{t}}(y-\frac{s}{t}x_1)p_{\frac{s(t-s)}{t}}(y+y'-\frac{s}{t}x_2).
\label{4.27}
\end{align}
From Lemma \ref{lem6.4} part (1), we conclude that
\begin{align}
\E\left[\vert{I_1}\vert^{-q}\right]
\leq C_{t,q}(\log N)^{q}\left(\int_{t_\a}^t\d s\right)^{-q}
\leq C_{t,q}\e^{-\a q}(\log N)^q,\label{4.28}
\end{align}
for any $N\geq e$. Next, recall (\ref{4.8}), Lemma \ref{lem2.4} part (1) and part (3). By the BDG inequality and Minkowski's inequality,
\begin{align}
\E[\vert{I_2}&\vert^{q}]\leq C_{t,q}\left(\int_{t_{\alpha}}^{t}\d r \int_{[t_{\alpha},r]^2} \d s_1 \d s_2\int_{\mathbb{R}^{6}}\d y_1 f(\d y_1')\d y_2 f(\d y_2')\d z f(\d z')\int_{[0,N]^{4}}\d x_1 \d x_1'\d x_2 \d x_2'\right. \notag\\
\times& \frac{1}{\sigma^4_{N,3}} p_{\frac{r(t-r)}{t}}\left(z-\frac{r}{t}x_1\right)
p_{\frac{r(t-r)}{t}}\left(z+z'-\frac{r}{t}x_1'\right)
p_{\frac{s_1(t-s_1)}{t}}\left(y_1+y_1'-\frac{s_1}{t}x_2\right)
\notag\\
\times& p_{\frac{s_2(t-s_2)}{t}}\left(y_2+y_2'-\frac{s_2}{t}x_2'\right)
p_{\frac{s_1(r-s_1)}{r}}\left(y_1-\frac{s_1}{r}z\right)
p_{\frac{s_2(r-s_2)}{r}}\left(y_2-\frac{s_2}{r}(z+z')\right)
\Bigg)^{\frac{q}{2}}.\label{4.29}
\end{align}
Then, we use the same arguments in proving (\ref{4.25}) and apply the following identity in integrating $x_2$, $x_2'$:
\begin{align}
p_t(\theta x)=\theta^{-d}p_{t/\theta^2}(x), {\rm~for~all~}x\in\R~{\rm and}~t,\sigma>0.\label{4.30}
\end{align}
As a consequence, for any $N\geq e$,
\begin{align}
\E\left[\vert{I_2}\vert^{q}\right]
\leq C_{t,q}\left(\frac{(f(\R))^2}{\sigma^2_{N,3}} \int_{t_{\alpha}}^{t}M_3(r,N)\d r\int_{[t_{\alpha},r]^2} \frac{1}{s_1s_2}\d s_1 \d s_2\right)^{\frac{q}{2}}\leq C_{t,q} \e^{\frac{3\a q}{2}},\label{4.31}
\end{align}
where the last inequality holds by $t_\a>t/2$ and Lemma \ref{lem6.4} part (2). Choose $\a=4/5$. Combining (\ref{4.9}), (\ref{4.28}) and (\ref{4.31}), we finally get
\begin{align}
\E\left[\left(D_{v_{N,3}} F_{N,3}\right)^{-p}\right]
\leq1+C_{t,q}\left(\log N\right)^{q}p\int_0^1\e^{-p-1+q/5}\d \e\leq C_{t,q}\left(\log N\right)^{q},\label{4.32}
\end{align}
for all $q>5p$ and $N\geq e$. This proves the result.
\end{proof}
\section{Proofs of Theorems 1.1-1.3}
In this section, we will establish upper bounds for the uniform distance of densities and prove Theorems \ref{thm1.1}-\ref{thm1.3} by analyzing the behavior of $\|D_{v_{N}}(D_{v_{N}}F_{N})\|_2$.

In cases 1-3, recall (\ref{4.6.5}) that
\begin{align}
D_{v_{N}} F_{N}=\frac{1}{\sigma_{N}^{2}} &\int_{0}^{t}\d s \int_{\mathbb{R}^{2d}}\d y f(\d y')\int_{[0,N]^{2d}}\d x_1 \d x_2 \notag\\
&~~~~~~\times D_{s, y} \u(t, x_{1}) \G(t-s,x_2-(y+y'))\u(s, y+y').\label{5.1}
\end{align}
Applying the Malliavin derivative operator, we have
\begin{align}
D_{r, z}(D_{v_{N}}F_{N})&= \frac{1}{\sigma_{N}^{2}} \int_{0}^{t}\d s\int_{\mathbb{R}^{2d}}\d y f(\d y')\int_{[0,N]^{2d}}\d x_1 \d x_2\G(t-s,x_2-(y+y'))\notag\\
&~~~~~~\times \left(D_{s, y}\u(t, x_{1})
 D_{r, z} \u(s,y+y')
+\u(s,y+y')D_{r,z}D_{s,y}\u(t,x_1)\right).\notag
\end{align}
Recall (\ref{4.6}), we obtain
\begin{align}
D_{v_{N}}(D_{v_{N}} &F_{N})= \frac{1}{\sigma_{N}^{3}} \int_{0}^{t}\d r \int_r^t\d s\int_{\mathbb{R}^{4d}}\d z f(\d z')\d y f(\d y')\int_{[0,N]^{3d}}\d x_1 \d x_2 \d x_3\notag\\
&\times \G(t-s,x_2-(y+y'))\G(t-r,x_3-(z+z'))\u(r,z+z')\notag\\
&\times \left(D_{s, y}\u(t, x_{1})
 D_{r, z} \u(s,y+y')
+2\u(s,y+y')D_{r,z}D_{s,y}\u(t,x_1)\right).\label{5.2}
\end{align}
Now we begin to prove Theorems \ref{thm1.1}-\ref{thm1.3}.

\begin{proof}[Proof of Theorem 1.1.]
In case 1, according to the proof of \cite[Theorem 2.3]{Ass22}, it is easy to see that
\begin{align}
\left\|1-D_{v_{N,1}} F_{N,1}\right\|_{2} \leq \frac{C_{t}}{\sqrt{N}}.\label{5.3}
\end{align}
Recall Lemma \ref{lem2.1} and Lemma \ref{lem2.5} part (1), it remains to estimate the term $\|D_{v_{N,1}}(D_{v_{N,1}}F_{N,1})\|_2$. According to H${\rm\ddot{o}}$lder's inequality, Proposition \ref{prop3.1}, Lemma \ref{lem2.2} part (1) and part (2), we have
\begin{align}
&\left\|u(r,z)\left(D_{s, y}u(t, x_{1})
 D_{r, z}u(s,y)
+2u(s,y)D_{r,z}D_{s,y}u(t,x_1)\right)\right\|_2\notag\\
&~~~~~~\leq C_t
(t-s)^{-\frac{1}{2\alpha}}G^{\frac{1}{2}}_{\alpha}(t-s,x_1-y)
(s-r)^{-\frac{1}{2\alpha}}G^{\frac{1}{2}}_{\alpha}(s-r,y-z).\label{5.4}
\end{align}
Hence, recall (\ref{5.2}), thanks to Minkowski's inequality,
\begin{align}
&\|D_{v_{N,1}}(D_{v_{N,1}}F_{N,1})\|_2\leq\frac{C_t}{\sigma_{N,1}^{3}} \int_{0}^{t}\d r \int_r^t\d s\int_{\mathbb{R}^{2}}\d z\d y \int_{[0,N]^{3}}\d x_1 \d x_2 \d x_3\notag\\
&~~~\times G_\a(t-s,x_2-y)G_\a(t-r,x_3-z)(t-s)^{-\frac{1}{2\alpha}}G^{\frac{1}{2}}_{\alpha}(t-s,x_1-y)
(s-r)^{-\frac{1}{2\alpha}}G^{\frac{1}{2}}_{\alpha}(s-r,y-z).\notag
\end{align}
Then, we proceed in the following order: integrating $x_3$ on $\R$ by using Lemma \ref{lem6.1} part (3); integrating $x_1$ and $z$ on $\R$ by using Lemma \ref{lem6.1} part (4); integrating $y$ on $\R$, to obtain that for any $N\geq 1$,
\begin{align}
\|D_{v_{N,1}}(D_{v_{N,1}}F_{N,1})\|_2\leq\frac{C_t}{\sigma_{N,1}^{3}} \int_{0}^{t}\d r \int_r^t\d s \int_{[0,N]}\d x_2
\leq\frac{C_t}{\sqrt{N}},
\end{align}
where the last inequality holds by Lemma \ref{lem2.5} part (2). This proves Theorem \ref{thm1.1}.
\end{proof}

\begin{proof}[Proof of Theorem 1.2.]
In case 2, from the proof of \cite[Theorem 2.5]{Chen23}, we have
\begin{align}
\left\|1-D_{v_{N,2}} F_{N,2}\right\|_{2} \leq \frac{C_{t}}{(\sqrt{N})^{d}}.\label{5.6}
\end{align}
Thanks to Lemma \ref{lem2.1} and Lemma \ref{lem2.5} part (1), we estimate the term $\|D_{v_{N,2}}(D_{v_{N,2}}F_{N,2})\|_2$ in the following. According to H${\rm\ddot{o}}$lder's inequality, Lemma \ref{lem2.3} part (2) and part (3), we have
\begin{align}
&\left\|u(r,z+z')\left(D_{s, y}u(t, x_{1})
 D_{r, z}u(s,y+y')
+2u(s,y+y')D_{r,z}D_{s,y}u(t,x_1)\right)\right\|_2\notag\\
&~~~~~~\leq C_t p_{t-s}(x_1-y)p_{s-r}(y+y'-z)+C_t p_{t-s}(x_1-y)p_{s-r}(y-z).\label{5.7}
\end{align}
Then, recall (\ref{5.2}), we have
\begin{align}
\|D_{v_{N,2}}(D_{v_{N,2}}F_{N,2})\|_2\leq\Phi_{N,1}+\Phi_{N,2},\label{5.8}
\end{align}
where
\begin{align}
\Phi_{N,1}&=\frac{C_t}{\sigma_{N,2}^{3}} \int_{0}^{t}\d r \int_r^t\d s\int_{\mathbb{R}^{4d}}\d z f(\d z')\d y f(\d y')\int_{[0,N]^{3d}}\d x_1 \d x_2 \d x_3\notag\\
&~~~~~~\times p_{t-s}(x_2-(y+y'))p_{t-r}(x_3-(z+z'))p_{t-s}(x_1-y)p_{s-r}(y+y'-z),\notag\\
\Phi_{N,2}&=\frac{C_t}{\sigma_{N,2}^{3}} \int_{0}^{t}\d r \int_r^t\d s\int_{\mathbb{R}^{4d}}\d z f(\d z')\d y f(\d y')\int_{[0,N]^{3d}}\d x_1 \d x_2 \d x_3\notag\\
&~~~~~~\times p_{t-s}(x_2-(y+y'))p_{t-r}(x_3-(z+z'))p_{t-s}(x_1-y)p_{s-r}(y-z).\notag
\end{align}
Next, for both $\Phi_{N,1}$ and $\Phi_{N,2}$, we first integrate in $z$ and use the semigroup property of the heat kernel, then integrate $x_3$ on $\R^d$ to obtain that for $i=1,2$,
\begin{align}
\Phi_{N,i}&\leq
\frac{C_t}{\sigma_{N,2}^{3}} \int_{0}^{t}\d r \int_r^t\d s\int_{\mathbb{R}^{3d}}f(\d z')\d y f(\d y')\int_{[0,N]^{2d}}\d x_1 \d x_2p_{t-s}(x_2-(y+y'))p_{t-s}(x_1-y)\notag\\
&=\frac{C_tf(\R^d)}{\sigma_{N,2}} \int_{0}^{t}\d r \int_r^tM_2(s,N)\d s\leq\frac{C_t}{(\sqrt{N})^{d}},\label{5.9}
\end{align}
where $M_2(s,N)$ is defined in (\ref{4.21}), and we use Lemma \ref{lem6.3} part (2) and Lemma \ref{lem2.5} part (3) in the last inequality. Hence, recall (\ref{5.8}), we finish the proof.
\end{proof}

\begin{proof}[Proof of Theorem 1.3.]In case 3, it follows from the proof of \cite[Theorem 1.3]{Davar21} that
\begin{align}
\left\|1-D_{v_{N,3}} F_{N,3}\right\|_{2} \leq \frac{C_{t}\sqrt{\log N}}{\sqrt{N}}.\label{5.10}
\end{align}
Since Lemma \ref{lem2.4} part (1) and part (3) hold, we repeat the computation in the proof of Theorem \ref{thm1.2} to find that
\begin{align}
\|D_{v_{N,3}}(D_{v_{N,3}}F_{N,3})\|_2\leq\Psi_{N,1}+\Psi_{N,2},\label{5.11}
\end{align}
where
\begin{align}
\Psi_{N,1}&=\frac{C_t}{\sigma_{N,3}^{3}} \int_{0}^{t}\d r \int_r^t \d s\int_{\mathbb{R}^{4}}\d z f(\d z')\d y f(\d y')\int_{[0,N]^{3}}\d x_1 \d x_2 \d x_3 p_{\frac{s(t-s)}{t}}\left(y+y'-\frac{s}{t}x_2\right)\notag\\
&~~~~~~\times p_{\frac{r(t-r)}{t}}
\left(z+z'-\frac{r}{t}x_3\right)p_{\frac{s(t-s)}{t}}\left(y-\frac{s}{t}x_1\right)
p_{\frac{r(s-r)}{s}}\left(z-\frac{r}{s}(y+y')\right),\notag\\
\Psi_{N,2}&=\frac{C_t}{\sigma_{N,3}^{3}} \int_{0}^{t}\d r \int_r^t\d s\int_{\mathbb{R}^{4}}\d z f(\d z')\d y f(\d y')\int_{[0,N]^{3}}\d x_1 \d x_2 \d x_3p_{\frac{s(t-s)}{t}}\left(y+y'-\frac{s}{t}x_2\right)\notag\\
&~~~~~~\times p_{\frac{r(t-r)}{t}}
\left(z+z'-\frac{r}{t}x_3\right)p_{\frac{s(t-s)}{t}}\left(y-\frac{s}{t}x_1\right)
p_{\frac{r(s-r)}{s}}\left(z-\frac{r}{s}y\right).\notag
\end{align}
Since $1/s$ is not integrable on $(0,t)$, we can not apply the same argument as Theorem \ref{thm1.2} in the following estimate. Hence, we first integrate $x_1$ and $x_2$ on $\R$ for $\Psi_{N,1}$ and $\Psi_{N,2}$, respectively, by using (\ref{4.30}). Then for both $\Psi_{N,1}$ and $\Psi_{N,2}$, owing to the semigroup property and (\ref{4.30}), we integrate in the variables $y$ and $z$ to obtain that for $i=1,2$,
\begin{align}
\Psi_{N,i}&\leq\frac{C_t}{\sigma_{N,3}^{3}} \int_{0}^{t}\d r \int_r^t\frac{1}{s}\d s\int_{\mathbb{R}^{2}}f(\d z')f(\d y')\int_{[0,N]^{2}}\d x_1 \d x_2\notag\\
&~~~~~~\times
p_{[({r(s-r)}/{s})(s^2/r^2)+s(t-s)/t](r^2/s^2)+r(t-r)/t}
\left(z'-\frac{r}{t}(x_2-x_1)\right)\notag\\
&=\frac{C_tf(\R)}{\sigma_{N,3}^{3}} \int_{0}^{t}\d r \int_r^t\frac{1}{s}\d s\int_{\mathbb{R}}f(\d z')\int_{[0,N]^{2}}\d x_1 \d x_2p_{\frac{2r(t-r)}{t}}
\left(z'-\frac{r}{t}(x_2-x_1)\right)\notag\\
&=\frac{C_tN^2f(\R)}{\sigma_{N,3}^{3}} \int_{0}^{t}\d r \int_r^t\frac{1}{s}\d s
\int_{\R} \d x \left(I_{N}\ast {\widetilde{I}}_{N}\right)(x)\left(p_{2r(t-r)/t}\ast f\right)\left(\frac{r}{t}x\right)\notag\\
&\leq\frac{C_tN(f(\R))^2}{\sigma_{N,3}^{3}} \int_{0}^{t}\frac{1}{r}\d r \int_r^t\frac{1}{s}\d s\int_{\R} \frac{1-\cos z}{z^2}e^{-\frac{t(t-r)z^2}{rN^2}}\d z,\label{5.12}
\end{align}
where the last inequality holds by the proof of Lemma \ref{lem6.4}. Then, integrating in the variable $s$ and making a change of variable $\sigma=(t-r)/r$ yields
\begin{align}
\Psi_{N,i}&\leq\frac{C_tN}{\sigma_{N,3}^{3}} \int_{\R}\frac{1-\cos z}{z^2}\int_{0}^{\infty}
\frac{\log (1+\sigma) }{1+\sigma}e^{-\frac{t\sigma z^2}{N^2}}\d \sigma\d z\notag\\
&=\frac{C_tN}{2\sigma_{N,3}^{3}} \int_{\R}\frac{1-\cos z}{z^2}\left(\int_{0}^{\infty}
e^{-\frac{t\sigma z^2}{N^2}}\d (\log(1+\sigma))^2\right)\d z\notag\\
&=\frac{C_tN}{2\sigma_{N,3}^{3}} \int_{\R}\frac{1-\cos z}{z^2}\left(\int_{0}^{\infty}
\frac{tz^2}{N^2}e^{-\frac{t\sigma z^2}{N^2}}(\log(1+\sigma))^2\d \sigma\right)\d z,\notag
\end{align}
where we integrate by parts in the second equality. Next, we make a change of variable $\theta=tz^2\sigma/N^2$ to obtain that
\begin{align}
\Psi_{N,i}&\leq\frac{C_tN}{\sigma_{N,3}^{3}} \int_{\R}\frac{1-\cos z}{z^2}\left(\int_{0}^{\infty}
e^{-\theta}\left(\log\left(1+\frac{N^2\theta}{tz^2}\right)\right)^2\d \theta\right)\d z\notag\\
&\leq\frac{C_tN(\log N)^2}{\sigma_{N,3}^{3}}\leq\frac{C_t\sqrt{\log N}}{\sqrt{N}},\label{5.13}
\end{align}
where we use Lemma \ref{lem6.5} in the second inequality, and the last inequality holds by Lemma \ref{lem2.5} part (4). Finally, recall Lemma \ref{lem2.1}, we combine (\ref{5.10}), (\ref{5.11}), Proposition \ref{prop4.3} and Lemma \ref{lem2.5} part (1) to finish the proof.
\end{proof}
\begin{rem}
Unlike the proof of nonlinear case \cite[Proof of Theorem 1.1]{Ku22}, when dealing with (\ref{5.2}), we apply moment inequalities of Malliavin derivative of the solutions directly rather than using the expansion of the Malliavin derivative such as (\ref{2.5}) and estimating the stochastic integral. The similar technique can be used to prove Theorem 1.2 in \cite{Ku22} to simplify the computations.
\end{rem}
\section{Appendix}

\renewcommand{\thethm}{A.\arabic{lem}}
\newcounter{lem}
We first introduce some properties of the Green kernel $G_{\a}(t,x)$ which could be found in \cite{Ass22}.
\setcounter{lem}{1}
\begin{lem}\label{lem6.1}
Let $G_{\a}(t,x)$ denote the Green kernel defined in $(\ref{1.3})$. Then,\\
${\rm (1)}$ $G^2_{\a}(t,x)\leq Ct^{-\frac{1}{\a}}G_{\a}(t,x)$, for all $(t,x)\in \R^+\times\R$.\\
${\rm (2)}$ {\rm(Semigroup property.)} $\int_{\mathbb{R}} G_{\alpha}(t, z) G_{\alpha}(s, x-z) \d z=G_{\alpha}(t+s, x)$, for $t,s>0$ and $x \in \mathbb{R}$.\\
${\rm (3)}$ $\int_{\mathbb{R}} G_{\alpha}(t, x) \d x=1$, for every $t>0$.\\
${\rm (4)}$ $\int_{\mathbb{R}} G_{\alpha}^{\frac{1}{2}}(t, x) \d x=C t^{\frac{1}{2\a}}$, for every $t>0$.
\end{lem}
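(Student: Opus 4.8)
The plan is to deduce all four statements from the defining identity $\widehat{G_\a(t,\cdot)}(\xi)=e^{-t|\xi|^\a}$ together with the scaling relation
\begin{equation*}
G_\a(t,x)=t^{-\frac{1}{\a}}\,G_\a\!\left(1,t^{-\frac{1}{\a}}x\right),\qquad (t,x)\in\R^+\times\R,
\end{equation*}
which follows at once by taking Fourier transforms of both sides and using that $e^{-|\xi|^\a}\in L^1(\R)$, so that $G_\a(1,\cdot)$ is a genuine continuous function given by Fourier inversion. Parts (2) and (3) are then immediate. For (3), $\int_\R G_\a(t,x)\,\d x=\widehat{G_\a(t,\cdot)}(0)=e^{-t\cdot 0}=1$. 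For (2), the Fourier transform of the convolution $G_\a(t,\cdot)\ast G_\a(s,\cdot)$ is the product $e^{-t|\xi|^\a}e^{-s|\xi|^\a}=e^{-(t+s)|\xi|^\a}=\widehat{G_\a(t+s,\cdot)}(\xi)$, and injectivity of the Fourier transform yields the semigroup identity.

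For (1) I would first record two properties of $\Phi:=G_\a(1,\cdot)$. It is nonnegative, since $e^{-|\xi|^\a}$ is (for $\a\in(0,2]$) the characteristic function of a symmetric $\a$-stable law whose density is exactly $\Phi$; and it is bounded, again by Fourier inversion, $\|\Phi\|_\infty\le\frac{1}{2\pi}\int_\R e^{-|\xi|^\a}\,\d\xi<\infty$. Combining these with the scaling relation gives
\begin{equation*}
G_\a^2(t,x)=t^{-\frac{2}{\a}}\,\Phi^2\!\left(t^{-\frac{1}{\a}}x\right)\le t^{-\frac{1}{\a}}\|\Phi\|_\infty\cdot t^{-\frac{1}{\a}}\Phi\!\left(t^{-\frac{1}{\a}}x\right)=t^{-\frac{1}{\a}}\|\Phi\|_\infty\,G_\a(t,x),
\end{equation*}
which is (1) with $C=\|\Phi\|_\infty$; note that nonnegativity of $\Phi$ is essential for the middle step.

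For (4), the scaling relation together with the substitution $u=t^{-1/\a}x$ gives
\begin{equation*}
\int_\R G_\a^{\frac{1}{2}}(t,x)\,\d x=t^{-\frac{1}{2\a}}\int_\R \Phi^{\frac{1}{2}}\!\left(t^{-\frac{1}{\a}}x\right)\d x=t^{\frac{1}{2\a}}\int_\R \Phi^{\frac{1}{2}}(u)\,\d u,
\end{equation*}
so the claim reduces to the finiteness of $C:=\int_\R\Phi^{1/2}(u)\,\d u$. Near the origin there is nothing to check since $\Phi$ is bounded; at infinity I would invoke the classical tail asymptotics of the symmetric $\a$-stable density, namely $\Phi(x)\le C(1+|x|)^{-1-\a}$ for $\a\in(1,2)$ (and Gaussian, hence exponential, decay when $\a=2$), so that $\Phi^{1/2}(x)\le C(1+|x|)^{-(1+\a)/2}$, which is integrable on $\R$ precisely because $\a>1$. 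I expect this last integrability to be the only genuine point of the lemma: it is exactly where the hypothesis $\a\in(1,2]$ of Case 1 is used, since for $\a\le1$ the square root of the stable density fails to be integrable and both (4) and the bound on $\|D_{v_{N,1}}(D_{v_{N,1}}F_{N,1})\|_2$ in Theorem \ref{thm1.1} would break down.
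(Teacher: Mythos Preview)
Your argument is correct and self-contained. The paper itself does not prove this lemma: it simply states that these properties ``could be found in \cite{Ass22}'' and records them without proof, so there is no approach to compare against. Your derivation of the scaling relation, the use of Fourier inversion for (2) and (3), the bound (1) via $\|\Phi\|_\infty$, and the reduction of (4) to integrability of $\Phi^{1/2}$ together with the stable tail $\Phi(x)\le C(1+|x|)^{-1-\a}$ are all standard and valid; your remark that (4) is exactly where $\a>1$ enters is also accurate and worth keeping.
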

\setcounter{lem}{2}
\begin{lem}\label{lem6.2}
Let $K^2_{r,z,s,y}(t,x)$ be defined in (\ref{3.2}), then for any fixed $0<r<s<t$, we have
\begin{align*}
\int_{0}^{t} \int_{0}^{s} \int_{\mathbb{R}^{2}} K^2_{r,z,s,y}(t,x) \d z \d y \d r \d s\leq C_t
\end{align*}
\end{lem}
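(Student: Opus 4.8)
The plan is to exploit the fact that, in both summands of $K^2_{r,z,s,y}(t,x)$, every spatial integration is against a translate of a Green kernel, so that by Lemma~\ref{lem6.1} part (3) each such integration contributes a factor $1$ and leaves behind only an integral of the singular time factors; these will turn out to be integrable precisely because $\alpha\in(1,2]$ makes $1/\alpha<1$. All integrands are nonnegative, so Tonelli's theorem permits integrating in whatever order is convenient, and since $x$ enters only through translations the resulting bound will be independent of $x$.

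First I would split $K^2_{r,z,s,y}(t,x)$ into its two terms. For the first term, integrating in $z$ and then in $y$ with Lemma~\ref{lem6.1} part (3) reduces its contribution to
\[
\int_0^t\!\!\int_0^s (t-s)^{-1/\alpha}(s-r)^{-1/\alpha}\,\d r\,\d s
=\frac{1}{1-1/\alpha}\int_0^t (t-s)^{-1/\alpha}s^{1-1/\alpha}\,\d s ,
\]
a Beta-type integral that is finite because $1-1/\alpha>0$; hence this term is at most $C_t$.

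For the second term, integrating successively in $z$, $y$ and $w$ against $G_\alpha(\theta-r,\cdot)$, $G_\alpha(\theta-s,\cdot)$ and $G_\alpha(t-\theta,x-\cdot)$ (again Lemma~\ref{lem6.1} part (3)) reduces its contribution to
\[
\int_0^t\!\!\int_0^s\!\!\int_s^t \big[(t-\theta)(\theta-r)(\theta-s)\big]^{-1/\alpha}\,\d\theta\,\d r\,\d s .
\]
Here I would bound $\int_0^s(\theta-r)^{-1/\alpha}\,\d r\le\int_0^\theta(\theta-r)^{-1/\alpha}\,\d r=(1-1/\alpha)^{-1}\theta^{1-1/\alpha}\le C_t$ (so the $r$-integration produces a bounded factor rather than a new singularity), then exchange the order of the $s$- and $\theta$-integrations and use $\int_0^\theta(\theta-s)^{-1/\alpha}\,\d s=(1-1/\alpha)^{-1}\theta^{1-1/\alpha}$, leaving at most $C_t\int_0^t(t-\theta)^{-1/\alpha}\theta^{1-1/\alpha}\,\d\theta$, which is again a finite Beta integral since $1-1/\alpha>0$ and $2-1/\alpha>0$. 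Summing the two contributions gives the stated bound $C_t$, uniform in $x$. I do not expect a genuine obstacle; the only point to be careful about is that all the time singularities $(t-s)^{-1/\alpha}$, $(s-r)^{-1/\alpha}$, $(\theta-r)^{-1/\alpha}$, $(\theta-s)^{-1/\alpha}$, $(t-\theta)^{-1/\alpha}$ are integrable only because $\alpha>1$, and that one must resist bounding $(\theta-r)^{-1/\alpha}$ by $(\theta-s)^{-1/\alpha}$, which would create a non-integrable factor $(\theta-s)^{-2/\alpha}$ when $\alpha<2$.
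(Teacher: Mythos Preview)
Your proposal is correct and follows essentially the same route as the paper: split $K^2$ into its two summands, use Lemma~\ref{lem6.1} (3) (the paper also invokes the semigroup property (2), but your order of spatial integration avoids it) to reduce to the time integrals $\int_0^t\int_0^s[(t-s)(s-r)]^{-1/\alpha}\,\d r\,\d s$ and $\int_0^t\int_0^s\int_s^t[(t-\theta)(\theta-r)(\theta-s)]^{-1/\alpha}\,\d\theta\,\d r\,\d s$, and then check these are finite because $\alpha>1$. The paper simply writes ``$\le C_t<\infty$'' at that point, whereas you spell out the Beta-integral verification; this extra detail is fine and the argument is sound.
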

\begin{proof}
\begin{align*}
&\int_{0}^{t} \int_{0}^{s} \int_{\mathbb{R}^{2}} K^2_{r,z,s,y}(t,x) \d z \d y \d r \d s\\
\leq& \int_{0}^{t} \int_{0}^{s}\int_{\mathbb{R}^{2}}(t-s)^{-\frac{1}{\alpha}}G_{\alpha}(t-s,x-y)(s-r)^{-\frac{1}{\alpha}}G_{\alpha}(s-r,y-z)
\d z \d y \d r \d s\\
&+\int_{0}^{t} \int_{0}^{s}\int_s^t\int_{\mathbb{R}^{3}}\left[(t-\theta)(\theta-r)(\theta-s)\right]^{-\frac{1}{\alpha}}\\
&~~~~~~\times G_{\alpha}(t-\theta,x-w)
G_{\alpha}(\theta-r,w-z)
G_{\alpha}(\theta-s,w-y) \d z \d y \d w \d \theta \d r\d s\\
=&\int_{0}^{t} \int_{0}^{s} \left[(t-s)(s-r)\right]^{-\frac{1}{\alpha}}\d r\d s+\int_{0}^{t} \int_{0}^{s}\int_s^t\left[(t-\theta)(\theta-r)(\theta-s)\right]^{-\frac{1}{\alpha}}\d \theta \d r\d s\\
\leq& C_t<\infty,
\end{align*}
where we integrate $y$, $z$, $w$ in order and use Lemma \ref{lem6.1} part (2) and part(3) in the equality.
\end{proof}
\setcounter{lem}{3}
\begin{lem}\label{lem6.3}
Let $M_2{(s,N)}$ be defined in (\ref{4.21}). Then, \\
${\rm (1)}$  $M_2{(s,N)}\geq C_t$, for all $t/2<s<t$, $N\geq 1$.\\
${\rm (2)}$  $M_2{(s,N)}\leq C_t$, for all $0<s<t$, $N\geq 1$.
\end{lem}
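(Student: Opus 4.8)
The plan is to compute $M_2(s,N)$ essentially in closed form, or at least reduce it to a one‑dimensional integral that can be bounded from above and below uniformly. First I would observe that, by the semigroup property of the heat kernel (integrating the product $p_{t-s}(x_1-y)p_{t-s}(x_2-(y+y'))$ in the variable $y$ on $\R^d$), the numerator of $M_2(s,N)$ rewrites as
\begin{equation*}
\int_{\R^d}f(\d y')\int_{[0,N]^{2d}}\d x_1\,\d x_2\; p_{2(t-s)}\big(x_2-x_1-y'\big).
\end{equation*}
Then, exactly as in the derivation of (4.13), I would introduce the averaging kernels $I_N,\widetilde I_N$ from (4.12), so that $\int_{[0,N]^{2d}}\d x_1\,\d x_2\, g(x_2-x_1-y')=N^{2d}\big(I_N\ast\widetilde I_N\ast g\big)(y')$, and use Parseval together with $\widehat{I_N\ast\widetilde I_N}(z)=2^{-d}\prod_{j}\frac{1-\cos(Nz_j)}{(Nz_j)^2}$ and $\widehat{p_{2(t-s)}}(z)=e^{-(t-s)\|z\|^2}$. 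After the change of variables $z\mapsto z/N$ this gives
\begin{equation*}
M_2(s,N)=\frac{N^{d}}{(2\pi)^d\sigma_{N,2}^2}\int_{\R^d}\Big(\prod_{j=1}^d\frac{1-\cos z_j}{z_j^2}\Big)\,e^{-\frac{(t-s)\|z\|^2}{N^2}}\,\widehat f(\d z/N),
\end{equation*}
or, keeping the spatial side, $M_2(s,N)=\frac{N^{2d}}{\sigma_{N,2}^2}\int_{\R^d}(I_N\ast\widetilde I_N)(x)\,(p_{2(t-s)}\ast f)(x)\,\d x$; whichever form is cleaner for the two bounds.

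For part (2), the upper bound, I would bound $\prod_j\frac{1-\cos z_j}{z_j^2}\le 2^{-d}\prod_j(1\wedge z_j^{-2})$, which is integrable, drop the exponential factor (it is $\le 1$), and use that $\widehat f$ is a tempered measure with $\widehat f(\R^d)<\infty$ in Case 2 — so the integral is at most a finite constant times $\widehat f(\R^d)$ — together with Lemma 2.5 part (3), which gives $\sigma_{N,2}^2\ge c_t N^d$ for $N$ large (and positivity of $\sigma_{N,2}^2$ handles small $N$). This yields $M_2(s,N)\le C_t$ uniformly in $0<s<t$, $N\ge1$. For part (1), the lower bound, I would restrict the $z$‑integral to a fixed small box away from the origin, say $z\in[1,2]^d$ — on which $\prod_j\frac{1-\cos z_j}{z_j^2}$ is bounded below by a positive constant and $e^{-(t-s)\|z\|^2/N^2}\ge e^{-Ct/N^2}\ge c>0$ for all $t/2<s<t$, $N\ge1$ — and note that $\widehat f$ restricted to $[1,2]^d$ has strictly positive mass because $f$ (hence $\widehat f$) is a non‑zero nonnegative measure; combined with the upper bound $\sigma_{N,2}^2\le C_t N^d$ from Lemma 2.5 part (3), this gives $M_2(s,N)\ge C_t$. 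Alternatively one can argue the lower bound directly on the spatial side from $(p_{2(t-s)}\ast f)(x)\ge c_t>0$ on $[-1,1]^d$ and $(I_N\ast\widetilde I_N)\ge c N^{-d}$ there.

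The only genuinely delicate point is handling the Fourier measure $\widehat f$ rather than a function: I must be careful that $\widehat f([1,2]^d)>0$, which follows since $f\not\equiv0$ forces its Fourier transform to have positive mass on every open set of positive measure (and if $\widehat f$ happened to vanish on $[1,2]^d$ one simply shifts to another fixed box on which it is positive — the constants then depend only on that choice). Everything else is the same Parseval/averaging computation already used in (4.13)–(4.14) and (4.17), so I expect the proof to be short. A clean way to present it is: state the two Parseval identities, then prove (2) and (1) in that order, each in two lines, invoking Lemma 2.5 part (3) for the matching two‑sided control of $\sigma_{N,2}^2$.
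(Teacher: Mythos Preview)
Your Parseval reduction and overall strategy coincide with the paper's. However, there is a genuine gap in your part~(2), and it stems from a misconception about $\hat f$ in Case~2.

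Because $f(\R^d)<\infty$, the Fourier transform $\hat f$ is not merely a measure: it is a bounded continuous nonnegative function with $\hat f(z)\le \hat f(0)=f(\R^d)$ for every $z$. The paper uses exactly this. In your upper bound you propose to bound the Fej\'er--type kernel by its supremum and then invoke $\hat f(\R^d)<\infty$; but after the change of variables the factor is $\hat f(z/N)\,\d z$, whose total mass over $\R^d$ equals $N^{d}\hat f(\R^d)$, not $\hat f(\R^d)$. Your argument therefore yields only $M_2(s,N)\le C_t N^{d}$, which is useless. The correct pairing---and what the paper does---is the opposite one: bound $\hat f(z/N)\le \hat f(0)=f(\R^d)$ pointwise and use the Lebesgue integrability of $\prod_j\frac{1-\cos z_j}{z_j^{2}}$ (the integral equals $\pi^{d}$). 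This gives $M_2(s,N)\le C f(\R^d)\,N^{d}/\sigma_{N,2}^{2}\le C_t$ by Lemma~2.5(3).

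For part~(1) the box argument is fine in spirit, but your justification ``$\hat f$ has positive mass on every open set'' is false for a general nonnegative measure. The paper uses continuity of $\hat f$ together with $\hat f(0)=f(\R^d)>0$ to pick $0<a<b<1$ with $\inf_{[a,b]^d}\hat f>0$; your fallback (``shift the box'') works for the same reason once you note that $\hat f$ is continuous. Your alternative spatial--side lower bound via $(p_{2(t-s)}\ast f)(x)\ge c_t$ on $[-1,1]^d$ is not uniform over $t/2<s<t$: as $s\uparrow t$ the heat kernel degenerates to $\de_0$, and this convolution can be arbitrarily small on $[-1,1]^d$ if $f$ puts its mass away from that cube.
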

\begin{proof} Recall the definition of $I_{N},~{\widetilde{I}}_{N}$ in (\ref{4.12}),
\begin{align*}
M_2(s,N)&=\frac{1}{\sigma^2_{N,2}}\int_{\mathbb{R}^2}\d y f(\d y')\int_{[0,N]^{2d}}\d x_1 \d x_2p_{t-s}(x_1-y)p_{t-s}(x_2-(y+y'))\\
&=\frac{N^{2d}}{\pi^d\sigma^2_{N,2}}\int_{\R^d} \d x \left(I_{N}\ast {\widetilde{I}}_{N}\right)(x)\left(p_{2(t-s)}\ast f\right)(x)\\
&=\frac{N^{2d}}{\pi^d\sigma^2_{N,2}}\int_{\R^d} \prod_{j=1}^{d}\frac{1-\cos (Nz_j)}{(Nz_j)^2}e^{-(t-s)\|z\|^2}\hat{f}(\d z)\\
&=\frac{N^{d}}{\pi^d\sigma^2_{N,2}}\int_{\R^d} \prod_{j=1}^{d}\frac{1-\cos z_j}{z_j^2}e^{-\frac{(t-s)\|z\|^2}{N^2}}\hat{f}(\d z).
\end{align*}
(1). $f(\R^d)<\infty$ implies that $\hat{f}$ is a bounded and continuous function, then for $t/2<s<t$, choose $0<a<b<1$ such that $\inf_{z\in[a,b]}\hat{f}(z)>0$, we have
\begin{align*}
M_2(s,N)\geq\frac{N^{d}}{\pi^d\sigma^2_{N,2}}\int_{[a,b]^d} \prod_{j=1}^{d}\frac{1-\cos z}{z^2}e^{-\frac{(t-s)\|z\|^2}{N^2}}\hat{f}(z)\d z\geq C_t,
\end{align*}
where we use Lemma \ref{lem2.5} part (3) in the last inequality.\\
(2). Notice that $\hat{f}(x)\leq\hat{f}(0)=f(\R^d)$, then for all $0<s<t$,
$$
M_2(s,N)\leq\frac{N^{d}\pi^df(\R^d)}{\pi^d\sigma^2_{N,2}}\leq C_t.
$$
\end{proof}
\setcounter{lem}{4}
\begin{lem}\label{lem6.4}
Let $M_3{(s,N)}$ be defined in (\ref{4.27}). Then, \\
${\rm (1)}$  $M_3{(s,N)}\geq \frac{C_t}{\log N}$, for all $t/2<s<t$, $N\geq e$.\\
${\rm (2)}$  $M_3{(s,N)}\leq \frac{C_t}{s\log N}$, for all $0<s<t$, $N\geq e$.
\end{lem}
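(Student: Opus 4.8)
The plan is to bring $M_3(s,N)$ into the form of a single one-dimensional Fourier integral, in complete analogy with the computation carried out in the proof of Lemma~\ref{lem6.3}. First I would integrate out $y$: since $p_a$ is even, the semigroup property gives, with $a:=s(t-s)/t$,
\[
\int_{\R}p_{a}\!\left(y-\tfrac{s}{t}x_1\right)p_{a}\!\left(y+y'-\tfrac{s}{t}x_2\right)\d y=p_{2a}\!\left(\tfrac{s}{t}(x_2-x_1)-y'\right).
\]
The remaining integrand depends on $x_1,x_2$ only through $x_2-x_1$, so integrating over $[0,N]^2$ produces the kernel $I_N\ast\widetilde{I}_N$ of (\ref{4.12}):
\[
\int_{[0,N]^2}p_{2a}\!\left(\tfrac{s}{t}(x_2-x_1)-y'\right)\d x_1\,\d x_2=N^2\int_{\R}(I_N\ast\widetilde{I}_N)(u)\,p_{2a}\!\left(\tfrac{s}{t}u-y'\right)\d u.
\]
Then I would use the scaling identity (\ref{4.30}) to pull the factor $s/t$ out of the Gaussian, $p_{2a}(\tfrac{s}{t}u-y')=\tfrac{t}{s}\,p_{b}(u-\tfrac{t}{s}y')$ with $b:=2a(t/s)^2=2t(t-s)/s$, interchange the $y'$- and $u$-integrals, and apply Parseval's identity to $\int_{\R}(I_N\ast\widetilde{I}_N)(u)\big[\int_{\R}p_b(u-\tfrac{t}{s}y')f(\d y')\big]\d u$, using $\widehat{I_N\ast\widetilde{I}_N}(z)=2(1-\cos Nz)/(N^2z^2)$, $\widehat{p_b}(z)=e^{-bz^2/2}$, and the fact that $\hat f$ is a bounded continuous function with $\hat f\le\hat f(0)=f(\R)$ (recall $f(\R)<\infty$, exactly as in Lemma~\ref{lem6.3}). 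After the substitution $w=Nz$ this gives the representation
\[
M_3(s,N)=\frac{tN}{\pi s\,\sigma^2_{N,3}}\int_{\R}\frac{1-\cos w}{w^2}\,\exp\!\left(-\frac{t(t-s)w^2}{sN^2}\right)\hat f\!\left(\frac{tw}{sN}\right)\d w .
\]

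From this identity both bounds are short. For part (2) I would bound $\hat f(\tfrac{tw}{sN})\le f(\R)$, drop the exponential, and use $\int_{\R}(1-\cos w)w^{-2}\,\d w=\pi$ to obtain $M_3(s,N)\le tf(\R)N/(s\,\sigma^2_{N,3})$; then Lemma~\ref{lem2.5} part (4) yields $\sigma^2_{N,3}\ge c_t N\log N$ for every $N\ge e$ --- the ratio $\sigma^2_{N,3}/(N\log N)$ is continuous and strictly positive on $[e,\infty)$ and converges to $tf(\R)>0$ --- whence $M_3(s,N)\le C_t/(s\log N)$. For part (1), with $t/2<s<t$ (so $1<t/s<2$) I would restrict the $w$-integral to $[1,2]$: there $\tfrac{tw}{sN}\le 4/N$, so $\hat f(\tfrac{tw}{sN})\ge f(\R)/2$ for $N$ large by continuity of $\hat f$ at the origin together with $\hat f(0)=f(\R)>0$, while $t(t-s)w^2/(sN^2)\le 8t/N^2\le 8t/e^2$ keeps the exponential bounded below by a positive constant depending only on $t$. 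Combined with $t/s>1$ and the companion bound $\sigma^2_{N,3}\le C_t N\log N$ (again Lemma~\ref{lem2.5} part (4); note $\log N\ge1$ on $[e,\infty)$), this gives $M_3(s,N)\ge C_t/\log N$ for all sufficiently large $N$; the remaining compact range $N\in[e,N_0]$ is handled by noting that $(s,N)\mapsto M_3(s,N)\log N$ is continuous and strictly positive there (the Fourier representation above extends continuously to $s=t$), hence bounded below.

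I expect the main difficulty to be bookkeeping rather than any single estimate: carefully tracking the successive rescalings of the Gaussian kernels and the constant produced by Parseval, and --- more substantively --- upgrading the purely asymptotic Lemma~\ref{lem2.5} part (4) to the two-sided bound $c_t N\log N\le\sigma^2_{N,3}\le C_t N\log N$ valid for all $N\ge e$, not merely for $N$ large. This is precisely the source of the $\log N$ in both parts: the Fourier integral itself contributes a factor of order $N$, and it is the $N\log N$ growth of the variance that converts $M_3(s,N)\asymp tN/(s\,\sigma^2_{N,3})$ into the claimed order $(\log N)^{-1}$. A minor point to keep in mind is that the coefficient $t(t-s)/s$ in the exponent degenerates as $s\to t$, but this only drives the exponential towards $1$ and causes no trouble in either direction; the small-time singularity of the heat kernels that would otherwise be delicate in the $x$-integrals has already been absorbed into the $I_N\ast\widetilde{I}_N$ convolution before any estimate is applied.
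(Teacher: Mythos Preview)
Your proposal is correct and follows essentially the same route as the paper: integrate out $y$ via the semigroup property, rewrite the $x_1,x_2$-integral through $I_N\ast\widetilde I_N$, apply Parseval to obtain the one-dimensional Fourier representation, and then argue exactly as in Lemma~\ref{lem6.3} using continuity and boundedness of $\hat f$ together with Lemma~\ref{lem2.5}(4). Your write-up is in fact more careful than the paper's (which merely says ``by a similar argument in proving Lemma~\ref{lem6.3}''): you make explicit the upgrade of the asymptotic $\sigma^2_{N,3}\sim tf(\R)N\log N$ to two-sided bounds on $[e,\infty)$ and the compactness argument for small $N$, and your formula $\hat f(tw/(sN))$ corrects a typo in the paper's displayed $\hat f(tz/s)$.
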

\begin{proof}
According to the proof of Lemma {\ref{lem6.3}}, we have
\begin{align*}
M_3(s,N)
&=\frac{N^{2}}{\sigma^2_{N,3}}\int_{\R} \d x \left(I_{N}\ast {\widetilde{I}}_{N}\right)(x)\left(p_{2s(t-s)/t}\ast f\right)\left(\frac{s}{t}x\right)\\
&=\frac{N^{2}}{\pi\sigma^2_{N,3}}\int_{\R} \frac{1-\cos (Nzs/t)}{(Nzs/t)^2}e^{-s(t-s)\|z\|^2}\hat{f}(\d z)\\
&=\frac{Nt}{s\pi\sigma^2_{N,2}}\int_{\R} \frac{1-\cos z}{z^2}e^{-\frac{t(t-s)\|z\|^2}{sN^2}}\hat{f}\left(\frac{tz}{s}\right)\d z.
\end{align*}
Then thanks to Lemma {\ref{lem2.5}} part (4), by a similar argument in proving Lemma {\ref{lem6.3}}, we prove the result.
\end{proof}
\setcounter{lem}{5}
\begin{lem}\label{lem6.5}$$\int_{\R}\frac{1-\cos z}{z^2}\left(\int_{0}^{\infty}e^{-\theta}
\left(\log\left(1+\frac{N^2\theta}{tz^2}\right)\right)^2\d \theta\right)\d z\leq C_t(\log N)^2.$$
\end{lem}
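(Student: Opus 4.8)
The plan is to bound the integral by splitting the outer $z$-integral at $|z|=1$ and handling the logarithmic factor via the elementary inequality $\log(1+ab)\le \log(1+a)+\log(1+b)$ for $a,b\ge0$, applied with $a=N^2/(tz^2)$ and $b=\theta$. This separates the $\theta$-dependence: $\left(\log(1+\tfrac{N^2\theta}{tz^2})\right)^2\le 2\left(\log(1+\tfrac{N^2}{tz^2})\right)^2+2(\log(1+\theta))^2$, and since $\int_0^\infty e^{-\theta}(\log(1+\theta))^2\,\d\theta$ is a finite absolute constant, the inner integral is bounded by $C_t\big(1+(\log(1+\tfrac{N^2}{tz^2}))^2\big)$.

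It then remains to show $\int_{\R}\frac{1-\cos z}{z^2}\big(1+(\log(1+\tfrac{N^2}{tz^2}))^2\big)\,\d z\le C_t(\log N)^2$. First I would dispatch the region $|z|\ge1$: there $\log(1+\tfrac{N^2}{tz^2})\le \log(1+\tfrac{N^2}{t})\le C_t\log N$ for $N\ge e$, and $\int_{|z|\ge1}\frac{1-\cos z}{z^2}\,\d z\le\int_{|z|\ge1}\frac{2}{z^2}\,\d z<\infty$, so this part contributes at most $C_t(\log N)^2$. For the region $|z|\le1$, I would use $\frac{1-\cos z}{z^2}\le\frac12$ (since $1-\cos z\le z^2/2$), reducing matters to $\int_{-1}^{1}\big(\log(1+\tfrac{N^2}{tz^2})\big)^2\,\d z$. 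Substituting $z=\tfrac{N}{\sqrt t}w$ (equivalently bounding $\log(1+\tfrac{N^2}{tz^2})\le 2\log N + \log(1+\tfrac{1}{tz^2})$ for $N\ge e$ and expanding the square) leaves a term $C(\log N)^2\int_{-1}^1\d z$ plus $\int_{-1}^1\big(\log(1+\tfrac1{tz^2})\big)^2\,\d z$, and the latter integral is finite because near $z=0$ the integrand grows only like $(\log|z|)^2$, which is integrable.

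Collecting the two regions gives the claimed bound $C_t(\log N)^2$. The only mildly delicate point is making sure the constants' dependence on $t$ is tracked correctly through the substitutions, and confirming the integrability of $(\log(1+1/(tz^2)))^2$ near the origin; both are routine. I do not expect any genuine obstacle here — this is a clean computation once the $\log(1+ab)$ splitting is in place.
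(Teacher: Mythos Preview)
Your proposal is correct and follows essentially the same route as the paper: both bound $\log\bigl(1+\tfrac{N^2\theta}{tz^2}\bigr)$ by separating the $N$, $\theta$, and $z$ contributions via elementary logarithm inequalities, and then observe that the remaining $N$-free integral is finite. The paper does this in a single step via $1+\tfrac{N^2\theta}{tz^2}\le N^2(\theta+1)\bigl(\tfrac1t+1\bigr)\bigl(\tfrac1{z^2}+1\bigr)$ and asserts finiteness of the resulting double integral directly, whereas you separate $\theta$ first and then split the $z$-integral at $|z|=1$ --- a minor presentational difference only.
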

\begin{proof}
Since
\begin{align}
1+\frac{N^2\theta}{tz^2}\leq N^2+N^2\theta\cdot\frac{1}{t}\cdot\frac{1}{z^2}\leq N^2(\theta+1)\left(\frac{1}{t}+1\right)\left(\frac{1}{z^2}+1\right),\notag
\end{align}
then, we have
$$\log\left(1+\frac{N^2\theta}{tz^2}\right)\leq\left(2\log N+\log \left(\frac{1}{t}+1\right)\right)\cdot\left(1+\log(\theta+1)
+\log\left(\frac{1}{z^2}+1\right)\right).$$
Notice that
\begin{align}
\int_{\R}\frac{1-\cos z}{z^2}\left(\int_{0}^{\infty}e^{-\theta}
\left(1+\log(\theta+1)
+\log\left(\frac{1}{z^2}+1\right)\right)^2\d \theta\right)\d z<\infty,\notag
\end{align}
we finish the proof.
\end{proof}

\end{document}